  \def\ol{\overline}
    \def\la{\lambda}
\numberwithin{equation}{section}
\newtheorem{thm}{Theorem}[section]
\newtheorem{theorem}[thm]{Theorem}
\newtheorem{cor}[thm]{Corollary}
\newtheorem{lemma}[thm]{Lemma}
\newtheorem{lem}[thm]{Lemma}
\newtheorem{prop}[thm]{Proposition}
\newtheorem{proposition}[thm]{Proposition}
\theoremstyle{definition}
\newtheorem{conj}         [thm]{Conjecture}
\newtheorem{remark}[thm]{Remark}
\newenvironment{rem}
  {\pushQED{\qed}\remx}
  {\popQED\endremx}
\newenvironment{example}
  {\pushQED{\qed}\examplex}
  {\popQED\endexamplex}
  \newcommand{\vertiii}[1]{{\left\vert\kern-0.25ex\left\vert\kern-0.25ex\left\vert #1 
    \right\vert\kern-0.25ex\right\vert\kern-0.25ex\right\vert}}
\def\pt{\partial}
\def\mco{\textrm{\rm co$^{\rm mat}$}}
\def\cmco{\overline{\textrm{\rm co}}^{\rm mat}}
\def\bTV{ bent TV screen}
\def\ben{\begin{enumerate}}
\def\een{\end{enumerate}}
\def\blem{\begin{lemma}}
\def\elem{\end{lemma}}
\def\bs{\bigskip}
\def\bem{\begin{pmatrix}}
\def\eem{\end{pmatrix}}
\def\beq{\begin{equation}}
\def\eeq{\end{equation}}
\renewcommand{\subset}{\subseteq}
\renewcommand{\supset}{\supseteq}
\renewcommand{\emptyset}{\varnothing}
\newcommand{\ax}{\langle x\rangle}
\DeclareMathOperator{\Span}{span}
\DeclareMathOperator{\thull}{thull}
\DeclareMathOperator{\cthull}{cthull}
\DeclareMathOperator{\proj}{proj}
\DeclareMathOperator{\inter}{int}
\DeclareMathOperator{\tr}{tr}
\newcommand{\df}[1]{{\bf{#1}}{\index{#1}}}
\def\cC{\mathcal C}
\def\cD{\mathcal D}
\def\RR{\mathbb R}
\def\ga{\gamma}
\def\epsilon{\varepsilon}
\def\eps{\epsilon}
\def\bep{\proof}
\def\eep{\qed}
\def\bec{\begin{conj}}
\def\eec{\end{conj}}
\def\bex{\begin{example}}
\def\eex{\end{example}}
\def\smatmg{\mathbb S_m^g}
\def\smatg{\mathbb S^g}
\def\smath{\mathbb S^h}
\def\smatn{\mathbb S_n}
\def\smatng{\mathbb S_n^g}
\def\smatnh{\mathbb S_n^h}
\def\smatmg{\mathbb S_m^g}
\def\smatdg{\mathbb S_d^g}
\def\cS{\mathcal S}
\def\cN{\mathcal N}
\def\RR{ {\mathbb{R}} }
\def\R{ {\mathbb{R}} }
\def\C{ {\mathbb{C}} }
\def\N{ {\mathbb{N}} }
\def\La{\Lambda}
\def\pt{\partial}
\def\la{\lambda}
\def\pt{\partial}
\def\epsilon{\varepsilon}
\def\eps{\epsilon}
\def\cL{\mathcal L}
\def\Om{\Omega}
\def\mL{\mathfrak{L}}
\def\La{\Lambda}
\def\mbS{\mathbb S}
\newcommand\meaningbody[1]{%
  {\ttfamily
    \expandafter\strip@prefix\meaning#1}%
}
\newcommand*{\mydef}[2]{%
    \def#1{#2}%
    {\ttfamily\string#1=\expandafter\strip@prefix\meaning#1}%
}
\def\thull{\mbox{\rm thull}}
\def\cthull{\mbox{\rm cthull}}
\def\cthull{\mbox{\rm cthull}}
\def\tpd{\triangleright}
\def\cT{{\mathcal T}}
\def\fH{{\mathfrak H}}
\def\cY{\mathcal Y}
\def\cTn{\mathcal T_n}
\def\hh{T}
\def\hhs{\mathfrak{T}}
\def\fT{\mathfrak{T}}
\def\hK{\hat{\calK}}
\def\thull{\mbox{\rm thull}}
\def\cthull{\mbox{\rm cthull}}
\def\tpd{\triangleright}
\def\cS{\mathcal S}
\def\cU{\mathcal U}
\def\idem{in situ }
\def\Idem{In Situ }
\def\contra{ex situ }
\def\Contra{Ex Situ }
\def\fHT{\fH^{\mbox{\tiny\rm opp}}}
\title[Projections of LMI Domains and the Tracial Hahn-Banach Theorem]
{The Tracial Hahn-Banach Theorem, Polar Duals,\\[.1cm] Matrix Convex Sets, and\\[.1cm] Projections of Free Spectrahedra}
\author[J.W. Helton]{J. William Helton${}^1$}
\address{J. William Helton, Department of Mathematics\\
  University of California \\
  San Diego}
\email{helton@math.ucsd.edu}
\thanks{${}^1$Research supported by the National Science Foundation (NSF) grant
DMS 1201498, and the Ford Motor Co.}
\author[I. Klep]{Igor Klep${}^{2}$}
\address{Igor Klep, Department of Mathematics, 
The University of Auckland, New Zealand}
\email{igor.klep@auckland.ac.nz}
\thanks{${}^2$Supported by the Marsden Fund Council of the Royal Society of New Zealand. Partially supported by the Slovenian Research Agency grants P1-0222, L1-4292 and L1-6722. Part of this research was done while the author was on leave from the University of Maribor.}
\author[S. McCullough]{Scott McCullough${}^3$}
\address{Scott McCullough, Department of Mathematics\\
  University of Florida\\ Gainesville 
   }
   \email{sam@math.ufl.edu}
\thanks{${}^3$Research supported by the NSF grants DMS 1101137 and 1361501}
\subjclass[2010]{Primary 14P10, 47L25, 90C22; Secondary 13J30, 46L07}
\date{\today}
\keywords{linear matrix inequality (LMI), polar dual, LMI domain, spectrahedron,
spectrahedrop, convex hull, free real algebraic geometry, noncommutative polynomial, cp interpolation, quantum channel, tracial hull, tracial Hahn-Banach Theorem}
\begin{document}

\linespread{1.1}

\begin{abstract}
This article investigates matrix convex sets and introduces their tracial analogs which we call contractively tracial convex sets. In both contexts completely positive (cp) maps play a central role:  unital cp  maps in the case of matrix convex sets and  trace preserving  cp (CPTP) maps in the case of  contractively tracial convex sets.  CPTP maps, also known as  quantum channels,  are fundamental objects in  quantum information theory.

Free convexity is   intimately connected with Linear Matrix Inequalities (LMIs)
 $L(x)=A_0+A_1x_1+\cdots+A_gx_g \succeq0$
and  their matrix convex solution sets $\{ X: \ L(X) \succeq 0 \},$ called \emph{free spectrahedra}. 
The Effros-Winkler Hahn-Banach Separation Theorem for matrix convex sets states that matrix convex sets are solution sets 
of LMIs with operator coefficients.  Motivated in part by cp interpolation problems, we develop the foundations of convex analysis and duality
in the tracial setting, including tracial analogs of the Effros-Winkler Theorem. 

The projection of a free spectrahedron in $ g+h$ variables to $g$ variables 
is a matrix convex set called a \emph{free spectrahedrop}.  As a class, free spectrahedrops
are more general than free spectrahedra, but at the same time more tractable than
general matrix convex sets. Moreover, many matrix convex sets can be approximated from above
by free spectrahedrops.  Here a number of fundamental results for spectrahedrops  and their polar duals are 
established.  For example,
the free polar dual of a free spectrahedrop is again a free spectrahedrop.
We also give a Positivstellensatz for free polynomials that are positive on a free spectrahedrop.
\end{abstract}

\maketitle

\setcounter{tocdepth}{3}
\contentsmargin{2.55em} 
\dottedcontents{section}[3.8em]{}{2.3em}{.4pc} 
\dottedcontents{subsection}[6.1em]{}{3.2em}{.4pc}
\dottedcontents{subsubsection}[8.4em]{}{4.1em}{.4pc}

\linespread{1.18}

\section{Introduction}
This article investigates matrix convex sets from the perspective  of the emerging areas of free real algebraic
geometry and free analysis \cite{Voi04,Voi10,KVV+,MS11,Pope10,AM+,BB07,dOHMP09,HKMfrg,PNA10}.  It also introduces  
 \df{contractively tracial convex sets},  the tracial analogs  of matrix convex sets appropriate
  for the  quantum channel and quantum operation interpolation problems.
   Matrix convex sets arise naturally in a number of contexts, including 
engineering systems theory, 
 operator spaces, systems and algebras and  
 are inextricably linked to unital completely positive (ucp) maps
 \cite{SIG97,Arv72,Pau02,Far00,HMPV09}. On the other hand, 
 completely positive trace preserving  (CPTP) maps
  are central to quantum information theory \cite{NC10,JKPP11}. 
 Hence there is an inherent similarity between matrix convex sets
and structures naturally occurring in  quantum information theory.

  Given positive integers $g$ and $n$, let  \df{$\smatng$}  
 denote the set of $g$-tuples $X=(X_1,\dots,X_g)$ of complex $n\times n$ hermitian matrices
  and let \df{$\smatg$} denote the sequence $(\smatng)_n$. 
We use $M_n$ to denote the algebra of $n\times n$ complex matrices.
  A subset $\Gamma\subset \smatg$ is a sequence
  $\Gamma=(\Gamma(n))_n$ such that $\Gamma(n)\subset \smatng$ for each $n$.
  A \df{matrix convex set} is a subset $\Gamma\subset\smatg$ 
  that is \df{closed with respect to direct sums} and (simultaneous) {\bf conjugation by isometries}.
  \index{closed with respect to conjugation by isometries} 
  Closed under direct sums
  means  if $X\in \Gamma(n)$ and 
  $Y\in \Gamma(m)$, then
\begin{equation}
 \label{eq:dirsums}
 X\oplus Y:= \big( \begin{pmatrix} X_1 & 0 \\ 0 & Y_1\end{pmatrix}, \dots, \begin{pmatrix} X_g & 0 \\ 0 & Y_g \end{pmatrix} \big) \in \Gamma(n+m).
\end{equation}
  Likewise, closed under conjugation by isometries means if $X\in\Gamma(n)$ and $V$ is an $n\times m$ isometry, then
\[
 V^* X V := (V^* X_1 V, \dots, V^* X_g V)\in\Gamma(m).
\]
The simplest examples of matrix convex sets arise as solution sets of linear matrix inequalities (LMIs). 
   The use of LMIs is a major advance in systems engineering in the past two decades \cite{SIG97}. Furthermore, 
   LMIs underlie the theory of semidefinite programming \cite{BPR13,BN02}, itself a recent major innovation in convex optimization \cite{Ne06}.

  Matrix convex sets determined by LMIs are based on a free analog of an affine linear functional,
 often called a \df{linear pencil}. \index{$L_A$}  Given a positive integer $d$ and $g$ hermitian $d\times d$ matrices  $A_j,$ let
\begin{equation}
 \label{eq:LMIx}
 L(x) = A_0 + \sum_{j=1}^g  A_j x_j.
\end{equation}
This linear pencil is often denoted by $L_A$ to emphasize the dependence on $A$.
  In the case that $A_0=I_d$, we call $L$ {\bf monic}.  \index{monic linear pencil}
  Replacing $x\in\mathbb R^g$ with a tuple $X=(X_1,\dots,X_g)$ of $n\times n$ hermitian matrices
 and letting $W\otimes Z$ denote the Kronecker product
  of matrices leads to  the evaluation of the 
 free affine linear functional,
\begin{equation}
 \label{eq:LMI}
 L(X) = A_0 \otimes I_n +\sum A_j\otimes X_j.
\end{equation}
 The inequality $L(X)\succeq 0$
 is a \df{free linear matrix inequality (free LMI)}. The solution set $\Gamma$ of 
 this free LMI is the sequence of sets
\[
 \Gamma(n) =\{X\in\smatng : L(X)\succeq 0\}
\]
  and is known as a \df{free spectrahedron} (or \df{free LMI domain}). 
 It is easy to see that  $\Gamma$ is  a matrix convex set.

 By the Effros-Winkler matricial Hahn-Banach Separation Theorem \cite{EW97}, (up to a technical hypothesis)
 every matrix convex set is the solution set of $L(X)\succeq 0,$ as in equation \eqref{eq:LMI},
  of some monic linear pencil {\it provided}
  the $A_j$ are allowed to be hermitian operators on a (common) Hilbert space. More precisely, every matrix
 convex set is a (perhaps infinite) intersection of free spectrahedra.  Thus, being a spectrahedron 
  imposes a strict finiteness condition on a matrix convex set.  

   In between (closed) matrix convex sets and spectrahedra lie the class of domains we call \df{spectrahedrops}.
Namely coordinate projections   of free spectrahedra. A subset  $\Delta \subset \smatg$ is a \df{free spectrahedrop} if there exists a pencil
\[
  L(x,y) = A_0 + \sum_{j=1}^g  A_j x_j  + \sum_{k=1}^h  B_k y_k,
\]
 in $g+h$ variables such that  
\beq\label{eq:deltaDrop}
 \Delta(n) =\{X\in\smatng: \exists Y \in\smatnh  \mbox{ such that } L(X,Y)\succeq 0\}.
\eeq

  In applications, presented with a convex set, one  would like, 
  for optimization purposes say, to know if it is a spectrahedron or 
  a spectrahedrop.  Alternately,  presented with an algebraically defined set $\Gamma \subset \smatg$ that is not
  necessarily convex,   it is natural to consider the relaxation obtained by replacing $\Gamma$ 
  with its matrix convex hull or an approximation thereof. Thus, it is of interest to know when
  the convex hull of a set is a spectrahedron or perhaps a spectrahedrop. 
  An approach to these problems via approximating from above by spectrahedrops was 
  pursued in the article \cite{Lasse}. Here we develop the duality approach. Typically
  the second polar dual of a set is its closed matrix convex hull.

\subsection{Results on Polar Duals and Free Spectrahedrops}
\label{sec:polardualresults}
 We list here our main results on free spectrahedrops and polar duals.  For the reader 
  unfamiliar with the terminology, the 
  definitions not already introduced can be found in Section \ref{sec:2}
  with the exception of {\it free polar  dual} whose definition 
  appears in Subsection \ref{sec:free polar dual basics}.

\ben
\item
A perfect  free Positivstellensatz (Theorem \ref{thm:posDrop}) for any symmetric free polynomial $p$ on a free spectrahedrop $\Delta$
as in \eqref{eq:deltaDrop}.
It says that $p(X)$ is positive semidefinite for all $X \in \Delta$
if and only if $p$ has the form
$$ p(x)=  f(x)^* f(x)  + \sum_\ell q_\ell(x)^* L(x,y) q_\ell (x)
$$
where $f$ and and $q_\ell$ are vectors with polynomial entries.
If the degree of $p$ is less than or equal to $2r +1$,
then  $f$ and $q_\ell$  have degree no greater than $r$;
\item
The free polar dual of a free spectrahedrop is a free spectrahedrop (Theorem \ref{thm:polarPolar} 
 and Corollary \ref{cor:dual of drop is drop});
\item
The matrix convex hull of a union of finitely many bounded free spectrahedrops is a
bounded free spectrahedrop (Proposition \ref{prop:hull of union});
\item
 A matrix convex set is, in a canonical sense, generated
 by a finite set (equivalently a single point) if and only if 
 it is the polar dual of a free spectrahedron (Theorem \ref{thm:polarLMI}).
\een

\subsection{Results on Interpolation of cp Maps and Quantum Channels}

A completely positive ({\bf cp}) map $M_n\to M_m$ that  is trace preserving is
called a {\bf quantum channel}, and a cp map
that  is trace non-increasing for positive semidefinite
arguments is a {\bf quantum operation}.
These maps figure prominently  in quantum information theory
 \cite{NC10}. 

The cp interpolation problem is formulated as follows.
Given $A \in \mbS_n^g$ and  $B\in\mbS_m^g$,
does  there exists a cp map $\Phi:M_n\to M_m$ such that, for $1\le \ell\le g$,
\[
 \Phi(A_\ell) = B_\ell ?
\]
One can require further  that $\Phi$ be unital, a quantum channel or a quantum operation.
Imposing either of the latter two constraints pertains to
quantum information theory  \cite{Hardy, Klesse, NCSB}, where one is interested
in quantum channels (resp., quantum operations) 
that send a prescribed  set of
quantum states into another set of quantum states.

\subsubsection{Algorithmic Aspects}
A  byproduct of the methods used in this paper
and in \cite{HKM+} produces 
 solutions to these cp  interpolation problems in the form of an  algorithm,
 Theorem \ref{thm:interp} in Subsection
 \ref{subsec:interp}.  Ambrozie and Gheondea \cite{AGprept} 
solved these interpolation problems with LMI algorithms.
While  equivalent to theirs, 
  our solutions are formulated as concrete LMIs that can be solved with a standard semidefinite programming (SDP) solver. \index{SDP}
These interpolation results are a basis for proofs of results outlined in Section \ref{sec:polardualresults}. 

\subsection{Free Tracial Hahn-Banach Theorem}
Matrix convex sets are 
 closely connected with ranges of unital cp maps.
  Indeed, given a tuple $A\in\smatmg$, the matrix convex hull of  
   the set $\{A\}$ is the sequence of sets
\[
  \big( \{B\in \smatng: B_j = \Phi(A_j) \mbox{ for some ucp map } \Phi:M_m\to M_n \} \big)_n.
\]
 From the point of view of 
  quantum information theory it is natural to consider
hulls of ranges of quantum operations. 
We say that $\cY\subseteq \mbS^g$ is \df{contractively tracial} if for all positive integers $m,n$, elements 
 $Y\in \cY(m)$, and  finite collections  $\{C_\ell\}$ of $n\times m$ matrices such that
\[
 \sum C_\ell^\ast C_\ell \preceq I_m, 
\]
 it follows that $\sum C_j Y C_j^\ast \in \cY(n)$.  
 It is clear that an intersections of contractively tracial sets 
  is again contractively tracial, giving rise, in the usual way,  to 
  the notion of the  {\bf contractive tracial hull}, denoted \df{cthull}.
 For a tuple $ A$, 
\[
 \cthull(A)= \{B:  \Phi(A)=B \ \ \mbox{for some quantum operation } \Phi \}.
\]
  While the unital and quantum interpolation
  problems have very similar formulations, contractive  tracial hulls possess far
  less structure than matrix convex hulls. A subset $\mathscr Y\subset \mbS^g$ is \df{levelwise convex} if each $\mathscr Y(m)$ is convex (as a subset of $\smatmg$). (Generally \df{levelwise} refers to a property holding for each $\mathscr Y(m)\subseteq \smatmg$.) As is easily seen,
contractive tracial hulls
  need not be levelwise convex nor closed with respect to direct sums.  
However they do have a few good properties. These  we develop in Section \ref{sec:q1}.

 Section \ref {sec:tHBan}  contains
 notions of free spectrahedra and corresponding
 Hahn-Banach type separation theorems tailored to the tracial setting. 
 To understand {\em convex} contractively tracial sets,
 given $B\in\mbS_k^g$, let $\fH_B = (\fH_B(m))_m$ denote the sequence of sets
\[
 \fH_B(m)  = \big\{ Y\in\smatmg: \exists T\succeq 0, \ \tr(T)\le 1, \ \  I\otimes T- \sum B_j\otimes Y_j \succeq 0\big\}.
\]  
 We call $\fH_B$ a \df{tracial spectrahedron}.  
(Note that $\fH_B$ is not closed under direct sums, and thus it is not a matrix convex set.)
These $\fH_B$ are all contractively tracial and levelwise convex.
Indeed
for such structural reasons, and in view of the tracial Hahn-Banach separation theorem 
 immediately below, 
 we believe these to be the natural analogs of free spectrahedra in the tracial context.

\begin{theorem}[cf.~Theorem \ref{thm:tracehull}]
 \label{thm:tracehullIntro} 
   If $\cY\subset\smatg$ is contractively tracial,  levelwise convex and closed, and 
   if $Z \in \mathbb S_m^g$ is not in  $\mathcal Y(m)$, then there exists a $B\in \mathbb S_m^g$ such that
   $\mathcal Y\subseteq \fH_B$, but $Z\notin\fH_B$.
\end{theorem}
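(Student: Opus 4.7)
The plan is to produce the separating $B$ by applying classical Hahn--Banach at level $m$, and then use semidefinite programming duality together with the contractively tracial property of $\cY$ to propagate the separation to every level $n$.

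Setting $C=0$ in the contractively tracial property shows $0\in\cY(n)$ for all $n$. Since $\cY(m)\subseteq\smatmg$ is closed and convex and $Z\notin\cY(m)$, the classical Hahn--Banach theorem (in finite dimensions) furnishes some $B\in\mbS_m^g$ with
\[
  \phi_B(Z)\;>\;\sup_{Y'\in\cY(m)}\phi_B(Y'),\qquad\text{where}\qquad\phi_B(Y'):=\sum_{j=1}^g\tr(B_jY_j').
\]
Because $0\in\cY(m)$, the supremum above is $\ge 0$; after a positive rescaling of $B$ we may therefore arrange $\sup_{\cY(m)}\phi_B\le 1<\phi_B(Z)$.

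For $Y\in\smatng$, introduce
\[
  \lambda_B(Y)\;:=\;\inf\bigl\{\tr T:T\in\smatn,\ T\succeq 0,\ I_m\otimes T-\textstyle\sum_j B_j\otimes Y_j\succeq 0\bigr\},
\]
so that $Y\in\fH_B(n)$ exactly when $\lambda_B(Y)\le 1$; the infimum is attained because the feasible set intersected with $\{\tr T\le 1\}$ is compact. The SDP defining $\lambda_B(Y)$ is strictly feasible (take $T=cI$ with $c$ large), so by strong duality
\[
  \lambda_B(Y)\;=\;\sup\Bigl\{\textstyle\sum_j\tr\bigl(B_j\Phi(Y_j)\bigr):\Phi\colon M_n\to M_m\text{ cp, trace non-increasing}\Bigr\}.
\]
Here the dual PSD variable $\Lambda\in M_m\otimes M_n$ is translated via the Choi isomorphism and Hilbert--Schmidt adjunction into a trace non-increasing cp map $\Phi\colon M_n\to M_m$, whose Kraus form $\Phi(X)=\sum_\ell C_\ell XC_\ell^*$ satisfies $\sum_\ell C_\ell^*C_\ell\preceq I_n$---precisely the hypothesis in the contractively tracial condition (with the roles of $m$ and $n$ exchanged).

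Consequently, for any $Y\in\cY(n)$, the contractively tracial hypothesis yields $\Phi(Y)\in\cY(m)$ for every admissible $\Phi$, so $\sum_j\tr(B_j\Phi(Y_j))=\phi_B(\Phi(Y))\le 1$. Taking the supremum over $\Phi$ gives $\lambda_B(Y)\le 1$, i.e., $Y\in\fH_B(n)$; hence $\cY\subseteq\fH_B$. Conversely, taking $\Phi=\mathrm{id}$ in the dual formula (with $n=m$) shows $\lambda_B(Z)\ge\phi_B(Z)>1$, so $Z\notin\fH_B(m)$. The main obstacle is the SDP duality bookkeeping in the middle step---in particular, the Choi/Kraus translation between PSD tensors $\Lambda\in M_m\otimes M_n$ and trace non-increasing cp maps, and the Hilbert--Schmidt adjunction matching sub-unital cp with trace non-increasing cp (together with the transpose conventions inherent in the complex Hermitian setting). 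Once the dual formula is established, the tracial separation is immediate from the scalar separation at level $m$ together with the closure of $\cY$ under quantum operations.
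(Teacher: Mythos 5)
Your proof is correct and takes a genuinely different route from the one in the paper. Both begin the same way: classical Hahn--Banach at level $m$ (using $0\in\cY(m)$, which you correctly obtain from the contractively tracial hypothesis applied to a zero $C$) produces $B$ in the conventional polar dual of $\cY(m)$. From that point the two proofs diverge. The paper establishes $\cY\subseteq\fH_B$ through an Effros--Winkler-style construction (Lemma \ref{lem:separate} fed into Proposition \ref{prop:main?}): it builds a convex family of affine functionals $f_{C,Y}$ on $\smatm$, shows that each admits a nonnegative value on the set of density matrices, and then invokes the finite-intersection-of-compacts argument of Lemma \ref{lem:cone} to extract a single density matrix $T$ witnessing $I\otimes T - \sum B_j \otimes Y_j \succeq 0$. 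You instead recognize $\lambda_B(Y)=\inf\{\tr T : T\succeq 0,\ I\otimes T -\sum B_j\otimes Y_j\succeq 0\}$ as a strictly feasible SDP, pass to its dual by Slater, read the dual PSD variable through the Choi/Kraus isomorphism as a trace-non-increasing cp map $\Phi\colon M_n\to M_m$, and then use the contractively tracial hypothesis ($\Phi(Y)\in\cY(m)$) together with the scalar separation at level $m$ to cap the dual value by $1$. Both routes rest on a minimax principle at bottom, but yours outsources it to off-the-shelf SDP strong duality and thereby ties the argument directly to the Choi-matrix/cp-interpolation framework of Section \ref{sec:cp3}, whereas the paper's argument is self-contained and avoids the Choi bookkeeping entirely. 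One caveat you flag but leave unresolved: over complex Hermitian matrices the Choi pairing introduces a transpose, so the dual of your SDP is in fact $\sup_\Phi\sum_j\tr\bigl(\overline{B_j}\,\Phi(Y_j)\bigr)$ rather than $\sup_\Phi\sum_j\tr\bigl(B_j\,\Phi(Y_j)\bigr)$; the separating matrix your argument actually produces is therefore $\overline{B}$ rather than $B$. This does not affect the theorem (since $\overline{B}\in\smatmg$ as well), and the discrepancy disappears over $\R$, which is implicitly the setting the paper adopts in its own verification (note $\gamma\in\R^n\otimes\R^m$ in the proof of Lemma \ref{lem:separate}); still, the formula as you state it is not literally correct in the complex case, and the fix should be recorded explicitly.
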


 Because of the asymmetry between $B$ and $Y$ in the definition of $\fH_B$, there is a second type of
  tracial spectrahedron.  Given $Y\in\mbS_k^g$, we define the \df{opp-tracial spectrahedron} 
  as the sequence $\fHT_Y = (\fHT_Y(m))_m$ 
\begin{equation}
 \label{eq:fHT}
 \fHT_Y(m)=\{B\in\smatmg:  \exists T\succeq 0, \ \tr(T)\le 1, \ \  I\otimes T- \sum B_j\otimes Y_j \succeq 0\big\}.
\end{equation}
 Proposition \ref{prop:doubleup} computes the hulls resulting from the two different double duals
 determined by the two notions of tracial spectrahedron.

\subsection{Reader's guide}
The paper is organized as follows. Section \ref{sec:2} introduces terminology and notation used throughout the paper.
Section \ref{sec:cp3} solves the cp interpolation problems, 
and includes a background section on cp maps. 
Section \ref{sec:pdual4} contains our main results on polar duals,
free spectrahedra and 
free spectrahedrops. It uses the results of Section \ref{sec:cp3}.  In particular, 
we show that a matrix convex set is finitely generated
if and only if  it is the polar dual of a free spectrahedron (Theorem \ref{thm:polarLMI}). 
 Furthermore, we prove that 
 the polar dual of a free spectrahedrop is again a free spectrahedrop (Theorem \ref{thm:polarPolar}).
Section \ref{sec:PosSS} contains the ``perfect''
Convex Positivstellensatz for free polynomials positive semidefinite on free spectrahedrops.
 The proof depends upon the results of Section \ref{sec:pdual4}.
 In Section \ref{sec:q1} we  introduce tracial sets and hulls and discuss their connections with the quantum interpolation problems from Section \ref{sec:cp3}. Finally, Section \ref{sec:tHBan} introduces tracial spectrahedra and proves a Hahn-Banach separation theorem in the tracial context, see Theorem \ref{thm:tracehull}. This theorem is then used to 
 suggest corresponding notions of duality. Section \ref{sec:exs} contains examples.
 
\subsection*{Acknowledgments}
The authors thank Man-Duen Choi for stimulating discussions and the referees for their valuable suggestions for improving the exposition.

\section{Preliminaries}\label{sec:2}

This section introduces terminology and
presents preliminaries on free polynomials, free
sets and free convexity needed in the sequel.

\subsection{Free Sets} 
  A set $\Gamma \subset \smatg$   is closed with respect to 
  {\bf (simultaneous) unitary  conjugation} \index{unitary conjugation, simultaneous} if for each $n,$ each $A\in \Gamma(n)$
 and each $n\times n$ unitary matrix $U$,
\[
 U^* A U = (U^* A_1U,\dots, U^* A_gU)\in \Gamma(n). 
\]
 The set $\Gamma$ is a  \df{free set} if it is closed with respect to direct sums (see equation \eqref{eq:dirsums})  and 
simultaneous  unitary conjugation. In particular, a matrix convex set is a free set.
We refer the reader to 
 \cite{Voi04,Voi10,KVV+,MS11,Pope10,AM+,BB07}
for a systematic study of free sets and free function theory.
The set $\Gamma$ is
 {\bf (uniformly) bounded}\index{uniformly bounded} \index{bounded, uniformly} if there is a $C\in\R_{>0}$ such that $C-\sum X_j^2 \succeq 0$
  for all $X\in\Gamma$.

\subsection{Free Polynomials}
 One natural way free sets arise is as the nonnegativity set of a free polynomial.
  Given a positive integers $\ell$ and $\nu$,   let $\C^{\ell\times \nu}$ denote the collection of $\ell\times \nu$  matrices.
An expression of the form
 \[
  P=\sum_{w} B_w w\in\C^{\ell\times\nu}\ax,
\]
  where $B_w\in \C^{\ell\times\nu}$, and the
 sum is a finite sum over the words in the variables $x$, is a \df{free (noncommutative) matrix-valued polynomial}.  \index{free polynomial}
 The collection of all $\ell\times \nu$-valued free polynomials is denoted
  $\C^{\ell\times\nu}\ax$ and $\C\ax$ denotes the set of scalar-valued free polynomials. We use $\C^{\ell\times\nu}\ax_k$ to denote
  free polynomials of degree $\leq k$. Here the degree of a word is its \df{length}.   The free polynomial $P$ is evaluated at an $X\in\smatng$ by
\[
  P(X) =  \sum_{w\in\ax} B_w \otimes w(X) \in \C^{\ell n\times \mu n},
\]
where $\otimes$ denotes the (Kronecker) tensor product.

 There is a natural \df{involution} ${}^*$ on words that reverses the order. This
 involution extends to $\C^{\ell\times\nu}\ax$ by
\[
  P^* =\sum_w B_w^*  w^*\in\C^{\mu\times\ell}\ax. 
\]
  If $\mu=\ell$ and $P^*=P$, then  $P$ is \df{symmetric}.
 Note that if $P\in\C^{\ell\times \ell}\ax$ is symmetric, and $X\in\smatn^{g}$, then
 $P(X)\in\C^{\ell n\times \ell n}$ is a hermitian
 matrix.

\subsection{Free Semialgebraic Sets}
 The \df{nonnegativity set} $\cD_P \subset \smatg$ of a symmetric free polynomial is the sequence  of sets 
\[
 \cD_P(n) =\{X\in\smatng: P(X)\succeq 0\}.
\]
  It is readily checked that $\cD_P$ is a free set. By analogy with (commutative) real algebraic geometry,
  we call $\cD_P$ a basic \df{free semialgebraic set}. 
  Often it is assumed that $P(0)\succ 0$. 
  The free set $\cD_P$ has the additional property that it is \df{closed with respect to restriction to reducing subspaces}; that is,
  if $X\in\cD_P(n)$ and $\mathcal H\subset \C^n$ is a reducing (equivalently invariant) subspace for $X$ of dimension $m$, then 
  that $X$ restricted to $\mathcal H$ is in $\cD_P(m)$. 

\subsection{Free Convexity}
In the case that $\Gamma$ is matrix convex, 
it is easy to show that $\Gamma$ is levelwise convex.
 More generally, if $A^\ell=(A^\ell_1,\dots,A^\ell_g)$ are in $\Gamma(n_\ell)$ 
  for $1\le \ell \le k$, then 
  $A=\bigoplus_{\ell=1}^k  A^\ell\in \Gamma(n),$
  where $n=\sum n_\ell$.
 Hence, if $V_\ell$ are $n_\ell \times m$ matrices (for some $m$) such that $V=(V_1^* \, \dots \, V_k^*)^*$   is an isometry (equivalently $\sum_{\ell=1}^k V_\ell^* V_\ell =I_m$), then 
\begin{equation}
\label{eq:ccomb}
 V^* A V = \sum^k_{\ell=1} V_\ell^* A^\ell V_\ell \in \Gamma (m) .
\end{equation}
 A sum as in equation \eqref{eq:ccomb} is a 
 \df{matrix (free) convex combination} of the 
 $g$-tuples $\{A^\ell: \ \ell =1, \dots, k \}$.

\begin{lemma}[\protect{\cite[Lemma 2.3]{Lasse}}]
 \label{lem:capped}
  Suppose  $\Gamma$ is a free subset of $\smatg$. 
\ben[\rm(1)]
 \item
  If  $\Gamma$ is closed with respect to  restriction to reducing subspaces,
  then the following are equivalent: 
 \ben[\rm (i)]
  \item  $\Gamma$ is matrix convex; and
  \item  $\Gamma$ is levelwise convex. 
 \een
\item
  If $\Gamma$ is $($nonempty and$)$ matrix convex, then $0\in \Gamma(1)$
  if and only if  $\Gamma$ is 
    \df{closed with respect to ({\it simultaneous}) conjugation by contractions}. 
\een
\end{lemma}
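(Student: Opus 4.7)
The plan is as follows. For part (1), the forward direction (matrix convex $\Rightarrow$ levelwise convex) is the easy observation already noted in the paragraph preceding the lemma: an isometric conjugation by $V=(\sqrt{t}\,I_n,\sqrt{1-t}\,I_n)^\ast$ turns a direct sum $A^1\oplus A^2\in\Gamma(2n)$ into the convex combination $tA^1+(1-t)A^2$. The real content is the converse, for which I would use a ``plus/minus'' averaging trick. Given $X\in\Gamma(n)$ and an $n\times m$ isometry $V$ (the case $m=n$ being trivial by unitary conjugation), extend $V$ to a unitary $U_+=[V\ W]$ with $W$ an $n\times(n-m)$ isometry satisfying $V^\ast W=0$, and set $U_-=[V\ -W]$. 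Both $U_\pm^\ast X U_\pm$ lie in $\Gamma(n)$ by simultaneous unitary conjugation, so by levelwise convexity their average
$$\tfrac12\bigl(U_+^\ast X U_+ + U_-^\ast X U_-\bigr) = \begin{pmatrix} V^\ast XV & 0\\ 0 & W^\ast X W\end{pmatrix}$$
also lies in $\Gamma(n)$. Since $\C^m\oplus 0\subset \C^n$ is a reducing subspace for this block-diagonal tuple, closure under restriction to reducing subspaces yields $V^\ast XV\in\Gamma(m)$, as required.

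For part (2), the forward direction should follow from a Halmos-style isometric dilation. Given $X\in\Gamma(n)$ and an $n\times m$ contraction $C$ (i.e.\ $C^\ast C\preceq I_m$), I would form the isometry
$$V = \begin{pmatrix} C\\ (I_m-C^\ast C)^{1/2}\end{pmatrix}\colon \C^m \to \C^{n+m}.$$
Iterated direct sums of $0\in\Gamma(1)$ place the zero tuple $0_m=(0,\dots,0)\in\smatmg$ in $\Gamma(m)$, hence $X\oplus 0_m\in\Gamma(n+m)$, and a one-line computation gives $V^\ast (X\oplus 0_m)V=C^\ast XC$; matrix convexity (isometric conjugation) then puts this in $\Gamma(m)$. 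The reverse implication is immediate: for any $X\in\Gamma(n)$, apply contractive conjugation with the zero column $C=0\in\C^{n\times 1}$ to produce $0=C^\ast XC\in\Gamma(1)$.

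The main conceptual obstacle is the averaging step in (1): isometric conjugation by a non-unitary $V$ is not visibly expressible via the tools at hand (direct sums, unitary conjugation, restriction to reducing subspaces, and levelwise convexity), and the key insight is that the off-diagonal blocks $V^\ast XW$ created when one extends $V$ to a unitary can be cancelled by averaging the $U_+$ and $U_-$ conjugations. This reduces the question to a block-diagonal tuple whose upper reducing summand is exactly $V^\ast XV$, after which the reducing-subspace hypothesis closes the argument. Everything else amounts to routine dilation bookkeeping.
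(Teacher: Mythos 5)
Your proof is correct and complete. The paper itself does not prove Lemma \ref{lem:capped} — it cites it directly from \cite[Lemma 2.3]{Lasse} — so there is no in-paper proof to compare against. That said, your argument is exactly the standard one one expects to find: in part (1) the $\pm W$ averaging trick, which isolates the compression $V^*XV$ as a reducing summand of a block-diagonal tuple obtained as a convex combination of two unitary conjugates of $X$; and in part (2) the Halmos dilation $V=\bigl(C,\ (I-C^*C)^{1/2}\bigr)^*$, which converts contractive conjugation into isometric conjugation of $X\oplus 0_m$, with the direct-sum closure supplying $0_m\in\Gamma(m)$. One tiny remark: in part (1) your reduction to $m\le n$ is automatic (an isometry $\C^m\to\C^n$ forces $m\le n$), and the $m=n$ case is genuinely the unitary case as you note, so no gap. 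The steps each invoke precisely one hypothesis — unitary conjugation, levelwise convexity, then restriction to a reducing subspace — and the free-set hypothesis then contributes closure under direct sums to finish matrix convexity; nothing is missing.
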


Convex subsets  of $\R^g$ are defined 
as intersections of half-spaces and are thus described by linear functionals.
Analogously, 
matrix convex subsets of $\smatg$ are defined by linear pencils; cf.~\cite{EW97,HM12}.
We next present basic facts about  linear pencils and their associated matrix convex sets.

\subsubsection{Linear Pencils}
  Recall the definition (see equation \eqref{eq:LMIx}) of the (affine) linear pencil $L_A(x)$ associated to a tuple $A=(A_0,\dots,A_g)\in\mathbb S_k^{g+1}$. \index{$L_A$}
  In the case that $A_0=0$; i.e.,  $A=(A_1,...,A_g) \in \mathbb{S}_k^g$, let \index{$\varLambda_A$}
\[
     \varLambda_A(x)= \sum_{j=1}^g A_j x_j
\]
    denote the corresponding \df{homogeneous (truly) linear pencil}   and  \index{$\mL_A$}
\[
   \mL_A = I- \varLambda_A
\]
  the associated \df{monic linear pencil}.

  The pencil $L_A$ (also $\mL_A$) is a free polynomial with matrix coefficients, so is naturally evaluated on $X \in \smatng$
using (Kronecker's) tensor product yielding equation \eqref{eq:LMI}. 
The free semialgebraic set $\cD_{L_A}$  
is easily seen to be matrix convex.  We will refer to 
 $\cD_{L_A}$ 
 as  a \df{free spectrahedron} or
 \df{free LMI domain}  and
say that a free set $\Gamma$ is \df{freely LMI representable}
 if there is a linear pencil $L$ such that $\Gamma =\cD_{L}$.
 In particular, if $\Gamma$ is freely LMI representable with a monic $\mL_A$, then $0$ is in the interior of $\Gamma(1)$.

The following is a special case  of a theorem due to
Effros and Winkler \cite{EW97}. (See also \cite[Theorem 3.1]{HKM+}.)
  Given a free set $\Gamma$, if $0\in \Gamma(1)$,
  then $0\in\Gamma(n)$  for each $n$.  
 In this case we will write $0\in\Gamma$. 

\begin{theorem}
 \label{prop:sharp}
  If $\cC=(\cC(n))_{n\in\N} \subseteq\smatg$ is a closed matrix convex set containing $0$ and
  $Y\in\smatmg$ is not in $\cC(m)$, then there
 is a monic linear pencil $\mL$ of size $m$ such that
  $\mL(X)\succeq 0$ for all  $X\in\cC$, but
  $\mL(Y)\not\succeq 0$. 
\end{theorem}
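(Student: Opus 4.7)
I aim to construct a monic linear pencil $L(x)=I_m+\sum_j A_j x_j$ of size $m$, with $A_j\in\mbS_m$, satisfying $L(X)\succeq 0$ for every $X\in\cC$ and $L(Y)\not\succeq 0$. My approach combines classical (scalar) Hahn-Banach at level $m$ with the matricial structure of $\cC$, upgrading a scalar separator to a matricial one.

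The first step is to reformulate matricial positivity of $L$ as a family of scalar inequalities. Testing $\langle L(X)v,v\rangle\ge 0$ on $v=\mathrm{vec}(W)\in\C^m\otimes\C^n$ for $W\in\C^{n\times m}$ yields the identity
\[
\langle L(X)v,v\rangle \;=\; \tr(W^*W)\;+\;\sum_j \tr\!\bigl(A_j\,W^*X_jW\bigr).
\]
Since $0\in\cC$, Lemma \ref{lem:capped}(2) provides closure of $\cC$ under contractive conjugation, so whenever $W$ is a contraction the compression $Z:=W^*XW$ lies in $\cC(m)$. Homogeneity in $W$ reduces the matricial condition $L(X)\succeq 0$ for all $X\in\cC$ to a scalar family indexed by $Z\in\cC(m)$ together with a scale factor $\tr(W^*W)$.

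Next I would apply classical Hahn-Banach at level $m$. By Lemma \ref{lem:capped}(1), $\cC(m)\subseteq\smatmg$ is closed and convex; it contains $0$ and excludes $Y$. Hahn-Banach in the real inner-product space $(\smatmg,\langle\cdot,\cdot\rangle)$ with trace pairing $\langle A,X\rangle:=\sum_j\tr(A_j X_j)$ produces $A^0\in\smatmg$ and $c>0$ with $\langle A^0,X\rangle\le c$ for $X\in\cC(m)$ and $\langle A^0,Y\rangle>c$. Taking $A_j:=-A^0_j/c$, combined with the rescaling $\tilde W=W/\sqrt{\tr(W^*W)}$ (which is automatically a contraction since spectral norm is dominated by Frobenius norm) in the reduction of the previous paragraph, one readily verifies $L(X)\succeq 0$ for every $X\in\cC$.

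The main obstacle is strict separation at $Y$: $L(Y)\not\succeq 0$ is not automatic from the scalar inequality $\langle A^0,Y\rangle>c$ alone, and a naive evaluation (e.g., on $v=\mathrm{vec}(I_m)$) may leave $L(Y)$ only on the boundary of the PSD cone rather than strictly outside it. Resolving this requires a genuinely matricial Hahn-Banach argument. The cleanest route is via Webster--Winkler operator-system duality: $\cC$ corresponds canonically to an operator system $\cS_\cC$ whose unital completely positive maps $\cS_\cC\to M_n$ parameterize $\cC(n)$; the hypothesis $Y\notin\cC(m)$ translates to the candidate linear map $\phi_Y\colon\cS_\cC\to M_m$, $y_j\mapsto Y_j$, failing complete positivity, and by Choi's criterion (equivalently by Arveson or Wittstock extension) this failure is detected by a single finite-dimensional positive witness, which unpacks to coefficient matrices $A\in\mbS_m^g$ with $I+\sum_j A_j\otimes Y_j\not\succeq 0$. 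Both axioms of matrix convexity (direct sums and isometric conjugation) are used essentially in this upgrade, and the level $m$ at which $Y$ lives forces the separating pencil to have size exactly $m$.
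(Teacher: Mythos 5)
The paper cites \cite{EW97} and \cite[Theorem~3.1]{HKM+} for this result without giving a proof, so the question is whether your proposal is a self-contained argument. Your opening paragraphs are a correct diagnosis of the difficulty: the vec identity, closure under contractive conjugation (Lemma~\ref{lem:capped}(2)), and a scalar Hahn--Banach separation of $\cC(m)$ from $Y$ do combine to give a monic pencil $L$ of size $m$ with $L(X)\succeq 0$ on all of $\cC$. But the scalar witness only controls $\langle L(Y)v,v\rangle$ at the single vector $v=\operatorname{vec}(I_m)$, where it evaluates to $m-\langle A^0,Y\rangle/c$, and this need not be negative. You identify this gap clearly, and it is exactly what makes the theorem nontrivial.

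The proposed fallback via operator-system duality, however, is circular as written. The correspondence you invoke — that $\cC(n)$ is parameterized by the ucp maps on an operator system $\cS_\cC$ — is itself established by means of the Effros--Winkler separation theorem, equivalently the bipolar statement $\cC=\cC^{\circ\circ}$, which in this paper is Proposition~\ref{prop:poDual}(6) and whose proof rests on Theorem~\ref{prop:sharp}. The inclusion ``every ucp image of the generators lies in $\cC$'' is elementary from matrix convexity, but the other inclusion, that every point of $\cC(n)$ arises this way, amounts to $\cC^{\circ\circ}(n)\subseteq\cC(n)$ — precisely what you are trying to prove. Even granting the correspondence, the assertion that the positive witness detecting failure of complete positivity can be taken of size exactly $m$ and of the monic form $I+\sum_j A_j\otimes Y_j$ with $A_j\in\mbS_m$ is the sharpest technical point of the statement, and you only claim it rather than derive it. A non-circular proof must build the matrix-level separator directly: the original \cite{EW97} argument constructs matrix sublinear functionals, and the version in \cite{HKM+} proceeds via a GNS-style construction.
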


By the following 
 result from \cite{HM12}, linear matrix inequalities  account for
  matrix convexity  of free semialgebraic sets.

\def\gP{\mathfrak P}
\begin{theorem}
 \label{thm:HMlmirep}
  Fix $p$ a 
   symmetric matrix polynomial. 
   If 
   $p(0)\succ 0$  and  the strict positivity set 
   $\gP_p=\{X: p(X)\succ0\}$  of $p$ is bounded, 
then   $\gP_p$ 
       is matrix convex if and only if 
   if is freely LMI representable with a monic pencil. 
\end{theorem}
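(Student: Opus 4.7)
The plan has two parts. The ``if'' direction is routine: if $\gP_p = \{X:\mL(X)\succ 0\}$ for a monic pencil $\mL$, then $\mL(X\oplus Y) = \mL(X)\oplus \mL(Y)$ handles direct sums, and for an isometry $V$ one has $\mL(V^*XV) = (I\otimes V)^*\mL(X)(I\otimes V)$, so $\mL(X)\succ 0$ forces $\mL(V^*XV)\succ 0$ since $I\otimes V$ is again an isometry. Hence $\gP_p$ is matrix convex. I would devote essentially all of the argument to the converse.

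For the ``only if'' direction I would proceed in two steps. First, apply the Effros--Winkler separation theorem (Theorem \ref{prop:sharp}) to the closed matrix convex set $\overline{\gP_p}$, which contains $0$ in its interior since $p(0)\succ 0$: for every $Y\notin\overline{\gP_p}$ one obtains a finite-size monic linear pencil $\mL_Y$ with $\mL_Y\succeq 0$ on $\overline{\gP_p}$ but $\mL_Y(Y)\not\succeq 0$. This represents $\overline{\gP_p}$ as an a priori infinite intersection of finite-dimensional free spectrahedra $\cD_{\mL_Y}$. Second, extract from this family a single finite-dimensional monic pencil $\mL$ representing the same set. The key additional input is algebraicity: the boundary of $\gP_p(n)$ lies in the real algebraic hypersurface $\{X : \det p(X) = 0\}$. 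At a smooth boundary point where some separating $\mL_Y$ touches $\overline{\gP_p}$, the irreducible factor of $\det p$ cutting out that boundary component must locally divide $\det \mL_Y$; combined with boundedness of $\gP_p$, the finitely many irreducible factors of $\det p$ pin down finitely many canonical monic pencils $\mL^{(1)},\dots,\mL^{(k)}$. Their direct sum $\mL = \bigoplus_i \mL^{(i)}$ is again a monic linear pencil and is forced, level by level, to cut out exactly $\gP_p$.

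The principal obstacle is Step 2, the passage from the infinite Effros--Winkler intersection to a single finite-dimensional matrix pencil. This is essentially a free version of Helton--Vinnikov-style rigid convexity: an irreducible real polynomial whose free strict positivity set is bounded, matrix convex, and contains $0$ in its interior must, up to a power, be the determinant of a monic linear pencil. The hypotheses $p(0)\succ 0$ and boundedness are both essential for this rigidity to hold. One must also handle reducible $\det p$ by separating the irreducible factors that actually cut out the matrix-convex boundary of $\gP_p$ from those which are inert inside it, and then show that the pencil attached to each active factor is independent of the level $n$ at which it was extracted. This matching of each boundary component with a uniquely determined pencil across all matrix levels is the technical heart of the argument.
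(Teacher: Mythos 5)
The ``if'' direction you give is fine. The gap is entirely in Step 2 of the ``only if'' direction, and it is not a technical obstacle to be worked around but a misidentification of what kind of theorem this is. You invoke ``a free version of Helton--Vinnikov-style rigid convexity: an irreducible real polynomial whose free strict positivity set is bounded, matrix convex, and contains $0$ in its interior must, up to a power, be the determinant of a monic linear pencil.'' But that is, in essence, the statement of Theorem~\ref{thm:HMlmirep} itself; invoking it makes the argument circular. There is no pre-existing free rigid-convexity principle you can appeal to here. Moreover, the commutative precedent you lean on does not support the plan even at level $1$: for $g>2$ it is simply false that a bounded convex semialgebraic set with algebraic boundary must be a spectrahedron (as opposed to a spectrahedral shadow), so ``boundedness $+$ convexity $+$ algebraic boundary $\Rightarrow$ LMI'' is not a lemma you get for free from the classical theory. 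The extra force that makes the free statement true is the convexity hypothesis at \emph{all} matrix levels simultaneously, and exploiting that requires genuinely free tools.

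The mechanism you propose for extracting a single pencil from the Effros--Winkler family --- looking at $\det p(X)$, factoring it, and matching irreducible components of the boundary with irreducible factors, then ``showing the pencil attached to each active factor is independent of the level $n$'' --- is not a well-defined free operation. At each level $n$, $\det p(X)$ is a polynomial in $\sim g n^2$ scalar variables, and the determinantal varieties at different $n$ are not linked by any algebraic correspondence your argument could propagate; the ``independence of level'' you need is precisely the content of the theorem, not something that falls out of a Zariski-closure computation. The actual proof in \cite{HM12} takes a completely different route: it reduces to a minimum-degree defining polynomial for the boundary, uses a Positivstellensatz-type analysis of the second directional derivative (Hessian) of $p$ together with a ``middle matrix'' positivity argument to force that minimum degree down to $2$, and then reads the monic pencil off the structure of a degree-two polynomial via a Schur-complement construction. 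None of this has a counterpart in your outline. Note also that the paper you are reading does not reprove this theorem --- it simply cites \cite{HM12} --- so a self-contained proof was never on offer here; but as a proof sketch, your Step 2 would not close.
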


\subsection{A Convex Positivstellensatz and LMI domination}
Positivstellens\"atze are pillars of real algebraic geometry \cite{BCR98}.
We next recall the  Positivstellensatz for a free polynomial $p$. It is the algebraic certificate for  nonnegativity of $p$ on the free spectrahedron $\cD_L$ from \cite{HKM12}.  
It is ``perfect'' in the sense that $p$ is only assumed to be nonnegative
on $\cD_L$, and we obtain  degree bounds on the scale of $\deg(p)/2$
for the polynomials involved in the positivity certificate.
In Section \ref{sec:PosSS}, 
we will extend this Positivstellensatz to free spectrahedrops (i.e., projections of free spectrahedra). See  Theorem \ref{thm:posDrop}.

\begin{thm}\label{thm:convexPos}
Suppose $\mL$ is a monic linear pencil. A matrix polynomial 
 $p$ is positive semidefinite
 on $\cD_{\mL}$ if and only if it
 has a weighted sum of squares representation with optimal degree bounds:
\[ 
 p  = s^* s   + \sum_j^{\rm finite} f_j^* \mL f_j,
\] 
where
 $s, f_j$ are matrix polynomials of degree no greater than
 $\frac{\deg(p) }{2}$.

In particular, if  $\mL_A$, $\mL_B$ are monic linear pencils, then 
$\cD_{\mL_B} \subset \cD_{\mL_A}$ if and only if there exists
a positive integer $\mu$ and a contraction $V$ such that 
\beq\label{eq:lmi-domination2}
 A= 
V^*(I_\mu\otimes B) V. 
\eeq
In the case $\cD_{\mL_B}$ is bounded, $V$ can be chosen to be an isometry.
\end{thm}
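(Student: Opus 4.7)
The plan is to prove the two assertions in sequence, with the Positivstellensatz doing the heavy lifting and the LMI domination statement following as a corollary via a degree count. For the Positivstellensatz, the ``if'' direction is immediate since each of $s^* s$ and $f_j^* \mL f_j$ is positive semidefinite on $\cD_{\mL}$. For the ``only if'' direction I would argue by Hahn--Banach separation. Set $r = \lceil \deg(p)/2 \rceil$ and let $C_r$ denote the convex cone, inside the finite-dimensional space of matrix polynomials of degree at most $2r$, consisting of all expressions $s^* s + \sum_j f_j^* \mL f_j$ with $\deg s, \deg f_j \le r$. The crux is to show that $C_r$ is closed; this is where the ``perfect'' degree bound lives, and it relies on the monicity of $\mL$ (which ensures $\mL(0)=I\succ 0$) together with a truncated rank/Carath\'eodory argument that bounds the number of summands needed in any representation. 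Once closure is in hand, if $p\notin C_r$ then Hahn--Banach provides a symmetric $\R$-linear functional $\lambda$, nonnegative on $C_r$ with $\lambda(p)<0$, and applying the GNS construction to the bilinear form $\langle q_1, q_2\rangle := \lambda(q_2^* q_1)$ yields a finite-dimensional Hilbert space, a tuple $X$ of hermitian coordinate-multiplication operators, and a distinguished vector $v$. Nonnegativity of $\lambda$ on expressions $f^* \mL f$ translates into $\mL(X)\succeq 0$, so $X\in\cD_{\mL}$, while $\lambda(p)<0$ encodes $\langle p(X) v, v\rangle<0$, contradicting the hypothesis. The main obstacle is establishing closedness of $C_r$ at the tight degree $r$, which is where the technical care concentrates.

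For the LMI domination statement, the backward direction is a direct Kronecker computation: from $\mL_B(X)\succeq 0$ one obtains $I_\mu \otimes \mL_B(X)\succeq 0$, and conjugation by $V\otimes I$ together with $V^* V\preceq I$ gives
\[
\mL_A(X) \;\succeq\; V^*V\otimes I - \sum_j A_j \otimes X_j \;=\; (V\otimes I)^*\bigl[I_\mu \otimes \mL_B(X)\bigr](V\otimes I) \;\succeq\; 0.
\]
For the forward direction, I apply the Positivstellensatz to the degree-one polynomial $p = \mL_A$, which is positive semidefinite on $\cD_{\mL_B}$. The degree bound then forces $s$ and every $f_j$ to be a constant matrix, and matching the constant and linear parts of
\[
I - \sum_i A_i x_i \;=\; s^* s + \sum_j f_j^* \Bigl(I - \sum_i B_i x_i\Bigr) f_j
\]
yields $I = s^* s + \sum_j f_j^* f_j$ and $A_i = \sum_j f_j^* B_i f_j$ for each $i$. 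Stacking the $f_j$ into a column $V$ produces a contraction with $V^*V = I - s^* s$ and $V^*(I_\mu\otimes B_i)V = A_i$, as required. In the bounded case one must absorb the slack $s^* s$ to upgrade $V$ to an isometry: boundedness of $\cD_{\mL_B}$ guarantees that the unital linear map $I\mapsto I$, $B_i\mapsto A_i$ extends to a unital completely positive map on the operator system generated by $\{I, B_1,\dots,B_g\}$, whose Stinespring dilation supplies an isometric $\tilde V$ representing the same tuple $A$ at the cost of possibly enlarging $\mu$.
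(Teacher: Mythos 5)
Your route for the Positivstellensatz itself (closedness of the truncated cone, Hahn--Banach separation, GNS to produce $X\in\cD_{\mL}$) reproduces the argument behind the result the paper simply cites as~\cite[Theorem~1.1]{HKM12}; and your derivation of the LMI domination consequence, by applying that Positivstellensatz to the degree-one polynomial $p=\mL_A$, matching constant and linear coefficients, and stacking the $f_j$ into a contraction $V$, is exactly the paper's argument. (A small bookkeeping slip: the degree cutoff should be $r=\lfloor\deg(p)/2\rfloor$, not $\lceil\deg(p)/2\rceil$, so that $f_j^*\mL f_j$ stays within degree $\deg(p)$.)

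The genuine divergence, and the gap, is in the bounded case. You assert that boundedness of $\cD_{\mL_B}$ forces the unital map on $\Span\{I,B_1,\dots,B_g\}$ sending $B_i\mapsto A_i$ to be completely positive, and then invoke Arveson extension plus Stinespring. That assertion is true, but it is itself a nontrivial step -- indeed it is precisely where boundedness does its work -- and you offer no argument for it. The paper gives two ways to finish: either cite~\cite{HKM+}, which establishes exactly this cp statement (the same reasoning appears in this paper inside the proof of Corollary~\ref{cor:polardualofdrop}: in any block-positive form $I\otimes X_0+\sum_j B_j\otimes X_j\succeq0$, boundedness forces $X_0\succeq0$, after which conjugation by $X_0^{-1/2}$ reduces to the unital case); or, more cleanly and self-containedly, use~\cite[Proposition~4.2]{HKM12}, which under boundedness yields finitely many $W_j$ with $I=\sum_j W_j^*\mL_B(x)W_j$ identically, so that the slack term $S^*S=\sum_j(W_jS)^*\mL_B(x)(W_jS)$ can be absorbed into the weighted sum and the stacked coefficient column becomes automatically an isometry. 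This second route never leaves the Positivstellensatz framework and avoids the cp-extension machinery that your sketch needs but does not supply.
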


\begin{proof}
The first statement is \cite[Theorem 1.1]{HKM12}. Applying this result
to the \df{LMI domination problem}  $\cD_{\mL_B}\subseteq\cD_{\mL_A}$, we see $\cD_{\mL_B}\subseteq \cD_{\mL_A}$  is equivalent to
\begin{equation}
\label{eq:ASB}
\mL_A(x) = S^*S + \sum_{j=1}^\mu V_j^* \mL_B(x) V_j
\end{equation}
for some matrices $S,V_j$;
i.e., 
\begin{align}\label{eq:LMIdominate1}
I & = S^*S + \sum_j V_j^* V_j= S^*S+V^*V \\
 A& =\sum_{j=1}^\mu V_j^* B V_j 
= V^*(I_\mu\otimes B) V,\label{eq:LMIdominate2}
\end{align}
where $V$ is the block column matrix of the $V_j$. 
Equation \eqref{eq:LMIdominate1} simply says that $V$ is a contraction, and \eqref{eq:LMIdominate2} is \eqref{eq:lmi-domination2}.  The last statement is proved in \cite{HKM+}. Alternately, as is shown in \cite[Proposition~4.2]{HKM12}, if $\cD_{\mL_B}$ is bounded, then there are finitely many  matrices $W_j$  such that 
\[
 I = \sum W_j^* \mL_B(x) W_j.
\]
Writing $S^*S = \sum (W_jS)^* \mL_B(x) (W_jS)$ and substituting into equation \eqref{eq:ASB} completes the proof.
\end{proof}

Example \ref{ex:fails} shows that it is not necessarily possible to choose $V$ an isometry in equation \eqref{eq:LMIdominate2} in absence of additional hypothesis on the tuple $B$.

\section{Completely Positive Interpolation}\label{sec:cp3}
Theorem \ref{thm:interp} provides a solution to three cp interpolation problems in terms of concrete LMIs that can be solved with a standard semidefinite programming (SDP) solver. The unital cp interpolation problem comes from efforts to understand matrix convex sets that arise in convex optimization. Its solution  plays an important role in the
 proof of the main result on the polar dual of a free spectrahedrop,
 Theorem \ref{thm:polarPolar}, via its appearance in the proof of
  Proposition \ref{prop:liftPolar}. 

The trace preserving and  trace non-increasing cp interpolation problems 
arise  in quantum information theory and the study of quantum channels, where one is interested in sending one (finite) set of  prescribed  quantum states to another.

\subsection{Basics of Completely Positive Maps}
 This subsection collects basic facts about completely positive (cp) maps
 $\phi:\cS \to M_d$, where $\cS$ is a subspace of $M_n$ closed
 under conjugate transpose (see for instance \cite{Pau02}) containing a positive definite matrix. 
 Thus $\cS$ is a \df{operator system}.

Suppose $\cS$ is a subspace of $ M_n$ closed under conjugate transpose,  $\phi:\cS \to M_d$ is a linear map and $\ell$ is a positive integer.
The ($\ell$-th) \df{ampliation} $\phi_\ell :M_\ell(\cS)\to M_\ell(M_d)$  of $\phi$ is defined by
 by applying $\phi$ entrywise,
\[
 \phi_\ell (S_{j,k}) = \begin{pmatrix} \phi(S_{j,k})\end{pmatrix}.
\]
 The map  $\phi$ is
   \df{symmetric} if $\phi(S^*)=\phi(S)^*$ and it is
  \df{completely positive} if each $\phi_\ell$ is positive in 
  the sense that if $S\in M_\ell(\cS)$ is positive semidefinite, 
  then so is $\phi_\ell(S) \in M_\ell(M_d)$.   In what follows, often
 $\cS$ is a subspace of $\mathbb S_n$ (and is thus automatically closed under 
  the conjugate transpose operation).

 The \df{Choi matrix} of a mapping $\phi: M_n\to M_d$ is the 
 $n\times n$ block matrix with $d\times d$ matrix entries given by
\[
( C_\phi)_{i,j} = \big(\phi(E_{i,j})\big)_{i,j}.
\]
 On the other hand, a matrix $C=(C_{i,j}) \in M_n(M_d)$ determines a mapping
 $\phi_C:M_n\to M_d$ by $\phi_C(E_{i,j}) = C_{i,j}\in M_d$. 
 A matrix $C$ is {\bf a Choi  matrix for} $\phi:\cS\to M_d$,  if
 the mapping $\phi_C$ agrees with $\phi$ on $\cS$.

\begin{theorem}
 \label{thm:cpstuff}
 For $\phi:M_n\to M_d$, the following are equivalent:
 \begin{enumerate}[\rm (a)]
  \item $\phi$ is completely positive;
  \item the Choi matrix $C_\phi$ is positive semidefinite.
  \end{enumerate}

   Suppose $\cS\subseteq M_n$ 
   is an operator system. 
   For a symmetric $\phi:\cS\to M_d,$  the following are equivalent:
 \begin{enumerate}[\rm (i)]
  \item $\phi$ is completely positive;
  \item $\phi_d$ is positive;
  \item there exists a completely positive mapping $\Phi:M_n\to M_d$ extending $\phi$;
  \item there is a positive semidefinite Choi matrix for $\phi$; 
  \item there exists $n\times d$ matrices $V_1,\dots,V_{nd}$ such that
\beq\label{eq:choi}
 \phi(A) = \sum V_j^* A V_j.
\eeq
 \end{enumerate}

  Finally, for a subspace  $\cS$  of $M_n,$ a mapping $\phi:\cS\to M_d$ has
  a completely positive extension $\Phi:M_n\to M_d$ if and only if $\phi$
  has a positive semidefinite Choi matrix. 
\end{theorem}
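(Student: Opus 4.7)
The final statement is essentially a tautological consequence of the (a)$\Leftrightarrow$(b) equivalence already recorded in the theorem (the classical Choi identification between completely positive maps $M_n\to M_d$ and positive semidefinite elements of $M_n(M_d)$). So my plan is simply to chase the definitions in both directions, using that the full matrix algebra $M_n$ has the basis $\{E_{i,j}\}$, so any map defined on this basis is determined on all of $M_n$.

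For the ($\Leftarrow$) direction, suppose $C=(C_{i,j})\in M_n(M_d)$ is a positive semidefinite Choi matrix for $\phi$. Then by definition $C$ determines a linear map $\phi_C:M_n\to M_d$ via $\phi_C(E_{i,j})=C_{i,j}$, and by hypothesis $\phi_C|_\cS = \phi$. Since the Choi matrix of $\phi_C$ is, by construction, exactly $C$, the positivity of $C$ combined with the equivalence (a)$\Leftrightarrow$(b) of the theorem shows that $\phi_C$ is completely positive. Hence $\Phi:=\phi_C$ is the desired CP extension.

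For the ($\Rightarrow$) direction, suppose $\Phi:M_n\to M_d$ is a CP extension of $\phi$. By (a)$\Rightarrow$(b), its Choi matrix $C_\Phi=(\Phi(E_{i,j}))_{i,j}$ is positive semidefinite. I claim $C_\Phi$ is a Choi matrix for $\phi$: indeed, the associated map $\phi_{C_\Phi}:M_n\to M_d$ sends $E_{i,j}$ to $\Phi(E_{i,j})$, so by linearity $\phi_{C_\Phi}=\Phi$ on all of $M_n$, and in particular agrees with $\phi$ on $\cS$. Thus $C_\Phi$ is a positive semidefinite Choi matrix for $\phi$.

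I don't anticipate any obstacle; the only conceptual point worth flagging is that, unlike in parts (i)--(v) of the theorem where $\cS$ is assumed to be an operator system, the final statement makes no such assumption. That is fine because the argument above never uses positivity or unitality on $\cS$: the CP extension is produced directly from the Choi matrix rather than via a Stinespring-type argument on $\cS$ itself. (This is also why one cannot necessarily read off a positive semidefinite Choi matrix from an arbitrary positive $\phi$ on a general subspace---one really needs the CP extension, or equivalently some PSD Choi matrix, to exist.)
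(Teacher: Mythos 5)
Your argument for the final paragraph of the theorem is correct and it is the standard one: a PSD Choi matrix $C$ for $\phi$ induces, via $\phi_C(E_{i,j})=C_{i,j}$, a map on all of $M_n$ whose Choi matrix is $C$ itself, so $\phi_C$ is CP by (a)$\Leftrightarrow$(b) and restricts to $\phi$; conversely a CP extension $\Phi$ has PSD Choi matrix $C_\Phi$, and $C_\Phi$ is by definition a Choi matrix for $\phi$. Your closing remark---that this criterion works for an arbitrary subspace precisely because the extension is manufactured from the Choi matrix rather than from positivity data on $\cS$---is the right conceptual point and is exactly why the hypothesis ``operator system'' is dropped in the final clause.

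For context on the comparison: the paper gives \emph{no} proof of Theorem~\ref{thm:cpstuff}; it is presented as a compendium of standard facts from the cp-maps literature (Choi's theorem for (a)$\Leftrightarrow$(b), Arveson's extension theorem for (i)$\Rightarrow$(iii), and the fact that $d$-positivity into $M_d$ implies complete positivity for (ii)$\Rightarrow$(i)), with the reader pointed to \cite{Pau02} and \cite{Arv69}. So there is nothing in the paper to measure your argument against; your treatment---proving the last statement as a corollary of (a)$\Leftrightarrow$(b) and treating the rest as black boxes---is consistent with how the paper uses the result. The one thing to keep in mind if this were meant as a self-contained proof of the whole theorem: blocks (a)$\Leftrightarrow$(b) and (i)--(v) are not addressed at all, and neither (ii)$\Rightarrow$(i) nor (i)$\Rightarrow$(iii) is a matter of unwinding definitions.
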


\def\bel{\begin{lemma} }
\def\eel{\end{lemma}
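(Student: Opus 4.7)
The plan is to prove the three parts in sequence, using Part 1 (Choi's theorem for maps on the full matrix algebra $M_n$) as the engine. Once Part 1 is in hand, Part 3 is immediate from the definition of a Choi matrix, and all but one implication of Part 2 is essentially bookkeeping. The only nontrivial step is (ii)$\Rightarrow$(iv) in Part 2, which calls for an Arveson-style Hahn-Banach extension argument for positive functionals on operator systems.

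For Part 1, the key observation is the identity $C_\phi = \phi_n(ee^*)$, where $e = \sum_{i=1}^n e_i \otimes e_i \in \C^n \otimes \C^n$. Since $ee^*$ is a rank-one positive matrix, (a)$\Rightarrow$(b) is immediate. Conversely, if $C_\phi \succeq 0$, factor $C_\phi = \sum_k v_k v_k^*$ with $v_k \in \C^n \otimes \C^d$ (e.g.\ by eigendecomposition), and identify each $v_k$ with an $n \times d$ matrix $V_k$ via the canonical isomorphism $\C^n \otimes \C^d \cong \C^{n \times d}$. A bookkeeping computation on matrix units then shows $\phi(E_{ij}) = \sum_k V_k^* E_{ij} V_k$, whence $\phi(A) = \sum_k V_k^* A V_k$ for every $A \in M_n$; such a sum of conjugation maps is manifestly completely positive. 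This same factorization yields (iv)$\Rightarrow$(v) in Part 2.

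For Part 2, I would first collect the easy implications: (i)$\Rightarrow$(ii) is by definition; (v)$\Rightarrow$(i) is immediate; (iv)$\Rightarrow$(iii) follows by using the psd Choi matrix $C$ to define $\phi_C : M_n \to M_d$, which is cp by Part 1 and agrees with $\phi$ on $\cS$; (iii)$\Rightarrow$(i) is restriction; and (iii)$\Rightarrow$(iv) is Part 1 applied to the extension. This leaves only (ii)$\Rightarrow$(iv). For that, define $\omega : M_d(\cS) \to \C$ by $\omega(S) = \langle \phi_d(S)\hat e, \hat e \rangle$ with $\hat e = \sum_{k=1}^d e_k \otimes e_k \in \C^d \otimes \C^d$; positivity of $\phi_d$ makes $\omega$ a positive linear functional on the operator system $M_d(\cS) \subseteq M_{nd}$. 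Invoke the standard Hahn-Banach extension theorem for positive functionals on operator systems (a positive definite element of $\cS$ supplies an order unit in $M_d(\cS)$) to extend $\omega$ to a positive functional $\tilde\omega$ on $M_{nd}$, represent $\tilde\omega$ by a psd matrix $C \in M_{nd}$, and reindex to obtain a psd Choi matrix for $\phi$. Part 3 is now immediate: a psd Choi matrix defines a cp extension $\phi_C : M_n \to M_d$ by Part 1, and conversely any cp extension carries a psd Choi matrix which serves as a Choi matrix for $\phi$.

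The principal obstacle is precisely the Hahn-Banach extension step in (ii)$\Rightarrow$(iv). Translating $d$-positivity of $\phi_d$ into the existence of a positive global extension, and then carefully repackaging that extension (after an index permutation on $M_d(M_n) \cong M_n(M_d)$) as a Choi matrix for $\phi$, is the one point where a genuine analytic argument beyond linear bookkeeping is required. Every other implication is either definitional or falls out of the Choi matrix factorization established in Part 1.
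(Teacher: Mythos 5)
The paper presents Theorem~\ref{thm:cpstuff} as standard background on cp maps, pointing to \cite{Pau02} at the start of that preliminaries subsection, and gives no proof of its own --- so there is no authorial argument to compare against; you are supplying a proof where the authors supply a citation. That said, your argument is correct and is essentially the textbook one: the identity $C_\phi=\phi_n(ee^*)$ together with rank-one factorization gives Part~1; most of Part~2 and all of Part~3 are definitional bookkeeping once Part~1 and the definition of Choi matrix are in place; and the one genuinely analytic implication (ii)$\Rightarrow$(iv) is Arveson/Krein extension run through the positive functional $\omega(S)=\langle\phi_d(S)\hat e,\hat e\rangle$ on $M_d(\cS)$, where the paper's hypothesis that $\cS$ contain a positive definite $P$ is exactly what makes $I_d\otimes P$ an order unit so the extension theorem applies. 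One point to spell out when you write it up: the ``index permutation'' converting the psd matrix $T\in M_d(M_n)$ that represents the extended functional $\tilde\omega$ into a psd Choi matrix in $M_n(M_d)$ is, with the usual conventions, the tensor flip composed with an entrywise complex conjugate (equivalently a full transpose), not a pure relabelling. Both operations preserve positivity so nothing goes wrong, but the cleanest path --- which also makes the agreement with $\phi$ on $\cS$ transparent --- is to factor $T=\sum_\alpha t_\alpha t_\alpha^*$, reshape each $t_\alpha\in\C^d\otimes\C^n$ into a $d\times n$ matrix $V_\alpha$, unwind $\Phi(A)_{kl}:=\tilde\omega(E_{kl}\otimes A)$ to the Kraus form $\Phi(A)=\sum_\alpha\overline{V_\alpha}\,A\,V_\alpha^{T}$, and then simply apply Part~1 to $\Phi$.
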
 }

\begin{lem}
 The cp mapping $\phi:M_n\to M_d$ as in \eqref{eq:choi} is
 \ben[\rm(a)]
 \item
  unital (that is, $\phi(I_n)=I_d$) if and only if 
 \[
      \sum_{j} V_j^*V_j = I; \]
  \item
trace preserving if and only if 
\[ 
      \sum_{j} V_jV_j^* = I;\] 
\item  
trace non-increasing for positive semidefinite matrices
(i.e., $\tr(\phi(P))\leq\tr(P)$ for all positive semidefinite $P$)
if and only if 
\[ 
      \sum_{j} V_jV_j^* \preceq I. \] 
\een
\end{lem}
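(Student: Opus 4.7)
The plan is to verify each of the three equivalences by a direct computation using the representation $\phi(A)=\sum_j V_j^* A V_j$ together with the cyclic property of the trace and the nondegeneracy of the trace pairing $\langle A,B\rangle =\tr(AB)$ on $M_n$.

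For (a), I would simply evaluate $\phi$ at the identity: $\phi(I_n)=\sum_j V_j^* I_n V_j =\sum_j V_j^* V_j$, so $\phi(I_n)=I_d$ if and only if $\sum_j V_j^* V_j =I_d$. No further work is needed.

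For (b) and (c), the key observation is that for every $A\in M_n$,
\[
\tr\bigl(\phi(A)\bigr)=\sum_j \tr(V_j^* A V_j)=\sum_j \tr\bigl(A V_j V_j^*\bigr)=\tr\Bigl(A\sum_j V_j V_j^*\Bigr),
\]
by linearity and the cyclicity of trace. For (b), $\phi$ is trace preserving iff $\tr\bigl(A(\sum_j V_j V_j^* - I_n)\bigr)=0$ for all $A\in M_n$, and by nondegeneracy of the trace pairing this happens iff $\sum_j V_jV_j^* = I_n$. For (c), $\phi$ is trace non-increasing on positive semidefinite matrices iff $\tr\bigl(P(I_n-\sum_j V_jV_j^*)\bigr)\geq 0$ for every $P\succeq 0$, and this is equivalent to $I_n-\sum_j V_jV_j^*\succeq 0$ by the self-duality of the positive semidefinite cone with respect to the trace pairing.

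There is no real obstacle here; the argument is a routine calculation. The only step worth highlighting is the use of the cyclic identity $\tr(V^*AV)=\tr(AVV^*)$, which swaps the role of $\sum V_j^* V_j$ (appearing for the unital condition on inputs) with $\sum V_j V_j^*$ (appearing for conditions on outputs through the trace). This switch is the algebraic manifestation of the duality between the Stinespring/Kraus form and its adjoint.
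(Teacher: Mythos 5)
Your proposal is correct and follows essentially the same route as the paper: the paper proves part (c) by exactly the same trace-cyclicity computation and then the self-duality of the positive semidefinite cone, while leaving (a) and (b) as easy exercises that you have simply written out.
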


\begin{proof}
We prove (c) and leave items (a) and (b) as an easy exercise for the reader.
For $A\in M_n$, 
\[
\tr(\phi(A)) = \sum_j \tr (V_j^* A V_j) = \tr \big( A \sum_j V_jV_j^* \big).
\]
Hence the trace non-increasing property for $\phi$ is equivalent to
\[
\tr\big( P (I-\sum_j V_jV_j^*) \big) \geq 0
\]
for all positive semidefinite $P$, i.e., $I-\sum_j V_jV_j^*\succeq0$.
\end{proof}

\begin{prop} 
 The linear mapping $\phi:M_n\to M_d$ is
 \ben[\rm(a)]
 \item
  unital (that is, $\phi(I_n)=I_d$) if and only if its  Choi matrix $C$ satisfies 
 \[ 
  \sum_{j=1}^n C_{j,j} = I;\] 
  \item
trace preserving if and only if its  Choi matrix $C$ satisfies
\[ 
 (\tr(C_{i,j}))_{i,j=1}^n =I_n; 
\] 
\item 
trace non-increasing for positive semidefinite matrices
(i.e., $\tr(\phi(P))\leq\tr(P)$ for all positive semidefinite $P$)
if and only if 
\[ 
 (\tr(C_{i,j}))_{i,j}  \preceq I_n,
\] 
\een
where $C$ is the Choi matrix for $\phi$. 
\end{prop}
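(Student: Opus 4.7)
The plan is to verify each statement by a direct computation, exploiting linearity of $\phi$ and the fact that $\{E_{i,j}\}_{i,j=1}^n$ is a basis of $M_n$. No deep ideas are needed; in particular, no appeal to the preceding lemma (which was formulated for cp maps via Kraus operators) is required, since the present proposition concerns arbitrary linear maps.

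For (a), expand $I_n=\sum_{j=1}^n E_{j,j}$ and apply linearity: $\phi(I_n)=\sum_j \phi(E_{j,j})=\sum_j C_{j,j}$, so $\phi(I_n)=I_d$ iff $\sum_j C_{j,j}=I_d$. For (b), both $\tr\circ\phi$ and $\tr$ are linear functionals on $M_n$, so they coincide iff they agree on the basis $\{E_{i,j}\}$. Since $(\tr\circ\phi)(E_{i,j})=\tr(C_{i,j})$ and $\tr(E_{i,j})=\delta_{i,j}$, the condition is exactly $(\tr(C_{i,j}))_{i,j}=I_n$.

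For (c), set $M:=(\tr(C_{i,j}))_{i,j}\in M_n$. For $A=(a_{i,j})\in M_n$, linearity gives
$$\tr(\phi(A)) \;=\; \sum_{i,j} a_{i,j}\,\tr(C_{i,j}) \;=\; \sum_{i,j} a_{i,j} M_{i,j} \;=\; \tr(M^T A),$$
so $\tr(A)-\tr(\phi(A))=\tr\bigl((I_n-M^T)A\bigr)$. The trace non-increasing condition on PSD inputs is therefore equivalent to $\tr\bigl((I_n-M^T)P\bigr)\ge 0$ for every PSD $P\in M_n$, and by self-duality of the PSD cone this is equivalent to $I_n-M^T\succeq 0$, i.e.\ $M\preceq I_n$ (since, once $M$ is hermitian, $M$ and $M^T$ have identical spectra and hence identical ordering against $I_n$).

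The only delicate step is (c), which carries two mild technicalities: the bookkeeping of the transpose in $\sum a_{i,j}M_{i,j}=\tr(M^T A)$, and the implicit hermiticity of $M$ needed to make sense of $M\preceq I_n$. The latter is forced by the hypothesis: testing $\tr(\phi(P))\le\tr(P)$ on the rank-two PSD matrices $E_{i,i}+E_{j,j}\pm(E_{i,j}+E_{j,i})$ and $E_{i,i}+E_{j,j}\pm i(E_{i,j}-E_{j,i})$ yields $M_{j,i}=\overline{M_{i,j}}$. I regard this verification as the only real—and still minor—obstacle of the proof.
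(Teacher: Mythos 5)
Your proof is correct and follows essentially the same route as the paper's: parts (a) and (b) are the identical expansion in matrix units, and part (c) reduces the trace inequality to positivity of the pairing of $I_n-M$ against the PSD cone, which is exactly what the paper does via rank-one PSD tests (your appeal to self-duality of the PSD cone is just the packaged form of that argument). The one thing you do that the paper does not is spell out why $M=(\tr(C_{i,j}))_{i,j}$ is hermitian in the first place so that $M\preceq I_n$ is meaningful; the paper silently assumes this, so your extra paragraph closes a small gap rather than introducing a different method.
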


\begin{proof}
Statement (a) follows from
\[
\phi(I_n) =  \phi \big( \sum_{j=1}^n E_{j,j} \big) = \sum_{j=1}^n C_{j,j},
\]
where $C$ is the  Choi matrix for $\phi$. Here $E_{i,j}$ denote the matrix units, 

For (b), let $X=\sum_{i,j=1}^n \alpha_{i,j} E_{i,j}$. Then
\[
\begin{split}
\tr(X) & = \sum_{i=1}^n \alpha_{i,i} \\
\tr(\phi(X)) & = \sum_{i,j=1}^n \alpha_{i,j} \tr( C_{i,j} ).
\end{split}
\]
Since $\tr(\phi(X))=\tr(X)$ for all $X$, this linear system yields
$\tr(C_{i,j})=\delta_{i,j}$ for all $i,j$.

Finally, for statement (c), 
 if $\phi$ is trace non-increasing, choosing $X = x x^*$ a rank one matrix, $X=(x_i x_j),$ we find that
\[
  \sum x_i x_j \tr(C_{i,j})  = \tr(\phi(X)) \le \tr(X) = \sum x_i^2.
\]
Hence $I - ( \tr(C_{i,j}) )  \succeq 0.$
Conversely, if $I -( \tr(C_{i,j})) \succeq 0$, 
 then for any positive semidefinite rank one matrix $X$, the computation above shows that 
  $\tr(\phi(X)) \le \tr(X).$  Finally, use the fact that any 
 positive semidefinite  matrix is a sum of  rank one positive semidefinite matrices to complete the proof.
\end{proof}

The Arveson extension theorem \cite{Arv69} says that  any cp (resp. ucp)  map on an operator system extends to a cp (resp. ucp) map on the full algebra.  Example \ref{ex:no tracial extension} shows that a TPCP map need not extend to a TPCP map on the full algebra.

\subsection{Quantum Interpolation Problems and Semidefinite Programming}
\label{sec:interpAlgor}\label{subsec:cpInterpol}
\label{subsec:interp}

\def\jus{ \noindent {\bf Justification.}}

\def\bs{\bigskip}

The cp interpolation problem is formulated as follows.
Given $A^1 \in \mbS_n^g$ and given $A^2$ in $\mbS_m^g$,
does there exist a cp map $\Phi: M_n \to M_m $ such that 
\[ 
 A^2_\ell= \Phi(A^1_\ell) \quad\text{ for }\quad \ell=1, \ldots, g?
\] 
One can require further  that 
\ben 
\item
$\Phi$ be \df{unital}, or
\item
$\Phi$ be \df{trace preserving}, or
\item
$\Phi$ be \df{trace non-increasing} in the sense that
$\tr(\Phi(P))\leq\tr(P)$ for positive semidefinite $P$.
\een

Our solutions to these interpolation problems  are formulated as concrete LMIs that can be solved with a standard semidefinite programming (SDP) solver.  They are equivalent to, but stated quite differently than, the earlier results in \cite{AGprept}.

\begin{theorem}
\label{thm:interp}
Suppose, for $\ell=1,\ldots,g$ the matrices  $A^1_\ell\in\mbS_n$ and  $A^2_\ell\in\mbS_m$ are symmetric. 
Let $\alpha_{p,q}^\ell$ denote the $(p,q)$ entry of
$A^1_\ell$.

There exists a cp map $\Phi:M_n\to M_m$ that solves the interpolation problem
\[\Phi( A^1_\ell ) = A^2_\ell,
\quad \ell=1, \cdots, g
\] 
if and only if 
the following feasibility semidefinite programming problem has a solution: 
\beq
\label{eq:tausdp}
(C_{p,q})_{p,q=1}^{n} := 
C\succeq0,
 \qquad 
\qquad
\forall \ell=1,\ldots,g: \; 
\sum_{p,q}^{n}  \alpha_{p,q}^\ell
C_{p,q}=  A^2_\ell,
\eeq
 for the unknown $mn\times mn$ symmetric matrix $C=(C_{p,q})_{p,q=1}^n$ consisting of $m\times m$ blocks $C_{p,q}$. 
Furthermore,
\ben[\rm(1)]
\item
 the map $\Phi$ is unital 
 if and only if in addition to \eqref{eq:tausdp}
 \beq
 \label{eq:unitaltr}
\sum_{p=1}^{n} C_{p,p} = I_{m};
 \eeq
 \item
 the map $\Phi$ is a quantum channel
 if and only if in addition to \eqref{eq:tausdp}
 \beq 
  \label{eq:unitaltr2}
 (\tr (C_{p,q}))_{p,q} = I_n; 
 \eeq
 \item  
  the map $\Phi$ is a quantum operation
 if and only if, in addition  to \eqref{eq:tausdp},
 \beq 
  \label{eq:unitaltr3}
( \tr (C_{p,q}))_{p,q} \preceq I_n.
 \eeq
\een
  In each case the constraints on $C$ are LMIs, and
  the set of  solutions $C$
constitute  a bounded spectrahedron.
       \end{theorem}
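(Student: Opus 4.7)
The plan is to reduce everything to the Choi-matrix perspective set up in Theorem \ref{thm:cpstuff} and the proposition translating the unital/trace-preserving/trace-non-increasing conditions into equations on the Choi matrix. Concretely, I will identify a candidate $\Phi$ with its Choi matrix $C=(C_{p,q})_{p,q=1}^n\in M_n(M_m)$ via $\Phi(E_{p,q})=C_{p,q}$, and then rewrite each of the constraints in the statement as a linear equation or inequality in the entries of $C$. The positive semidefiniteness condition $C\succeq 0$ is exactly the hypothesis in Theorem \ref{thm:cpstuff} guaranteeing that $\Phi_C$ is cp.

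First I would handle the interpolation conditions $\Phi(A_\ell^1)=A_\ell^2$. Writing $A_\ell^1=\sum_{p,q}\alpha_{p,q}^\ell E_{p,q}$ and using linearity,
\[
\Phi(A_\ell^1)=\sum_{p,q}\alpha_{p,q}^\ell\,\Phi(E_{p,q})=\sum_{p,q}\alpha_{p,q}^\ell\,C_{p,q},
\]
so the interpolation demand is precisely the linear equation $\sum_{p,q}\alpha_{p,q}^\ell C_{p,q}=A_\ell^2$ appearing in \eqref{eq:tausdp}. Conversely, any $C\succeq 0$ satisfying this system determines, via $\Phi_C(E_{p,q})=C_{p,q}$, a cp map solving the interpolation problem by Theorem \ref{thm:cpstuff}(a). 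This proves the main equivalence.

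Next I would dispose of items (1)--(3) by quoting the proposition preceding the theorem: it already translates ``unital'', ``trace preserving'' and ``trace non-increasing'' into the Choi-matrix identities $\sum_p C_{p,p}=I_m$, $(\tr(C_{p,q}))_{p,q}=I_n$, and $(\tr(C_{p,q}))_{p,q}\preceq I_n$, respectively. Adding these to the system \eqref{eq:tausdp} gives \eqref{eq:unitaltr}, \eqref{eq:unitaltr2}, \eqref{eq:unitaltr3} and, together with $C\succeq 0$, constitute LMIs in the entries of $C$.

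Finally, boundedness. The statement that each feasible set is a bounded spectrahedron reduces to bounding the diagonal blocks $C_{p,p}\succeq 0$; the off-diagonal blocks are then controlled because $C\succeq 0$ forces each $2\times 2$ principal block to be positive semidefinite, yielding $\|C_{p,q}\|^2\le\|C_{p,p}\|\,\|C_{q,q}\|$. In case (1), $\sum_p C_{p,p}=I_m$ with $C_{p,p}\succeq 0$ gives $C_{p,p}\preceq I_m$; in cases (2) and (3), $\tr(C_{p,p})\le 1$ gives $C_{p,p}\preceq I_m$; in each instance the norm of $C$ is then uniformly bounded. I expect the main (very mild) obstacle is keeping the bookkeeping between the matrix-unit expansion of $A_\ell^1$ and the block indexing of $C$ consistent, and remembering that the boundedness claim is being made only in the three special cases where an explicit normalization of the $C_{p,p}$ is enforced.
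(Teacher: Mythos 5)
Your proof is correct and follows essentially the same route as the paper: both arguments translate the interpolation and normalization constraints into linear conditions on the Choi matrix and invoke Theorem \ref{thm:cpstuff} (the paper phrases this via a cp extension of $\phi|_{\cS}$ where $\cS=\Span\{A^1_\ell\}$, you phrase it directly via $\Phi_C$, but these are the same argument). Your boundedness argument is slightly more careful than the paper's terse remark, correctly noting that $C\succeq 0$ together with control of the diagonal blocks bounds the off-diagonal blocks as well.
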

  
  \begin{rem} 
In the unital case the obtained  spectrahedron is free.
Namely, for fixed $A^1\in\smatng$, 
     the sequence of solution sets 
     to \eqref{eq:tausdp} and \eqref{eq:unitaltr}
     parametrized over $m$ is a free spectrahedron.
     See Proposition \ref{prop:liftPolar} for details.
  In the  two quantum cases, for each $m$, the solutions $\cD(m)$ at level $m$ form a spectrahedron, but the sequence $\cD=(\cD(m))_m$ is
  in general not a free spectrahedron since it fails to respect direct sums.
  \end{rem}
 
\def\bep{\begin{proof}}
\def\eep{\end{proof}}

\bep 
This interpolation result is a consequence of Theorem \ref{thm:cpstuff}.
 Let $\cS$ denote the span of $\{A^1_\ell\}$  and
  $\phi$ the mapping from $\cS$ to $M_m$ defined by
  $\phi(A^1_\ell)=A^2_\ell$.  This mapping has a completely positive extension
 $\Phi:M_n\to M_m$ 
  if and only if it has a positive semidefinite Choi matrix. The conditions on 
  $C$ evidently are exactly those needed to say that $C$ is a positive semidefinite Choi
  matrix for $\phi$.

The additional conditions  in
\eqref{eq:unitaltr} and   \eqref{eq:unitaltr2}
  (i.e., $\phi(I_n)=I_m$ and  trace preservation)
are clearly linear, so produce a spectrahedron in $\mbS_{mn}$.  
Both spectrahedra are bounded. Indeed, in each case $C_{p,p}\preceq I_m$, 
so $C\preceq I_{mn}$. Likewise, the additional condition 
in \eqref{eq:unitaltr3} is an LMI constraint, producing a bounded spectrahedron.
  \eep

We note that cp maps between 
subspaces of matrix algebras in the absence of positive definite elements were
treated in \cite[Section 8]{HKN+}; see also \cite{KS13,KTT13}.

\section{Free Spectrahedrops and Polar Duals} 
\label{sec:pdual4}

 This section starts by recalling the definition of a  free spectrahedrop as the 
 coordinate projection of a spectrahedron. It then continues
  with a review of free polar duals \cite{EW97}
 and their basic properties  before turning to 
 two main results, stated now without technical hypotheses.
 Firstly,  
 a free convex set is, in a canonical sense, generated
 by a finite set (equivalently a single point) if and only if 
 it is the polar dual of a free spectrahedron (Theorem \ref{thm:polarLMI}). 
  Secondly, 
 the polar dual of a free spectrahedrop is again a free spectrahedrop (Theorem \ref{thm:polarPolar}).

\subsection{Projections of Free Spectrahedra: Free Spectrahedrops}\label{sec:4}

 Let $L$ be a linear pencil in the variables $(x_1,\dots,x_g;y_1,\dots,y_h)$. Thus, 
  for some $d$ and $d\times d$ hermitian matrices $D,\Omega_1,\dots,\Omega_g,\Gamma_1,\dots,\Gamma_h$, 
\[ 
 L(x,y) = D + \sum_{j=1}^g \Omega_j x_j +\sum_{\ell=1}^h \Gamma_\ell y_\ell.
\] 
 The set
\[
 \proj_x \cD_L(1) = \{x\in\mathbb R^g: \exists\, y\in\mathbb R^h \mbox{ such that }
    L(x,y) \succeq 0\}
\]
 is known as a \df{spectrahedral shadow} or a \df{semidefinite programming (SDP) representable set} \cite{BPR13}
  and the representation afforded by $L$ is an \df{SDP representation}.  SDP representable sets are evidently
 convex and 
 lie in a middle ground between LMI representable sets and general convex sets. 
 They play an important role in convex optimization \cite{Ne06}. 
 In the case that $S\subset \mathbb R^g$ is closed  semialgebraic and satisfies  some mild
 additional hypothesis, it is proved in \cite{HN10} based upon the Lasserre--Parrilo construction 
 (\cite{Las09,Par06})
that the convex hull of $S$ is SDP representable.

  Given a linear pencil $L$, let $\proj_x \cD_L=(\proj_x \cD_L(n))_n$ denote the free set 
\[
 \proj_x \cD_L (n) =\{X\in\smatng : \exists\, Y\in\mathbb S_n^h
  \mbox{ such that } L(X,Y)\succeq 0\}.
\]
  We call a set of the form $\proj_x\cD_L$ a \df{free spectrahedrop}
  and  $\cD_L$ an {\bf LMI lift} 
  of $\proj_x \cD_L$.
  Thus a free spectrahedrop is  a coordinate projection of a free spectrahedron.
    Clearly, free spectrahedrops are matrix convex. In particular, they  are closed with respect to restrictions to reducing subspaces. 

\def\calK{\mathcal K}

\begin{lem} [\protect{\cite[\S4.1]{Lasse}}]
\label{lem:boundedmonicLift}
 If $\calK =\proj_x \cD_L$  
 is a free spectrahedrop  containing $0\in\R^g$ in the interior of $\calK(1)$,
 then there exists a monic 
 linear pencil $\mL(x,y)$ such that 
\[ 
\calK= \proj_x \cD_{\mL} = \{ X\in\smatg : \exists Y\in\smath: \, \mL(X,Y)\succeq0\}.
\] 
If, in addition, $\cD_L$ is bounded, then 
 we may further ensure $\cD_{\mL}$ is bounded.
\end{lem}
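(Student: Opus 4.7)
The plan is to convert $L$ into a monic pencil via two maneuvers: a reduction to a pencil $L'$ whose value at the origin is strictly positive definite, followed by a conjugation that normalizes the constant term to the identity. The second maneuver is cosmetic; the first is where the interior-point hypothesis $0\in\inter\calK(1)$ is essential.

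For the reduction, choose $y^0\in\R^h$ maximizing $\rank L(0,y)$ subject to $L(0,y)\succeq 0$ (feasible since $0\in\calK(1)$), let $K:=\ker L(0,y^0)$, and block-decompose
\[
L(x,y)=\begin{pmatrix} A(x,y) & B(x,y) \\ B(x,y)^* & C(x,y) \end{pmatrix}
\]
with respect to $K\oplus K^\perp$. A standard convexity-and-rank-maximality argument first shows that $K\subseteq\ker L(0,y)$, i.e.\ $A(0,y)=0$ and $B(0,y)=0$, for every feasible $y$. Then, using $0\in\inter\calK(1)$ to produce $-\epsilon x\in\calK(1)$ for small $\epsilon>0$ and averaging the pair $(x,y)$ with a lift of $-\epsilon x$ to $\cD_L(1)$, one deduces that $A$ and $B$ vanish on all of $\cD_L(1)$, hence identically on its affine span. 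Reparametrizing the $y$-variables to stay on this affine zero-set, placing the new origin at $y^0$, and restricting $L$ to $K^\perp$ produces a pencil $L'$ with $\proj_x\cD_{L'}=\calK$ and $L'(0,0)\succ 0$.

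Setting $R:=L'(0,0)^{-1/2}$ and $\mL(x,y):=(R\otimes I)\,L'(x,y)\,(R\otimes I)$ yields a monic pencil with $\cD_{\mL}=\cD_{L'}$, so $\proj_x\cD_{\mL}=\calK$. Every manipulation above (translation in $y$, reparametrization, restriction to $K^\perp$, conjugation by an invertible matrix) preserves boundedness of the underlying spectrahedron, so if $\cD_L$ is bounded then so is $\cD_{\mL}$. The main technical obstacle is the claim that $K$ is a kernel direction for $L$ at every point of $\cD_L(1)$, not only at $(0,y^0)$; this is precisely where the interior-point hypothesis $0\in\inter\calK(1)$ does its work, via the symmetric availability of both $x$ and $-\epsilon x$ in $\calK(1)$ for small $\epsilon>0$.
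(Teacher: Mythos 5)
Your reduction strategy — choose $y^0$ maximizing $\rank L(0,y)$ among feasible $y$, identify $K=\ker L(0,y^0)$, use the interior hypothesis plus averaging to force $A$ and $B$ to vanish, reparametrize and restrict to $K^\perp$, then conjugate by $L'(0,0)^{-1/2}$ — is the standard and correct approach, and it appears to match the cited reference. The kernel-intersection argument (convex combination of PSD matrices with positive weights has kernel equal to the intersection of the kernels) is used correctly, and the observation that translation, reparametrization, restriction, and conjugation by an invertible matrix all preserve boundedness of the spectrahedron is sound.

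However, one step is asserted without justification and is genuinely needed: the claim that the reparametrized pencil $L'$ satisfies $\proj_x\cD_{L'}=\calK$ \emph{at every matrix level $n$}. You have established that the blocks $A$ and $B$ vanish on $\cD_L(1)$ and hence on its affine span $V\subseteq\R^{g+h}$; this makes $A$, $B$ vanish identically as affine functions of the reparametrized variables $(x,z)$, so the forward inclusion $\proj_x\cD_{L'}\subseteq\calK$ is clear. But for the reverse inclusion you need that every $(X,Y)\in\cD_L(n)$, $n\geq 1$, lies in the $n$-th matrix amplification of $V$, i.e.\ that the affine relations cutting out $V$ are satisfied by $\cD_L(n)$ for all $n$, not merely $n=1$. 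This does hold: if $\ell$ is an affine functional with $\ell|_{\cD_L(1)}=0$ and $(X,Y)\in\cD_L(n)$, then for every unit vector $v\in\C^n$ one has $(v^*Xv,v^*Yv)\in\cD_L(1)$, hence $v^*\ell(X,Y)v=\ell(v^*Xv,v^*Yv)=0$, and since $\ell(X,Y)$ is hermitian it must vanish. The paper itself invokes this fact elsewhere (in the proof of Corollary~\ref{for:strata}, citing \cite[Lemma~3.3]{Lasse}). Without this step the backward inclusion $\calK(n)\subseteq\proj_x\cD_{L'}(n)$ is unproved for $n>1$; once supplied, the argument is complete.
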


If the free spectrahedrop $\calK$ is closed and bounded, and contains $0$ in its interior, then 
there is a monic linear pencil $\mL$ such that $\cD_{\mL}$ is bounded and $\calK =\proj_x \cD_{\mL}$. See Theorem \ref{thm:polarPolar}. 

Let $p=1-x_1^2-x_2^4$.  It is well known that $\cD_p(1)=\{(x,y)\in\mathbb R^2: 1-x_1^2 - x_2^4 \ge 0\}$ is a spectrahedral shadow.  On the other hand, $\cD_p(2)$ is not convex (in the usual sense) and hence $\cD_p$ is not a spectrahedrop.  Further details can be found in Example \ref{ex:btv}.

\subsection{Basics of Polar Duals}
 \label{sec:free polar dual basics}

By precise analogy with the classical $\RR^g$ notion, the \df{free polar dual} 
$\calK^\circ =(\calK^\circ(n))_n$ of a free set $\calK\subset\smatg$ is 
\[ 
 \calK^\circ(n):=  \{ A \in \smatng : \
 \mL_A(X)=  I \otimes I - \sum_j^g A_j \otimes X_j \succeq 0
\text{ for all } X  \in \calK \}.
\]

Given $\eps>0$, consider the free $\epsilon$ ball centered at $0$,  
\[
\cN_\eps:= \{ X\in \smatg : \|X\|\leq\eps\} = \Big\{X : \eps^2 I \succeq \sum_j X_j^2 \Big\}.
\]
It is easy to see that its 
 polar dual 
 is bounded. In fact,
\[
\cN_{\frac1{g\eps}} \subseteq
\cN_\eps^\circ\subseteq
\cN_{\frac{\sqrt g}{\eps}}.
\]

  We say that  \df{$0$ is in the interior} of the subset $\Gamma \subset \smatg$ if 
  $\Gamma$ contains some free $\epsilon$ ball centered at $0$.

\begin{lem}\label{lem:interiorSame}
Suppose $\calK\subset\smatg$ is matrix convex. The following are equivalent.
\ben[\rm(i)]
\item
$0\in\R^g$ is in the interior of $\calK(1)$;
\item
$0\in\smatng$ is in the interior of $\calK(n)$ for some $n$;
\item
$0\in\smatng$ is in the interior of $\calK(n)$ for all $n$;
\item
$0$ is in the interior of $\calK$.
\een
\end{lem}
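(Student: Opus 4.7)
My plan is to prove the cycle (iv) $\Rightarrow$ (iii) $\Rightarrow$ (ii) $\Rightarrow$ (i) $\Rightarrow$ (iv), with essentially all the content in the last step. The first implication is immediate because $\cN_\eps(n)$ already contains the ordinary open ball $\{X\in\smatng : \|\sum X_j^2\|<\eps^2\}$ around $0$, and (iii) $\Rightarrow$ (ii) is trivial.

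For (ii) $\Rightarrow$ (i), I will use scalar inflation followed by vector compression. If some ordinary $\eps$-ball about $0$ in $\smatng$ sits inside $\calK(n)$, then for any $x\in\R^g$ with $|x|<\eps$ the tuple $xI:=(x_1 I_n,\ldots,x_g I_n)$ has $\sum_j(x_j I_n)^2 = |x|^2 I_n$, hence lies in $\calK(n)$. Compressing by any unit vector $v\in\C^n$, viewed as an $n\times 1$ isometry, yields $v^*(xI)v = x\in\calK(1)$, so the Euclidean ball $B_\eps(0)\subseteq\calK(1)$.

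The main step is (i) $\Rightarrow$ (iv). Assuming $B_\eps(0)\subseteq\calK(1)$, I will prove the uniform statement $\cN_{\eps/g}\subseteq\calK$, which gives (iv) in one shot. Given $X\in\cN_{\eps/g}(n)$, each $X_j$ satisfies $X_j^2\preceq\sum_k X_k^2\preceq(\eps/g)^2 I$, so $\|gX_j\|\le \eps$. For each $j$, let $Z^j$ denote the ``axial'' tuple with $gX_j$ in slot $j$ and zeros elsewhere. Diagonalize $gX_j=U_j D_j U_j^*$ and apply simultaneous unitary conjugation by $U_j$: this sends $Z^j$ to a tuple whose $j$th component is $D_j$ and the rest are zero. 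As a direct sum, over the diagonal positions $k$, of scalar tuples $(0,\ldots,d^j_k,\ldots,0)\in B_\eps\subseteq\calK(1)$, this diagonal tuple lies in $\calK(n)$ by closure under direct sums; simultaneous unitary conjugation then places $Z^j\in\calK(n)$. Matrix convex sets are levelwise convex (average $X\oplus Y$ by the $2n\times n$ isometry with blocks $\sqrt{t}\,I_n$ and $\sqrt{1-t}\,I_n$), so $X=\tfrac{1}{g}\sum_{j=1}^g Z^j \in\calK(n)$, and this holds at every level $n$.

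The principal obstacle is that the components $X_1,\ldots,X_g$ of a tuple in $\cN_{\eps/g}(n)$ cannot in general be simultaneously diagonalized, which would otherwise reduce the question to the scalar case on each diagonal entry. My workaround is to express $X$ as an average of $g$ axial tuples, each of which admits the required simultaneous diagonalization; this costs a factor of $g$ in radius, consistent with the paper's remark $\cN_{1/(g\eps)}\subseteq\cN_\eps^\circ$ regarding polar duals of free balls.
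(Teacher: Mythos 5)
Your proof is correct and takes essentially the same route as the paper: reduce (ii)~$\Rightarrow$~(i) by compressing scalar-times-identity tuples to level $1$, and prove (i)~$\Rightarrow$~(iv) by writing a general $X$ as the average $\tfrac1g\sum_j Z^j$ of axial tuples, each of which lies in $\calK$ by diagonalization, direct sums of scalar points, and unitary conjugation. The only deviation is cosmetic: the paper inscribes the cube $[-\eps/g,\eps/g]^g$ in the ball and therefore lands on the constant $\eps/g^2$, whereas you use only the axial segments of radius $\eps$ and get the sharper $\eps/g$; since the lemma is qualitative this buys nothing, but it is a cleaner accounting.
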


\begin{proof}
It is clear that (iv) $\Rightarrow$ (iii) $\Rightarrow$ (ii).
Assume (ii) holds. There is an $\eps>0$ with $\cN_{\eps}(n)\subseteq\calK(n)$. 
Since $\calK$ is closed with respect to restriction to reducing subspaces, and
\[
\cN_{\eps}(1)\oplus \cdots \oplus \cN_{\eps}(1) \subseteq\cN_{\eps}(n),
\]
we see $\cN_{\eps}(1)\subseteq\calK(1)$, i.e., (i) holds.

Now suppose (i) holds, i.e., $\cN_{\eps}(1)\subseteq \calK(1)$ for some $\eps>0$.
We claim that $\cN_{\eps/g^2}\subseteq\calK$. 
Let $X\in\cN_{\eps/g^2}$ be arbitrary.
It is clear that
\[
\left[- \frac{\eps}g,\frac{\eps}g\right]^g\subseteq\calK(1),
\]
hence  $\left[ -\eps/g,\eps/g\right]^g\otimes I_n\subseteq\calK(n)$.
Since each $X_j$ has norm $\leq\eps/g^2$, matrix convexity of $\calK$ implies that
\[
(0,\ldots,0,g X_j,0,\ldots,0)\in\calK
\]
and thus
\[
X=
\frac1g 
\big( (gX_1,0,\ldots,0) + \cdots + (0,\ldots,0,gX_g)\big)
\in\calK.\qedhere
\]
\end{proof}

 For the readers' convenience, the following proposition lists some properties of $\calK^\circ.$
 The bipolar result of item \eqref{it:polarpolar} is due to \cite{EW97}. 
 Given $\Gamma_\alpha$, a collection of matrix convex sets, it is readily verified that
  $\Gamma=(\Gamma(n))_n$ defined by $\Gamma(n) = \bigcap_{\alpha} \Gamma_\alpha(n)$ is again
  matrix convex. Likewise, if $\Gamma$ is matrix convex, then so is its closure
  $\overline{\Gamma} = (\overline{\Gamma(n)})_n$.
 Given a subset $\calK$ of $\smatg$, 
 let $\mco \calK$ denote the intersection of all matrix convex sets
  containing $\calK$. Thus, $\mco \calK$ is the smallest matrix convex set containing $\calK$.
  Likewise, $\cmco \calK= \overline{\mco \calK}$ is the smallest closed matrix convex set 
  containing $\calK$.   Details, and an alternate characterization of the matrix 
  convex hull of a free set $\calK$,  can be found in \cite{Lasse}.

\begin{prop}
 \label{prop:poDual}
Suppose $\calK\subset\smatg$.
\ben    [  \rm (1)]
\item
\label{it:mcon}
$\calK^\circ$ is a  closed matrix convex set containing $0;$
\item
 \label{it:bounded}
   if $0$ is in the interior of $\calK,$ then $\calK^\circ$ is bounded;
\item
\label{it:circ}
  $\calK(n) \subset \calK^{\circ\circ} (n)$  for all $n;$ that is, $\calK\subset \calK^{\circ\circ}$;
\item
 \label{it:bounded2}
  $\calK$ is bounded if and only if
$0$ is in the interior of $\calK^\circ$;
\item
\label{it:circconv}
 if there is an $m$ such that $0\in \calK(m)$, then $\calK^{\circ\circ}  = \cmco \calK;$ 
\item
 \label{it:polarpolar}
   if $\calK$ is a closed matrix convex set containing $0$, then $\calK=\calK^{\circ\circ};$ and
\item
 \label{it:Kcirc1vsK1circ}
   if $\calK$ is matrix convex, then  $\calK(1)^\circ= \calK^\circ(1)$. 
\een
\end{prop}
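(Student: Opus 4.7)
The plan is to verify the two inclusions separately; only the reverse direction requires matrix convexity.

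The forward inclusion $\calK^\circ(1) \subseteq \calK(1)^\circ$ is immediate from the definition of the free polar dual. If $a \in \calK^\circ(1) \subset \R^g$, then $I - \sum_j a_j X_j \succeq 0$ for every $X \in \calK$ at every level. Specializing to $X = x \in \calK(1)$ yields $1 - \sum_j a_j x_j \ge 0$, placing $a$ in the classical polar $\calK(1)^\circ$.

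For the reverse inclusion $\calK(1)^\circ \subseteq \calK^\circ(1)$, I fix $a \in \calK(1)^\circ$, so that $1 - \sum_j a_j x_j \ge 0$ for every $x \in \calK(1)$, and aim to show $I_n - \sum_j a_j X_j \succeq 0$ for every $n \ge 1$ and every $X \in \calK(n)$. The key observation is that a unit vector $v \in \C^n$, viewed as an $n \times 1$ matrix, is exactly an isometry $\C \to \C^n$. Since $\calK$ is matrix convex, it is closed under simultaneous conjugation by isometries, so the tuple $v^*Xv := (v^*X_1 v, \dots, v^*X_g v) \in \R^g$ lies in $\calK(1)$. Applying the hypothesis on $a$ to this tuple gives
\[
v^*\Bigl(I_n - \sum_j a_j X_j\Bigr) v \;=\; 1 - \sum_j a_j (v^* X_j v) \;\ge\; 0.
\]
Ranging over all unit vectors $v \in \C^n$ yields $I_n - \sum_j a_j X_j \succeq 0$, as required.

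I do not anticipate a genuine obstacle; the argument amounts to unpacking the definition of matrix convexity together with the elementary identification of unit column vectors with isometries $\C \to \C^n$. The only conceptual point worth flagging is that this is precisely the step at which \emph{matrix} convexity, as opposed to merely levelwise convexity, is used: closure under isometric conjugation is exactly what reduces the level-$n$ positive semidefiniteness condition to the scalar inequality at level $1$.
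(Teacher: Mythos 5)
Your argument for item (\ref{it:Kcirc1vsK1circ}) is correct and is essentially the same as the paper's: the forward inclusion is immediate by evaluating at scalar points, and the reverse inclusion follows by conjugating a tuple $X\in\calK(m)$ by a unit column vector $v$ (an isometry $\C\to\C^m$), so that $v^*Xv\in\calK(1)$ by matrix convexity and $v^*\bigl(I-\sum_j a_j X_j\bigr)v\ge 0$, exactly the step the paper uses. Note, however, that you have only addressed item (\ref{it:Kcirc1vsK1circ}) of the seven-part proposition; if the whole statement was the target, items (\ref{it:mcon})--(\ref{it:polarpolar}) — including the Effros--Winkler bipolar argument for (\ref{it:circconv}) — remain to be supplied.
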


\begin{proof}
Matrix convexity in \eqref{it:mcon} is straightforward.

If $\calK$ has $0$ in its interior, then
there is a small free neighborhood $\cN_\eps$ of $0$ inside $\calK$.  Hence
$\calK^\circ\subseteq \cN_\eps^\circ=\cN_{1/{\eps}}$ is bounded.

Item \eqref{it:circ}
 is a tautology. Indeed,
if $X \in \calK(n)$, then 
we want to show $\mL_X(A)  \succeq 0 $ 
whenever  $\mL_A(Y) \succeq 0 $ for all $Y$ in $\calK$.
But this  follows simply from the fact that $\mL_X(A)$ and $\mL_A(X)$
 are unitarily equivalent.

If $\calK$ is bounded, then it is evident that $0$ is in the interior of $\calK^\circ$.
 If $0$ is in the interior of $\calK^\circ$, then, by item \eqref{it:bounded}, $\calK^{\circ\circ}$ is bounded.
 By item \eqref{it:circ}, $\calK\subset \calK^{\circ\circ}$  and thus $\calK$ is bounded.

To prove \eqref{it:circconv}, first note that $0\in \cmco\calK(m)$ and
  since $\cmco\calK(m)$ is matrix convex, $0\in\cmco\calK(1)$. Now 
suppose 
$W \not \in  \cmco \calK$. The Effros-Winkler 
matricial Hahn-Banach Theorem \ref{prop:sharp}
produces a monic linear pencil $\mL_A$ (with the size of $A$ no larger than the size of $W$)
separating $W$ from $\cmco \calK$; that is,  $\mL_A(W) \not \succeq 0$
and $\mL_A(X)  \succeq 0$ for $X \in \mco \calK.$
 Hence  $A \in \calK^\circ$. 
 Using  the unitary equivalence of $\mL_W(A)$ and $\mL_A(W)$ it follows that $\mL_W(A)\not \succeq 0,$ 
 and thus $W\notin \calK^{\circ\circ}.$  Thus, $\calK^{\circ\circ}\subset \cmco \calK$. 
 The reverse inclusion follows from item \eqref{it:circ}.

 Finally, suppose $\calK$ is matrix convex and $y\in \calK(1)^\circ$. Thus, $\sum y_j x_j = \langle y,x\rangle \le 1$ for
  all $x\in \calK(1)$.  Given $X\in \calK(m)$ and a unit vector $v\in\C^m$, since $v^*Xv\in \calK(1)$,
\[
  1\ge   \sum y_j v^*X_j v.
\]
 Hence,
\[
 v^*\big(I-\sum y_j X_j\big)v \ge 0
\]
 for all unit vectors $v$. So $y \in \calK^\circ(1)$. 
  The reverse inclusion is immediate. 
\end{proof}

\begin{cor}
If $\calK\subset\smatg$, then $\calK^{\circ\circ}=\cmco \big(\calK\cup\{0\}\big)$.
Here $0\in\mathbb R^g$. 
\end{cor}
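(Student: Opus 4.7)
The plan is to reduce to item \eqref{it:circconv} of Proposition \ref{prop:poDual} by observing that attaching the origin to $\calK$ changes neither the polar dual nor the bipolar.

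First, I would set $\tilde{\calK} = \calK \cup \{0\}$ (where the $0$ is placed at level $1$, so in particular $0 \in \tilde{\calK}(1)$) and prove the auxiliary identity $\calK^\circ = \tilde{\calK}^\circ$. One inclusion is trivial from $\calK \subseteq \tilde{\calK}$. For the other inclusion, if $A \in \calK^\circ(n)$, then $\mL_A(X) \succeq 0$ for every $X \in \calK$; since also $\mL_A(0) = I \otimes I \succeq 0$ holds tautologically for every $A$, the defining condition for $\tilde{\calK}^\circ$ is automatic at the extra point $X = 0$, so $A \in \tilde{\calK}^\circ(n)$.

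Next, I would apply Proposition \ref{prop:poDual}\eqref{it:circconv} to $\tilde{\calK}$, which is legitimate because $0 \in \tilde{\calK}(1)$. This gives
\[
\tilde{\calK}^{\circ\circ} \;=\; \cmco \tilde{\calK} \;=\; \cmco(\calK \cup \{0\}).
\]
Taking polar duals once more in the auxiliary identity then yields
\[
\calK^{\circ\circ} \;=\; (\calK^\circ)^\circ \;=\; (\tilde{\calK}^\circ)^\circ \;=\; \tilde{\calK}^{\circ\circ} \;=\; \cmco(\calK \cup \{0\}),
\]
which is the claim.

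I do not foresee any substantive obstacle: the corollary is essentially a bookkeeping consequence of item \eqref{it:circconv}, with the only observation of content being that the origin is always a ``free'' point for the polar dual constraint, hence may be adjoined to $\calK$ without affecting $\calK^\circ$.
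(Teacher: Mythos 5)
Your proof is correct and follows essentially the same route as the paper's: both note that adjoining $0$ leaves the polar dual unchanged (since $\mL_A(0)=I\otimes I\succeq 0$ for every $A$), and then apply item \eqref{it:circconv} of Proposition~\ref{prop:poDual} to $\calK\cup\{0\}$.
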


\begin{proof}
 Note that $\calK^\circ = (\calK \cup\{0\})^\circ$ and hence,
\[
 \calK^{\circ\circ} = (\calK \cup\{0\})^{\circ\circ}. 
\]
   By item \eqref{it:circconv} of Proposition \ref{prop:poDual},
\[
  \cmco \big(\calK\cup\{0\}\big) = (\calK\cup\{0\})^{\circ\circ}. \qedhere
\]
\end{proof}

\begin{lem}\label{lem:projDual}
Suppose $\calK\subseteq\mbS^{g+h}$, and consider its image $\proj \calK\subseteq\smatg$
under the projection $\proj:\mbS^{g+h}\to\smatg$. A tuple
$A\in\smatg$ is in $(\proj\calK)^\circ$ if and only if $(A,0)\in\calK^\circ$.
\end{lem}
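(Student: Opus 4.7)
The plan is to unpack the definitions on both sides of the claimed equivalence and observe that they coincide, because zero coefficients on the auxiliary $y$-variables make the associated pencil depend only on the $x$-variables.

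First, I would write an element of $\calK$ as $(X,Y)$ with $X\in\smatg$ and $Y\in\smath$, so that $\proj(X,Y)=X$ and $X\in\proj\calK$ iff there exists $Y$ with $(X,Y)\in\calK$. By the definition of the polar dual recalled in Subsection \ref{sec:free polar dual basics}, membership $A\in(\proj\calK)^\circ$ is precisely the condition
\[
I\otimes I-\sum_{j=1}^{g} A_j\otimes X_j\succeq 0 \qquad\text{for every } X\in\proj\calK,
\]
equivalently, for every $X$ such that $(X,Y)\in\calK$ for some $Y$.

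Next, unpacking the membership of the padded tuple $(A,0)\in\mbS^{g+h}$ in $\calK^\circ$, the defining condition reads
\[
I\otimes I-\sum_{j=1}^{g} A_j\otimes X_j-\sum_{\ell=1}^{h} 0\otimes Y_\ell\succeq 0 \qquad\text{for every } (X,Y)\in\calK,
\]
which collapses to $I\otimes I-\sum_{j=1}^{g} A_j\otimes X_j\succeq 0$ for every $(X,Y)\in\calK$.

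The key observation is that this inequality depends only on $X$ and the coefficients $A_j$, with no reference to $Y$. Consequently, quantifying over all $(X,Y)\in\calK$ imposes exactly the same family of constraints on $A$ as quantifying over $X\in\proj\calK$, and the two conditions are literally the same. I do not anticipate any real obstacle: the lemma is essentially a tautology once the definitions are laid side by side, so the proof amounts to this definition chase together with the one-line remark about zero padding.
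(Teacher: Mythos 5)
Your proof is correct and follows essentially the same route as the paper: a direct definition chase showing that $\mL_{(A,0)}(X,Y)=\mL_A(X)$, so the condition is insensitive to $Y$, and quantifying over $(X,Y)\in\calK$ is equivalent to quantifying over $X\in\proj\calK$. The paper's proof is a single chain of "if and only if"s saying exactly this.
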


\begin{proof}
Note that $A\in(\proj\calK)^\circ$ if and only if for all $X\in \proj\calK$ we have
$\mL_A(X)\succeq0$ if and only if $\mL_{(A,0)}(X,Y)\succeq0$ for all $X\in\proj\calK$ and all $Y\in\smath$
if and only if $\mL_{(A,0)}(X,Y)\succeq0$ for all $(X,Y)\in\calK$ if and only if
$(A,0)\in\calK^\circ$.
\end{proof}

The polar dual of the set $\{(x_1,x_2)\in\mathbb R^2 : 1-x_1^2 -x_2^4 \ge 0\}$  is computed and seen not to be a spectrahedron in Example \ref{ex:scalarbentTVpd}.

\subsection{Polar Duals of Free Spectrahedra}
The next theorem completely characterizes finitely generated matrix convex sets $\calK$ containing $0$ in their interior. 
Namely, such sets are exactly polar duals of bounded free spectrahedra.

\begin{thm}
\label{thm:polarLMI}
 Suppose $\calK$ is a closed matrix convex set with $0$ in its interior. 
 If there is an   $\Om\in \calK$  such that for
 each  $X \in \calK$ there is a $\mu\in\N$ and an isometry $V$ such that 
\beq\label{eq:univRep}
  X_j = V^* (I_\mu \otimes \Om_j) V,
\eeq
 then
\beq\label{eq:polarLMI}
 \calK^\circ= \cD_{\mL_\Om},
\eeq
 where $\mL_\Omega$ is the monic linear pencil $\mL_\Omega(x) = I -     \sum\Omega_j x_j$. 

 Conversely, if there is an $\Omega$ such that  \eqref{eq:polarLMI} holds, 
 then $\Omega \in \calK$ and, for each $X\in \calK,$ there is an isometry $V$ such that \eqref{eq:univRep} holds.
\end{thm}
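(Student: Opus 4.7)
The plan is to prove both implications by combining the interplay between the two linear pencils $\mL_A(X)=I-\sum A_j\otimes X_j$ and $\mL_X(A)=I-\sum X_j\otimes A_j$ --- which are unitarily equivalent via the canonical flip of tensor factors, so that $\mL_A(X)\succeq 0$ iff $\mL_X(A)\succeq 0$ --- with the LMI domination part of Theorem \ref{thm:convexPos}.

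For the forward direction, assume $\Omega\in\calK$ satisfies the dilation hypothesis. To show $\calK^\circ\subseteq\cD_{\mL_\Omega}$, note that $A\in\calK^\circ$ together with $\Omega\in\calK$ forces $\mL_A(\Omega)\succeq 0$, hence $\mL_\Omega(A)\succeq 0$ by the flip, so $A\in\cD_{\mL_\Omega}$. For the reverse inclusion, take $A\in\cD_{\mL_\Omega}$ and an arbitrary $X\in\calK$; writing $X_j=V^*(I_\mu\otimes\Omega_j)V$ with $V^*V=I$, I would derive the compression identity
\[
\mL_A(X)=(I\otimes V^*)\,\mL_A(I_\mu\otimes\Omega)\,(I\otimes V).
\]
A canonical shuffle of tensor factors identifies $\mL_A(I_\mu\otimes\Omega)$ with $I_\mu\otimes\mL_A(\Omega)$, which is positive semidefinite since $\mL_A(\Omega)\succeq 0$; compression then yields $\mL_A(X)\succeq 0$, so $A\in\calK^\circ$.

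For the converse, assume $\calK^\circ=\cD_{\mL_\Omega}$. Since $0$ lies in the interior of $\calK$, the bipolar theorem of Proposition \ref{prop:poDual}\eqref{it:polarpolar} gives $\calK=\cD_{\mL_\Omega}^\circ$, and the membership $\Omega\in\cD_{\mL_\Omega}^\circ$ unwinds tautologically: for every $Y\in\cD_{\mL_\Omega}$ we have $\mL_\Omega(Y)\succeq 0$ by definition. To produce the dilation representation, fix $X\in\calK=\cD_{\mL_\Omega}^\circ$. Unpacking polar duality shows $\mL_X(Y)\succeq 0$ for every $Y\in\cD_{\mL_\Omega}$, that is, $\cD_{\mL_\Omega}\subseteq\cD_{\mL_X}$. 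The LMI domination part of Theorem \ref{thm:convexPos} then produces $\mu$ and a contraction $V$ with $X=V^*(I_\mu\otimes\Omega)V$; and because $0$ is in the interior of $\calK$, Proposition \ref{prop:poDual}\eqref{it:bounded} ensures $\cD_{\mL_\Omega}=\calK^\circ$ is bounded, which upgrades $V$ to an isometry.

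The essential content --- extracting an isometric dilation from an inclusion of free spectrahedra --- is already packaged in Theorem \ref{thm:convexPos}, so the main obstacle is primarily bookkeeping: carefully tracking which of $\mL_A(X)$ or $\mL_X(A)$ is in play at each step (the two directions swap the roles of $A$ and $X$) and verifying the compression-plus-shuffle identity used in the reverse inclusion of the forward direction.
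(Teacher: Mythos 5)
Your proof is correct and follows essentially the same route as the paper's: both directions hinge on the unitary equivalence (flip) between $\mL_A(X)$ and $\mL_X(A)$, the compression-plus-shuffle identity for $\mL_A(V^*(I_\mu\otimes\Omega)V)$, and, for the converse, the LMI-domination conclusion of Theorem \ref{thm:convexPos} together with the boundedness of $\calK^\circ$ coming from Proposition \ref{prop:poDual}\eqref{it:bounded} to upgrade the contraction to an isometry. The only cosmetic difference is that you prove $\calK^\circ=\cD_{\mL_\Omega}$ by two direct inclusions, while the paper first records $\calK=\Gamma:=\{V^*(I_\mu\otimes\Omega)V:\mu,V\}$ and then computes $\Gamma^\circ$; and you invoke Theorem \ref{thm:convexPos} directly where the paper routes through Lemma \ref{lem:dominate-polar}, which is itself just a repackaging of that theorem.
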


A variant of Theorem \ref{thm:polarLMI} in which the condition that $0$
 is in the interior of $\calK$ is replaced by the weaker hypothesis
  that $0$ is merely in $\calK$ and of course with a slightly 
  weaker conclusion, is stated as a separate result, Proposition
 \ref{cor:polarLMI} below.

\begin{lemma}
 \label{lem:dominate-polar}
  Suppose $\Omega \in \smatdg$ and consider the monic linear pencil $\mL_\Omega = I-\sum \Omega_j x_j$.
  \ben[\rm(1)]
\item
  Let $\Omega^\prime =\Omega\oplus 0$ where $0\in\smatdg.$ 
  A tuple $X\in \smatg$ is in $\cD_{\mL_\Omega}^\circ$ if and only if there
  is an isometry $V$ such that
\[
  X_j = V^* (I\otimes \Om_j^\prime)V.
\]
  \item
   If $\cD_{\mL_\Omega}$ is bounded, then $X\in\smatg$ is in $\cD_{\mL_\Omega}^\circ$
  if and only if there is an isometry $V$ such that equation \eqref{eq:univRep} holds.   
\een
\end{lemma}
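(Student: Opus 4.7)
The plan is to reduce both parts to the LMI domination theorem (Theorem \ref{thm:convexPos}), and then handle the contraction-vs-isometry distinction in part (1) via a routine dilation.

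\textbf{Step 1: Translate the polar-dual condition into LMI containment.} For $X \in \smatng$, the statement $X \in \cD_{\mL_\Omega}^\circ$ unpacks to: $\mL_X(Y) = I - \sum X_j \otimes Y_j \succeq 0$ for every $Y \in \cD_{\mL_\Omega}$. Since $\mL_X(Y)$ and $\mL_Y(X) = I-\sum Y_j\otimes X_j$ are unitarily equivalent via the tensor-flip, this says precisely that $Y \in \cD_{\mL_X}$ for every $Y \in \cD_{\mL_\Omega}$; equivalently, $\cD_{\mL_\Omega} \subseteq \cD_{\mL_X}$.

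\textbf{Step 2: Apply LMI domination.} By Theorem \ref{thm:convexPos}, the containment $\cD_{\mL_\Omega} \subseteq \cD_{\mL_X}$ holds if and only if there exist $\mu \in \N$ and a contraction $V$ with
\[
X_j = V^*(I_\mu \otimes \Omega_j)V \qquad \text{for } j=1,\dots,g,
\]
and moreover, when $\cD_{\mL_\Omega}$ is bounded, $V$ may be chosen to be an isometry. This gives part (2) immediately: the forward direction is the LMI-domination isometry, and conversely an isometry $V$ is a fortiori a contraction, so Theorem~\ref{thm:convexPos} returns $\cD_{\mL_\Omega}\subseteq\cD_{\mL_X}$, hence $X\in\cD_{\mL_\Omega}^\circ$.

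\textbf{Step 3: Convert contraction to isometry using the zero padding $\Omega'=\Omega\oplus 0$.} This is the only substantive step for part (1). Given a contraction $V : \C^n \to \C^{d\mu}$ with $X_j = V^*(I_\mu\otimes\Omega_j)V$ (enlarging $\mu$ by padding $V$ with zero rows so that $d\mu \ge n$), pick $S\in \C^{d\mu\times n}$ with $S^*S=I_n-V^*V$ and set $W=\bigl(\begin{smallmatrix} V\\ S\end{smallmatrix}\bigr)\colon \C^n\to \C^{2d\mu}$, an isometry. Under a coordinate permutation $U$ (pulling together the first $d$ rows of each copy of $\Omega'$), one has $U^*(I_\mu\otimes \Omega'_j)U=(I_\mu\otimes\Omega_j)\oplus 0_{d\mu}$, so
\[
 (UW)^*(I_\mu\otimes \Omega'_j)(UW)= V^*(I_\mu\otimes\Omega_j)V + S^*\cdot 0\cdot S = X_j,
\]
and $UW$ is still an isometry. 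Conversely, if $W$ is any isometry with $X_j=W^*(I_\nu\otimes\Omega'_j)W$, writing $W$ (after the same permutation) in block form $\bigl(\begin{smallmatrix} V\\ S\end{smallmatrix}\bigr)$ gives $X_j=V^*(I_\nu\otimes\Omega_j)V$ with $V^*V=W^*W-S^*S\preceq I_n$, i.e., $V$ is a contraction. By Step 2, $X\in\cD_{\mL_\Omega}^\circ$.

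The only mildly delicate point is the bookkeeping in Step~3 to ensure that the padding $S$ can be constructed (which requires $d\mu\ge n$, easily arranged by enlarging $\mu$) and to identify $I_\mu\otimes\Omega'$ with $(I_\mu\otimes\Omega)\oplus 0$ up to a permutation unitary; beyond that, both directions of both parts are immediate consequences of Theorem~\ref{thm:convexPos}.
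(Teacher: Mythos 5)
Your proof is correct and follows essentially the same route as the paper: both reduce $X\in\cD_{\mL_\Omega}^\circ$ to the LMI domination $\cD_{\mL_\Omega}\subseteq\cD_{\mL_X}$, invoke Theorem~\ref{thm:convexPos} to obtain a contraction (or, in the bounded case, an isometry), and then pad the contraction into an isometry by exploiting the zero block in $\Omega'=\Omega\oplus 0$. The only minor divergence is that for the converse direction of part~(1) the paper verifies $X\in\cD_{\mL_\Omega}^\circ$ directly from $\mL_{\Omega'}(Y)=\mL_\Omega(Y)\oplus I\succeq 0$, whereas you reroute through Theorem~\ref{thm:convexPos} a second time after extracting a contraction from the isometry; both are valid.
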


\begin{rem}\rm
 \label{rem:contractvaddzero}
 As an alternate of (2), $X\in\cD_{\mL_\Omega}^\circ$ if and only if there exists a contraction $V$ such 
  that equation \eqref{eq:univRep} holds. 
\end{rem}

\begin{proof}
  Note that $X\in \cD_{\mL_\Omega}^\circ$ if and only if $\cD_{\mL_\Omega}\subset \cD_{\mL_X}$. 
  Thus if $\cD_{\mL_\Omega}$ is bounded, then the result follows directly from the last part of Theorem \ref{thm:convexPos}. 
  On the other hand, if $X$ has the representation of equation \eqref{eq:univRep}, then 
  evidently $X\in \cD_{\mL_\Omega}^\circ$. 

  If $\cD_{\mL_\Omega}$ is not necessarily bounded and $X\in\cD_{\mL_\Omega}^\circ(m)$,  then, by  Theorem \ref{thm:convexPos}, 
\[
  X = \sum_{j=1}^\mu V_j^* \Omega V_j,
\]
 for some $\mu$ and operators $V_j:\C^m\to\C^n$ such that 
\[
  I-\sum V_j^* V_j \succeq 0.
\]
 There is a $\nu>\mu$ and $m\times n$ matrices
 $V_{\mu+1},\dots,V_\nu$ such that 
\[
 \sum_{j=1}^\nu  V_j^* V_j  = I.
\]
 For $1\le j\le\mu$, let 
\[
  W_j = \begin{pmatrix} V_j \\ 0 \end{pmatrix}
\]
  and similarly for $\mu <j\le \nu$, let $W_j = \begin{pmatrix} 0 &  V_j^* \end{pmatrix}^*$. 
 With this choice of $W$, note that $\sum W_j^* W_j  =I_m$ and 
\begin{equation}
 \label{eq:oplus0}
  \sum W_j^* \Omega^\prime_j W_j = \sum W_j^* (\Omega_j\oplus 0) W_j = \sum_{j=1}^\nu V_j^* \Omega_j V_j = X_j.
\end{equation}

 If $X$ has the representation as in equation \eqref{eq:oplus0} and 
 $\mL_\Omega(Y)\succeq 0$, then 
\[
 \mL_X(Y) = \sum_j (W_j\otimes I)^* \mL_{\Omega^\prime}(Y) (W_j\otimes I). 
\]
 On the other hand, 
\[
 \mL_{\Omega^\prime}(Y) = \mL_\Omega(Y) \oplus I \succeq 0. 
\]
 Hence $X\in\cD_{\mL_\Omega}^\circ$. 
\end{proof}

\begin{proof}[Proof of Theorem {\rm\ref{thm:polarLMI}}] 
Suppose first \eqref{eq:polarLMI} holds for some $\Om\in\smatng$. 
 Since $\cD_{\mL_\Omega}^\circ=\calK$ and evidently $\Omega\in\cD_{\mL_\Omega}^\circ$, it follows that 
 $\Omega \in \calK$. 
  Since $0$ is assumed to be in the interior of $\calK$, its polar dual $\calK^\circ=\cD_{\mL_\Omega}$ is bounded 
by Proposition \ref{prop:poDual}.
  Thus, if $X\in \calK=\cD_{\mL_\Omega}^\circ,$ then 
  by Lemma \ref{lem:dominate-polar}, $X$ has a representation as in equation \eqref{eq:univRep}.

Conversely, assume that
$\Omega\in\calK$ has the property that any $X\in \calK$ can be represented as in \eqref{eq:univRep}. 
Consider the matrix convex set 
\[
\Gamma=\big\{V^* (I_\mu \otimes \Omega)V: \mu\in\N,\, V^*V=I\big\}.
\] 
Since $\Omega \in \calK$, it follows that $\Gamma \subset \calK$. On the other hand,
 the hypothesis is that $\calK\subset \Gamma$. Hence $\calK=\Gamma$.   Now,
 for $\mL_X$ a  monic linear pencil, $\mL_X(\Omega)\succeq 0$ if and only if 
\[
 \mL_X\big(V^*(I_\mu \otimes \Omega) V\big) = (V\otimes I)^*\, \mL_X(I_\mu \otimes \Omega)\,(V\otimes I) \succeq 0
\]
 over all choices of $\mu$ and isometries $V$.  Thus, $X\in \calK^\circ$ if and only if $\mL_X(\Omega)\succeq 0$.
 On the other hand, $\mL_X(\Omega)$ is unitarily equivalent to $\mL_\Omega(X)$. Thus
 $X\in \calK^\circ$ if and only if $X\in\cD_{\mL_\Omega}$.
\end{proof}

\begin{prop}
\label{cor:polarLMI}
 Suppose $\calK$ is a closed matrix convex set containing $0$.
 If there is a   $\Om\in \calK$  such that for
 each  $X \in \calK$ there is a $\mu\in\N$ and an isometry $V$ such that 
\beq\label{eq:univRep'}
  X_j = V^* (I_\mu \otimes \Om_j') V,
\eeq
 then
\beq\label{eq:polarLMI'}
 \calK^\circ= \cD_{\mL_\Om},
\eeq
 where $\mL_\Omega$ is the monic linear pencil $\mL_\Omega(x) = I -     \sum\Omega_j x_j$.
 Here $\Omega'=\Omega\oplus0$ as in Lemma {\rm\ref{lem:dominate-polar}}.

 Conversely, if there is an $\Omega$ such that equation \eqref{eq:polarLMI'} holds, 
 then $\Omega \in \calK$ and for each $X\in \calK$ there is an isometry $V$ such that equation \eqref{eq:univRep'} holds.
\end{prop}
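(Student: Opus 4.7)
The plan is to mirror the proof of Theorem \ref{thm:polarLMI} almost verbatim, but with the bounded case of Lemma \ref{lem:dominate-polar} replaced by its unbounded analog, item (1) of the same lemma. The essential difference is that without boundedness of $\cD_{\mL_\Omega}$, tuples in its polar dual must be represented in terms of the padded tuple $\Omega' = \Omega \oplus 0$ rather than $\Omega$ itself. Since $0 \in \calK$ by hypothesis and $\Omega \in \calK$ by assumption, closure of $\calK$ under direct sums ensures $\Omega' \in \calK$, so this padding is harmless throughout.

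For the forward direction, I would first establish $\calK^\circ \subseteq \cD_{\mL_\Omega}$ by applying the polar dual condition at the single point $\Omega \in \calK$: if $A \in \calK^\circ$ then $\mL_A(\Omega) \succeq 0$, which by the standard unitary equivalence of $\mL_A(\Omega)$ and $\mL_\Omega(A)$ yields $A \in \cD_{\mL_\Omega}$. For the reverse inclusion, take $A \in \cD_{\mL_\Omega}$, so that $\mL_A(\Omega') = \mL_A(\Omega) \oplus I \succeq 0$. Given an arbitrary $X = V^*(I_\mu \otimes \Omega')V \in \calK$ with $V$ an isometry, setting $W = I \otimes V$ gives $\mL_A(X) = W^* \mL_A(I_\mu \otimes \Omega') W$, and a tensor-leg swap shows $\mL_A(I_\mu \otimes \Omega')$ is unitarily equivalent to $I_\mu \otimes \mL_A(\Omega') \succeq 0$. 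Hence $\mL_A(X) \succeq 0$ for every $X \in \calK$, and $A \in \calK^\circ$.

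For the converse, assume $\calK^\circ = \cD_{\mL_\Omega}$. Since $\calK$ is closed, matrix convex, and contains $0$, Proposition \ref{prop:poDual}\eqref{it:polarpolar} gives $\calK = \calK^{\circ\circ} = \cD_{\mL_\Omega}^\circ$. Item (1) of Lemma \ref{lem:dominate-polar} characterizes $\cD_{\mL_\Omega}^\circ$ as precisely the set of tuples of the form $V^*(I_\mu \otimes \Omega')V$ for some isometry $V$, which immediately delivers \eqref{eq:univRep'} for every $X \in \calK$. To recover $\Omega \in \calK$ itself, I would take $\mu = 1$ and $V = I$ in this representation to conclude $\Omega' \in \calK$, then compress by the isometry $V_0 = \begin{pmatrix} I \\ 0 \end{pmatrix}$, valid by matrix convexity, to obtain $\Omega = V_0^* \Omega' V_0 \in \calK$.

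No genuine obstacle arises in either direction: the content is already embedded in Lemma \ref{lem:dominate-polar}(1), whose unbounded representation was designed precisely for this situation. The only point requiring any care is the tensor-leg permutation in the forward direction (routine) and the disciplined passage between $\Omega$ and $\Omega'$, which uses $0 \in \calK$ together with closure under direct sums one way and under compression by isometries the other.
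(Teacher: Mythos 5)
Your proof is correct and takes essentially the same route as the paper: both directions rest on Lemma~\ref{lem:dominate-polar}(1), the bipolar identity $\calK=\calK^{\circ\circ}$ from Proposition~\ref{prop:poDual}, and the unitary equivalence of $\mL_A(X)$ and $\mL_X(A)$. The only cosmetic difference is that you recover $\Omega\in\calK$ by first placing $\Omega'\in\calK$ and compressing, whereas the paper notes directly that $\Omega\in\cD_{\mL_\Omega}^\circ$; both are immediate.
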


\begin{proof}
Suppose first \eqref{eq:polarLMI'} holds for some $\Om\in\smatng$. 
 Since $\cD_{\mL_\Omega}^\circ=\calK$ and evidently $\Omega\in\cD_{\mL_\Omega}^\circ$, it follows that 
 $\Omega \in \calK$.  By Lemma \ref{lem:dominate-polar}, if $X\in \calK=\cD_{\mL_\Omega}^\circ,$ then 
  $X$ has a representation as in equation \eqref{eq:univRep'}.
    
Conversely, assume that
$\Omega$ has the property that any $X\in \calK$ can be represented as in \eqref{eq:univRep'}. 
Consider the matrix convex set 
\[
\Gamma=\big\{V^* (I_\mu \otimes \Omega')V: \mu\in\N,\, V^*V=I\big\}.
\] 
Since $0,\Omega \in \calK$, it follows that 
$\Omega'=\Omega\oplus0\in\calK$ and thus
$\Gamma \subset \calK$. On the other hand,
 the hypothesis is that $\calK\subset \Gamma$. Hence $\calK=\Gamma$.   Now,
 for $\mL_X$ a  monic linear pencil, $\mL_X(\Omega)\succeq 0$ if and only if 
 $\mL_X(\Omega')\succeq0$ if and only if
\[
 \mL_X\big(V^*(I_\mu \otimes \Omega') V\big) = (V\otimes I)^*\, \mL_X(I_\mu \otimes \Omega')\,(V\otimes I) \succeq 0
\]
 over all choices of $\mu$ and isometries $V$.  Thus, $X\in \calK^\circ$ if and only if $\mL_X(\Omega)\succeq 0$.
 On the other hand, $\mL_X(\Omega)$ is unitarily equivalent to $\mL_\Omega(X)$. Thus
 $X\in \calK^\circ$ if and only if $X\in\cD_{\mL_\Omega}$.
\end{proof}

\begin{rem}\rm 
\mbox{}\par
\ben[\rm(1)]
\item
For perspective, in the classical (not free)
situation when $g=2$, it is known that
$K\subseteq\R^2$ has an LMI representation if and only if $K^\circ$
is a numerical range \cite{Hen10,HS12}.
It is well known that the polar dual of a spectrahedron is not necessarily a spectrahedron.
This is the case even in $\R^g$, cf.~\cite[Section 5]{BPR13} or Example \ref{ex:scalarbentTVpd}.
\item
In the commutative case the polar dual of a spectrahedron (more generally, of a spectrahedral shadow) is  a spectrahedral shadow, see \cite{GN11} or \cite[Chapter 5]{BPR13}.
\item
It turns out that the 
$\Om$ in Theorem \ref{thm:polarLMI} 
can be taken to be an extreme point of $\calK$ in a very strong free sense. 
We refer to \cite{Far00,Kls13,WW99} for more on matrix extreme points.\qedhere
\een
\end{rem}

\subsection{The Polar Dual of a Free Spectrahedrop is a Free Spectrahedrop}
This subsection contains a duality result for free spectrahedrops (Theorem \ref{thm:polarPolar}) and several of its corollaries. 

  It can happen that $\cD_\mL$ is not bounded, but the projection
   $\calK=\proj_x \cD_\mL$ is.  Corollary \ref{for:strata} says that a
   free spectrahedrop is closed and bounded if and only if it is 
   the projection of some bounded free spectrahedron.  
   For expositional purposes, it is convenient to introduce the following terminology. 
   A free spectrahedrop $\calK$ 
   is called \df{stratospherically bounded} if there is a 
  linear pencil $\mL$ such that $\calK=\proj_x \cD_\mL$, and $\cD_\mL$ 
  is bounded.

\begin{thm}
 \label{thm:polarPolar}
 Suppose $\calK$ is a closed matrix convex set containing $0.$  
\ben[\rm(1)]
\item 
 If $\calK$ is a free spectrahedrop
 and $0$ is in the interior of $\calK,$ then $\calK^\circ$ is a stratospherically  bounded free spectrahedrop.
\item
If $\calK^\circ$ is a free spectrahedrop containing $0$ in its interior, then
 $\calK$ is a stratospherically bounded free spectrahedrop. 
\een
 In particular, if $\calK$ is a bounded free spectrahedrop with $0$ in its interior, then both $\calK$ and $\calK^\circ$
 are stratospherically  bounded free spectrahedrops $($with $0$ in their interiors$)$.
\end{thm}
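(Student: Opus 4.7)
The plan is to prove (1) by reading the defining condition of $\calK^\circ$ as a completely positive interpolation problem that can be resolved using Theorem \ref{thm:interp}; parts (2) and the ``in particular'' clause then fall out from (1) via the bipolar relation, Proposition \ref{prop:poDual}\eqref{it:polarpolar}.

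For (1), I would begin by invoking Lemma \ref{lem:boundedmonicLift} to fix a monic pencil $\mL(x,y) = I - \sum_j A_j x_j - \sum_k B_k y_k$ of some size $d$ with $\calK = \proj_x \cD_{\mL}$. A tuple $Z \in \smatng$ lies in $\calK^\circ(n)$ iff $\mL_Z(X) \succeq 0$ for every $X \in \calK$, and via the projection description of $\calK$ this is equivalent to $\cD_{\mL} \subseteq \cD_{\mL_{(Z,0)}}$ as free spectrahedra in $g+h$ variables. By the LMI-domination clause of Theorem \ref{thm:convexPos}, the latter holds precisely when there exist $\mu$ and a contraction $V$ with
\[
Z_j = V^*(I_\mu \otimes A_j)V \quad (1\le j\le g), \qquad 0 = V^*(I_\mu \otimes B_k)V \quad (1\le k\le h),
\]
equivalently when there is a cp map $\phi:M_d \to M_n$ with $\phi(I)\preceq I$, $\phi(A_j) = Z_j$, and $\phi(B_k)=0$.

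Now apply the sub-unital variant of Theorem \ref{thm:interp} (proved identically to the unital case, with $\phi(I)\preceq I$ encoded as $\sum_p C_{p,p}\preceq I_n$): the existence of such a $\phi$ is equivalent to the existence of a Choi matrix $C=(C_{p,q})_{p,q=1}^d\in\mbS_{nd}$ satisfying
\[
C\succeq 0,\quad I_n-\sum_{p=1}^d C_{p,p}\succeq 0,\quad Z_j=\sum_{p,q}(A_j)_{p,q}C_{p,q},\quad 0=\sum_{p,q}(B_k)_{p,q}C_{p,q}.
\]
Parameterize hermitian $C$ by auxiliary hermitian matrix variables $Y$ and rewrite each equality as a pair of $\succeq 0$/$\preceq 0$ block constraints; stacking everything block-diagonally produces a single linear pencil $\hat L(x,y)$ whose free spectrahedron $\cD_{\hat L}$ has $\proj_x \cD_{\hat L} = \calK^\circ$. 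Because $C\succeq 0$ and $\sum_p C_{p,p}\preceq I_n$ force $\|C\|\le 1$, the auxiliary $Y$ and the output $Z$ are bounded, so $\cD_{\hat L}$ is bounded and $\calK^\circ$ is a stratospherically bounded free spectrahedrop.

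Part (2) reduces immediately to (1): $\calK^\circ$ is closed matrix convex containing $0$ by Proposition \ref{prop:poDual}\eqref{it:mcon}, and is a free spectrahedrop with $0$ in its interior by hypothesis, so (1) gives $(\calK^\circ)^\circ$ stratospherically bounded; by the bipolar theorem, Proposition \ref{prop:poDual}\eqref{it:polarpolar}, $(\calK^\circ)^\circ = \calK$. For the ``in particular'' clause, boundedness of $\calK$ places $0$ in the interior of $\calK^\circ$ by Proposition \ref{prop:poDual}\eqref{it:bounded2}; then (1) makes $\calK^\circ$ stratospherically bounded, and (2) applies back to give $\calK$ stratospherically bounded as well. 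The main obstacle is the bookkeeping in packaging the positivity of the Choi matrix and the two families of equalities into a single linear pencil and verifying that the projection recovers $\calK^\circ(n)$ level-by-level; once that is done, everything else is a direct concatenation of LMI domination, cp interpolation, and the bipolar theorem.
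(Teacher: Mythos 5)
Your proof is essentially the same as the paper's, with one cosmetic difference and one minor imprecision. The paper factors the argument through Proposition \ref{prop:liftPolar} and Lemma \ref{lem:dominate-polar}: it replaces $(\Omega,\Gamma)$ by $(\Omega\oplus 0,\Gamma\oplus 0)$ so that the LMI-domination clause of Theorem \ref{thm:convexPos} produces an \emph{isometry} $V$, hence a \emph{unital} cp interpolation problem, which is then encoded via the Choi matrix exactly as in Theorem \ref{thm:interp}(1). You instead keep the original $(\Omega,\Gamma)$, work with a \emph{contraction} $V$, and so arrive at a \emph{sub-unital} cp interpolation problem ($\phi(I)\preceq I$), encoded by $\sum_p C_{p,p}\preceq I_n$. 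The two routes are equivalent (the paper's Remark \ref{rem:contractvaddzero} explicitly records the contraction reformulation), and both hinge on the same three ingredients: Lemma \ref{lem:boundedmonicLift}, the domination clause of Theorem \ref{thm:convexPos}, and the Choi-matrix/SDP formulation of cp interpolation; your version is marginally leaner since it avoids the $\oplus 0$ padding. Parts (2) and the ``in particular'' clause you handle via the bipolar relation, exactly as the paper does.

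Two small points of care. First, the claim ``$C\succeq 0$ and $\sum_p C_{p,p}\preceq I_n$ force $\|C\|\le 1$'' is not quite right: what follows is $C_{p,p}\preceq I$ for each $p$, hence (from positivity of the $2\times 2$ block submatrices) $\|C_{p,q}\|\le 1$ for all $p,q$, and $\tr(C)\le n$ so $\|C\|\le n$; this is all that is needed for the lifted free spectrahedron to be bounded, but the stated bound $\|C\|\le 1$ is false in general. Second, the ``bookkeeping'' you defer — splitting the non-hermitian $C_{p,q}$ into hermitian real and imaginary parts and writing the Choi positivity and the linear constraints as a single linear pencil — is exactly the content of equations \eqref{eq:complex1}–\eqref{eq:complex4} in the paper's proof of Proposition \ref{prop:liftPolar}; you should be aware it is not entirely trivial, as the imaginary parts must be carried as genuine auxiliary variables so that the result is a free spectrahedron over $\smatg$ in the sense of the paper.
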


 Before presenting the proof of the theorem we state a few
 corollaries and Proposition  \ref{prop:liftPolar}  needed in the proof.
 
\begin{cor}
 \label{cor:liftPolar}
 Given  $\Omega \in \smatdg,$ let $\mL_{\Omega}$ denote the corresponding monic
 linear pencil. The free set $\cD_{\mL_\Omega}^\circ$ is a stratospherically bounded free spectrahedrop. 
\end{cor}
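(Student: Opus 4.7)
The plan is to realize $\cD_{\mL_\Omega}^\circ$ as a coordinate projection of an explicit bounded free spectrahedron whose auxiliary variables parameterize the Choi matrix of a unital cp interpolating map. Set $\Omega' = (\Omega_1 \oplus 0, \dots, \Omega_g \oplus 0) \in \mbS_{2d}^g$. By Lemma \ref{lem:dominate-polar}(1), $X \in \cD_{\mL_\Omega}^\circ(m)$ if and only if there exist $\mu \in \N$ and an isometry $V: \C^m \to \C^{2d\mu}$ with $X_j = V^*(I_\mu \otimes \Omega_j')V$ for each $j$. Splitting $V$ into $\mu$ row-blocks $V_1, \dots, V_\mu: \C^m \to \C^{2d}$, this reads $\sum_i V_i^* V_i = I_m$ and $X_j = \sum_i V_i^* \Omega_j' V_i$. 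By Theorem \ref{thm:cpstuff}(v), $X$ therefore lies in $\cD_{\mL_\Omega}^\circ(m)$ precisely when there exists a unital cp map $\Phi: M_{2d} \to M_m$ with $\Phi(\Omega_j') = X_j$.

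Applying Theorem \ref{thm:interp}(1) with input tuple $\Omega' \in \mbS_{2d}^g$ and target $X \in \mbS_m^g$, the existence of such a $\Phi$ is equivalent to the existence of a hermitian block matrix $C = (C_{p,q})_{p,q=1}^{2d} \in M_{2d}(M_m)$ satisfying
\[
C \succeq 0, \qquad \sum_{p=1}^{2d} C_{p,p} = I_m, \qquad \sum_{p,q=1}^{2d} (\Omega_\ell')_{p,q}\, C_{p,q} = X_\ell \quad (\ell = 1, \dots, g).
\]
Fixing a hermitian basis $\{E_k\}_{k=1}^{h}$ of $M_{2d}$ (with $h = (2d)^2$), a hermitian $C$ is uniquely of the form $C = \sum_k E_k \otimes Y_k$ for free hermitian $m \times m$ variables $Y_1, \dots, Y_h$, and the three displayed conditions (with each affine equality encoded as two opposite LMIs $\pm P \succeq 0$) assemble into a single monic linear pencil $L(x, y)$ in variables $(x, y) = (X_1, \dots, X_g, Y_1, \dots, Y_h)$. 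Hence $\cD_{\mL_\Omega}^\circ = \proj_x \cD_L$, exhibiting $\cD_{\mL_\Omega}^\circ$ as a free spectrahedrop.

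It remains to verify that the lift $\cD_L$ is bounded. From $C \succeq 0$ the diagonal blocks satisfy $C_{p,p} \succeq 0$, and $\sum_p C_{p,p} = I_m$ then forces $0 \preceq C_{p,p} \preceq I_m$. Positivity of $C$ also yields the Cauchy--Schwarz type bound $\|C_{p,q}\|^2 \le \|C_{p,p}\|\,\|C_{q,q}\| \le 1$, so every block of $C$, hence every auxiliary variable $Y_k$, is uniformly bounded. The interpolation equalities then express each $X_\ell$ as a fixed linear combination of the $C_{p,q}$, giving a uniform bound on $X$. Thus $\cD_L$ is uniformly bounded, i.e., $\cD_{\mL_\Omega}^\circ$ is a stratospherically bounded free spectrahedrop.

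The main obstacle will be the routine bookkeeping of the pencil assembly step: choosing the hermitian basis of $M_{2d}$, encoding the affine equalities as pairs of LMIs, and rescaling so that the resulting block-diagonal pencil is genuinely monic in $(x, y)$. Once those details are arranged, the conceptual content is just the observation that the polar dual of a free spectrahedron is the image of $\Omega'$ under unital cp maps, combined with the LMI description of the Choi cone developed in Section \ref{sec:cp3}.
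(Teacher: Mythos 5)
Your argument is correct, but it takes a different route than the paper. The paper disposes of Corollary \ref{cor:liftPolar} in one line by noting that $\cD_{\mL_\Omega}$ is itself a free spectrahedrop with $0$ in its interior and citing Theorem \ref{thm:polarPolar}. What you have done instead is essentially reprove the underlying technical engine, Proposition \ref{prop:liftPolar}, in the special case $h=0$ (no $\Gamma$-variables) applied to $\Omega' = \Omega\oplus 0$: reduce to characterizing $\cD_{\mL_\Omega}^\circ$ as the set of unital cp images of $\Omega'$ (via Lemma \ref{lem:dominate-polar}(1)), then use the Choi-matrix LMI from Theorem \ref{thm:interp}(1) to exhibit the set as a coordinate projection of a bounded free spectrahedron in the Choi variables. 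This is exactly the content and boundedness argument ($0 \preceq C_{p,p}\preceq I$, off-diagonal bound from positivity) used in the paper's proof of Proposition \ref{prop:liftPolar}. The advantage of your route is that it is self-contained and does not depend on the stronger Theorem \ref{thm:polarPolar}; the disadvantage is that it duplicates work that the paper needs anyway for the theorem.

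One small correction to the ``routine bookkeeping'' step: you cannot arrange the assembled pencil $L(x,y)$ to be \emph{monic}. Encoding the affine equalities $\sum_p C_{p,p} = I_m$ and $\sum_{p,q}(\Omega'_\ell)_{p,q}C_{p,q}=X_\ell$ as pairs of opposite semidefiniteness conditions forces the constant block of $L$ to contain a summand $-I$, so $L(0,0)\ne I$ and, indeed, $\cD_L$ has empty interior. This is harmless: the definition of a (stratospherically bounded) free spectrahedrop requires only a linear pencil whose free spectrahedron is bounded, not a monic one, and your boundedness argument already delivers that. Replace ``monic linear pencil'' with ``linear pencil'' and the proof stands.
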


\begin{proof}
  The set $\cD_{\mL_{\Omega}}$ is (trivially) a free spectrahedrop with $0$ in its interior. Thus, by Theorem \ref{thm:polarPolar},
  $\cD_{\mL_{\Omega}}^\circ$ is a stratospherically bounded free spectrahedrop.
\end{proof}

\begin{cor}\label{for:strata}
A free spectrahedrop  $\calK\subset\smatg$ is  closed 
and bounded  if and only if it is stratospherically bounded.
\end{cor}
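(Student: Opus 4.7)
The plan is to derive both directions from the polar duality results already established, in particular Theorem \ref{thm:polarPolar} together with the bipolar identity in Proposition \ref{prop:poDual}.

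The ``if'' direction is immediate from finite-dimensional compactness. If $\calK=\proj_x\cD_\mL$ with $\cD_\mL$ bounded, then at each level $n$ the set $\cD_\mL(n)$ is closed and bounded in a finite-dimensional real vector space, hence compact, and its continuous image $\calK(n)$ under coordinate projection is likewise compact, so in particular closed and bounded.

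For the harder ``only if'' direction, assume $\calK$ is a closed and bounded free spectrahedrop. The strategy is to reduce to the case $0\in\inter(\calK)$ by an affine translation and then apply Theorem \ref{thm:polarPolar} twice. Concretely, if $c\in\inter(\calK(1))$, the translate $\calK-c$ is again a free spectrahedrop: a lifting pencil $L(x,y)=D+\sum\Omega_jx_j+\sum\Gamma_ky_k$ is replaced by $L(x+c,y)=(D+\sum c_j\Omega_j)+\sum\Omega_jx_j+\sum\Gamma_ky_k$, which is still linear in $(x,y)$. Translation preserves closedness, boundedness, and (manifestly) stratospheric boundedness; by Lemma \ref{lem:interiorSame} it also moves the chosen interior point to the origin. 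So without loss of generality we may assume $0\in\inter(\calK)$.

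Under this reduction the endgame is a clean duality argument. Boundedness of $\calK$ together with item (4) of Proposition \ref{prop:poDual} gives $0\in\inter(\calK^\circ)$. Theorem \ref{thm:polarPolar}(1) applied to $\calK$ then shows that $\calK^\circ$ is a stratospherically bounded free spectrahedrop; in particular it is a free spectrahedrop containing $0$ in its interior, so Theorem \ref{thm:polarPolar}(2) applied to $\calK^\circ$ shows that $\calK^{\circ\circ}$ is stratospherically bounded. But $\calK^{\circ\circ}=\calK$ by the bipolar theorem (item (6) of Proposition \ref{prop:poDual}), so $\calK$ itself is stratospherically bounded, as desired.

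The main obstacle to this plan is the degenerate case in which $\calK(1)$ has empty interior in $\R^g$, so that no point is available to translate to the origin. In that situation $\calK(1)$ lies in a proper affine subspace $V\subseteq\R^g$, and since $v^*Xv\in\calK(1)\subseteq V$ for every $X\in\calK(n)$ and every unit vector $v\in\C^n$ (using that unit vectors are isometries $\C\to\C^n$), the linear relations defining $V$ force $\calK$ to live in a correspondingly lower-dimensional ambient free space. The argument above then runs in that reduced dimension, and pulling back yields a bounded linear pencil representation of the original $\calK$.
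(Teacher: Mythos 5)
Your proposal is correct and follows essentially the same route as the paper: the easy direction via levelwise compactness, reduction to the full-dimensional case, translation to put $0$ in the interior, and then the polar-duality machinery of Theorem \ref{thm:polarPolar} (whose ``in particular'' clause you in effect re-derive from parts (1) and (2) together with the bipolar identity). The only cosmetic slip is the second invocation, where you appeal to Theorem \ref{thm:polarPolar}(2) ``applied to $\calK^\circ$'' to conclude $\calK^{\circ\circ}$ is stratospherically bounded — that is either part (1) applied to $\calK^\circ$, or part (2) applied directly to $\calK$ (which already gives the conclusion about $\calK$ without the bipolar step) — but the argument is sound either way.
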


\begin{proof}
Implication $(\Leftarrow)$ is obvious. 
$(\Rightarrow)$ 
Let us first reduce to the case where $\calK(1)$ has nonempty interior.
If $\calK(1)$ has empty interior, then it is contained in a proper affine
hyperplane $\{\ell=0\}$ of $\R^g$. 
Here $\ell$ is an affine linear functional. 
In this case we can solve for one
of the variables thereby reducing the codimension of $\calK(1)$. 
(Note that $\ell=0$ on $\calK(1)$ implies $\ell=0$ on $\calK$, cf.~\cite[Lemma 3.3]{Lasse}.)

Now let $\hat x\in\R^g$ be an interior point of $\calK(1)$. Consider the translation
\beq\label{eq:transl}
\tilde\calK = \calK -\hat x = \bigcup_{n\in\N} \big\{ X- \hat x I_n : X\in\calK(n)\big\}.
\eeq
Clearly, $\tilde\calK$ is a bounded free spectrahedrop with $0$ in its interior. Hence
by Theorem \ref{thm:polarPolar}, it is stratospherically bounded. Translating back, we see  $\calK$ is a stratospherically bounded free spectrahedrop.
\end{proof}

Each stratospherically bounded free spectrahedrop is closed, since it is
the projection of a (levelwise) compact spectrahedron.
Hence a bounded free spectrahedrop $\calK$ will not be 
stratospherically bounded if it is not closed. For a concrete example, consider
the  linear pencil
\[
L(x,y)=\begin{pmatrix} 2-x & 1 \\ 1 & 2-y \end{pmatrix} \oplus \begin{pmatrix} 2+x \end{pmatrix},
\]
and let $\calK=\proj_x\cD_{L}$. Thus
\[
\calK= \big\{ X\in\mbS:  -2\preceq X\prec2\big\}
\]
is bounded but not closed.

\begin{prop}
 \label{prop:liftPolar}
  Given $\Omega \in \mathbb S_d^g$ and $\Gamma\in\mathbb S_d^h$, the sequence $\calK=(\calK(n))_n,$
\[
  \calK(n) =\big\{A \in \smatng:   
 A = V^* (I_\mu \otimes \Omega) V, \ \  0=V^*(I_\mu \otimes \Gamma)V \textrm{ for some isometry } V
 \textrm{ and }\mu\leq nd\big\},
\]
 is a stratospherically bounded free spectrahedrop

 Let $\mL_{(\Omega,\Gamma)}$ denote the monic linear pencil corresponding to $(\Omega,\Gamma)$. The free set
\[
  \cC = \big\{A : (A,0)\in\cD_{\mL_{(\Omega,\Gamma)}}^\circ\big\}
\]
 is a stratospherically bounded free spectrahedrop. 
\end{prop}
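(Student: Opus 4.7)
The plan is to establish the two assertions in turn, reducing the second to the first via a direct sum trick.

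For part one, I would recognize the condition defining $\calK(n)$ as the Kraus description of unital completely positive (ucp) maps: writing an isometry $V:\C^n\to(\C^d)^\mu$ as a column $(V_1,\dots,V_\mu)^\top$ of operators $V_j:\C^n\to\C^d$, the identity $V^*V=I_n$ becomes $\sum_j V_j^*V_j=I_n$, while $V^*(I_\mu\otimes M)V = \sum_j V_j^* M V_j = \Phi(M)$ for the ucp map $\Phi:M_d\to M_n$ with Kraus operators $V_j$. Hence $A\in\calK(n)$ if and only if there exists a ucp map $\Phi:M_d\to M_n$ with $\Phi(\Omega_\ell)=A_\ell$ and $\Phi(\Gamma_k)=0$ for all $\ell,k$; the bound $\mu\le nd$ in the definition is automatic, since Choi's theorem (Theorem~\ref{thm:cpstuff}) provides a Kraus representation with at most $nd$ summands.

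Next I would invoke the unital case of Theorem~\ref{thm:interp} to encode this ucp interpolation as the existence of a positive semidefinite Choi matrix $C=(C_{p,q})_{p,q=1}^d\in\mbS_{nd}$ satisfying
\[
C\succeq0,\ \sum_{p=1}^d C_{p,p}=I_n,\ \sum_{p,q=1}^d (\Omega_\ell)_{p,q}C_{p,q}=A_\ell,\ \sum_{p,q=1}^d (\Gamma_k)_{p,q}C_{p,q}=0.
\]
These conditions are jointly linear in $(A,C)$, so they cut out a spectrahedron in $\smatng\times\mbS_{nd}$ whose projection onto the $A$-coordinate is $\calK(n)$; this exhibits $\calK$ as a free spectrahedrop. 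For stratospheric boundedness, $C\succeq0$ together with $\sum_p C_{p,p}=I_n$ forces each diagonal block $C_{p,p}\preceq I_n$, and a standard Cauchy--Schwarz argument for positive block matrices then gives $C\preceq dI_{nd}$, so the LMI lift is bounded.

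For part two, I would apply Lemma~\ref{lem:dominate-polar}(1) to the monic pencil $\mL_{(\Omega,\Gamma)}$ in the variables $(x,y)$. It characterizes $(A,0)\in\cD_{\mL_{(\Omega,\Gamma)}}^\circ$ by the existence of $\mu\in\N$ and an isometry $V$ with $A_\ell=V^*(I_\mu\otimes(\Omega_\ell\oplus0))V$ and $0=V^*(I_\mu\otimes(\Gamma_k\oplus0))V$ for every $\ell,k$. This is precisely the defining condition of $\calK$ from part one with $\Omega$ and $\Gamma$ replaced by $\Omega\oplus0$ and $\Gamma\oplus0$ respectively, so part one immediately yields that $\cC$ is a stratospherically bounded free spectrahedrop. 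I expect the main obstacle to be the careful bookkeeping in identifying the Kraus isometry condition with the Choi-matrix LMI in part one, together with the consistent handling of the $\oplus 0$ summands dictated by Lemma~\ref{lem:dominate-polar}; once these translations are made the rest is a direct appeal to the interpolation machinery of Section~\ref{sec:cp3}.
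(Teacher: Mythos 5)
Your argument is the paper's argument, modulo one detail you wave at rather than carry out. Both routes identify $\calK(n)$ with the set of images of $(\Omega,\Gamma;I)$ under ucp maps $\Phi:M_d\to M_n$ annihilating the $\Gamma_k$, convert this to the Choi-matrix LMI via Theorem~\ref{thm:interp}, read off boundedness from $\sum_p C_{p,p}=I_n$, and then deduce the second assertion from Lemma~\ref{lem:dominate-polar} by replacing $(\Omega,\Gamma)$ with $(\Omega\oplus 0,\Gamma\oplus 0)$.

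The step you fold into ``careful bookkeeping'' is, however, not merely notational: to exhibit the lift as a \emph{free} spectrahedron one must present it as $\cD_L$ for a pencil $L$ evaluated on a tuple of \emph{hermitian} $n\times n$ matrices. The off-diagonal blocks $C_{p,q}$ ($p\neq q$) of the Choi matrix are not hermitian, so the paper first replaces each $C_{p,q}$ by the hermitian pair $\hat C_{p,q}=\tfrac12(C_{p,q}+C_{p,q}^*)$, $\check C_{p,q}=-\tfrac{i}{2}(C_{p,q}-C_{p,q}^*)$ and rewrites conditions (i)--(iv) as a genuine linear pencil in these new hermitian variables (equations \eqref{eq:complex1}--\eqref{eq:complex4}). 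Without this decomposition the lift is a spectrahedron in $\mbS_{nd}$ at each level $n$, but the sequence over $n$ is not yet visibly a free spectrahedron over a fixed tuple of auxiliary hermitian-matrix variables, which is what the definition of a free spectrahedrop requires. Once that rewriting is done your Cauchy--Schwarz bound $C\preceq d\,I_{nd}$ and the paper's $\|\hat C_{p,q}\|,\|\check C_{p,q}\|\leq 1$ are interchangeable certificates of stratospheric boundedness, and your reduction of the second assertion to the first via $\Omega'=\Omega\oplus 0$, $\Gamma'=\Gamma\oplus 0$ is exactly the paper's.
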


\begin{proof}
Let $\cS$ denote the span of $\{I,  \Omega_1,\ldots,\Omega_g, \ \Gamma_1,\dots,\Gamma_h\}$.
  Thus $\cS$ is an operator system in $M_d$ (the fact that $I\in \cS$ implies $\cS$ contains 
  a positive definite element).  Let  \[\phi:\cS\to M_n\] denote the linear mapping determined by
 \[I  \mapsto I,\quad \Omega_j  \mapsto A_j,\quad\text{ and }\quad\Gamma_\ell  \mapsto 0.\]
  Observe that, by Theorem \ref{thm:cpstuff},  $A\in\calK(n)$ if and only if 
  $\phi$ has a completely positive extension $\Phi:M_d\to M_n$.  Theorem \ref{thm:interp}
  expresses existence of such a $\Phi$ as a (unital) cp interpolation problem in terms of a
   free spectrahedron.  For the reader's convenience we write out this critical LMI explicitly.
  Let  $\omega_{pq}^j$ denote the $(p,q)$-entry of $\Omega_j$
 and $\gamma_{pq}^\ell$ the $(p,q)$ entry of $\Gamma_\ell$. For a complex matrix (or scalar) $Q$ we use $\hat Q$ to denote its real part and $i\check Q$ for its imaginary part. Thus $\hat Q = \frac 12 (Q+Q^*)$ and $\check Q = -\frac{i}{2}(Q-Q^*)$.

Now
$A$ is in $\calK(n)$  if and only if  there exists $n\times n$ matrices $C_{p,q}$ satisfying
\begin{enumerate}[(i)]
 \item $ \sum_{p,q=1}^d E_{p,q} \otimes C_{p,q}  \succeq0$;
 \item $\sum_{p=1}^{d} C_{p,p} = I_{n};$ 
 \item $\sum_{p,q=1}^{d}  \omega_{pq}^\ell C_{p,q}=  A_\ell$ \ for  $\ell=1,\ldots,g;$ and
 \item $\sum_{p,q=1}^d \gamma_{pq}^\ell C_{p,q} = 0$\  for $\ell=1,\dots,h.$
\end{enumerate}
Since the $C_{p,q}$ for $p\neq q$ are not hermitian matrices, we rewrite the system (i) -- (iv) into one
with hermitian unknowns $\hat C_{p,q}$ and $\check C_{p,q}$.
Property (i) transforms into 
\[
\sum_{p,q} (\hat E_{p,q}\otimes \hat C_{p,q} - \check E_{p,q} \otimes \check C_{p,q}) + i (\hat E_{p,q}\otimes \check C_{p,q} + \check E_{p,q} \otimes \hat C_{p,q})\succeq0, 
\]
i.e.,
\beq\label{eq:complex1}
\begin{split}
\sum_{p,q} \hat E_{p,q}\otimes \hat C_{p,q} - \check E_{p,q} \otimes \check C_{p,q} &\succeq0 \\
\sum_{p,q} \hat E_{p,q}\otimes \check C_{p,q} + \check E_{p,q} \otimes \hat C_{p,q} &= 0.
\end{split}
\eeq
In item (ii) we simply replace $C_{p,p}$ with $\hat C_{p,p}$,
\beq\label{eq:complex2}
\sum_{p=1}^{d} \hat C_{p,p} = I_{n}.
\eeq
Properties (iii) and (iv) are handled similarly to (i).   Thus
\beq\label{eq:complex3}
\begin{split}
 \sum_{p,q} \hat \omega_{pq}^\ell \hat C_{p,q}
- \check \omega_{pq}^\ell \check C_{p,q}  & =  A_\ell \\
 \sum_{p,q} \hat \omega_{pq}^\ell \check C_{p,q}
+ \check \omega_{pq}^\ell \hat C_{p,q}  & =  0,
\end{split}
\eeq
and
\beq\label{eq:complex4}
\begin{split}
 \sum_{p,q} \hat \gamma_{pq}^\ell \hat C_{p,q}
- \check \gamma_{pq}^\ell \check C_{p,q}  & =  A_\ell \\
 \sum_{p,q} \hat \gamma_{pq}^\ell \check C_{p,q}
+ \check \gamma_{pq}^\ell \hat C_{p,q}  & =  0.
\end{split}
\eeq

  Thus, $\calK$ is the linear image of the explicitly 
  constructed free spectrahedron (in the variables
  $\hat C_{p,q} $ and $\check C_{p,q}$) 
given by \eqref{eq:complex1} -- \eqref{eq:complex4}. 
 Moreover,
 items (i) and (ii) together imply  $0\preceq C_{p,p}\preceq I$. 
 It now follows that
 $\|\hat C_{p,q}\|, \|\check C_{p,q}\|\le 1$ for all $p,q$. Thus, this free spectrahedron is bounded.  
 It is now routine to verify that $\calK$ is a projection
of a bounded free spectrahedron and is thus
 a stratospherically bounded free spectrahedrop.

 Let $\Omega^\prime = \Omega\oplus 0$ and $\Gamma^\prime = \Gamma\oplus 0$ where $0\in\smatdg$.  Note that
\[
  \cD_{\mL_{(\Omega,\Gamma)}} = \cD_{\mL_{(\Omega^\prime,\Gamma^\prime)}}.
\]
  By Lemma \ref{lem:dominate-polar},  $(A,0)\in \cD_{\mL_{(\Omega,\Gamma)}}^\circ$ if and only if 
\[
  A\in \big\{B: \exists \, \mu\in\N \mbox{ and an isometry } V \mbox{ such that } B= V^*(I_\mu\otimes \Omega^\prime) V, \  0 = V^*(I_\mu \otimes \Gamma^\prime)V\big\}.
\]
  By the first part of the proposition (applied to the tuple $(\Omega^\prime,\Gamma^\prime)$), it follows that
 $\cC$ is a stratospherically bounded free spectrahedrop.
\end{proof}

We are now ready to give the proof of 
Theorem \ref{thm:polarPolar}.

\begin{proof}[Proof of Theorem {\rm\ref{thm:polarPolar}}]
  Suppose $\calK$ is a free spectrahedrop with $0$ in its interior.  By Lemma \ref{lem:boundedmonicLift}, 
 there exists  $(\Omega,\Gamma),$  a pair of tuples of matrices, such that 
\[ 
   \calK=\big\{X: \exists Y \mbox{ such that } (X,Y) \in\cD_{\mL_{(\Omega,\Gamma)}}\big\} =\proj_x \cD_{\mL_{(\Omega,\Gamma)}},
\] 
 where $\mL_{(\Omega,\Gamma)}(x,y)$ is the monic linear pencil associated to $(\Omega,\Gamma)$.

Observe,  $A \in \calK^\circ$ if and only if for each $X\in \calK$, 
\[
  \mL_A(X) \succeq 0.
\]
 Thus, $A\in \calK^\circ$ if and only if 
\[
 \mL_{(A,0)} (X,Y) \succeq 0
\]
 for all $(X,Y)\in \cD_{\mL_{(\Omega,\Gamma)}}$
 if and only if 
 \[
 (A,0)\in  \cD_{\mL_{(\Omega,\Gamma)}}^\circ.
 \]
  Summarizing, $A\in \calK^\circ$ if and only if $(A,0)\in\cD_{\mL_{(\Omega,\Gamma)}}^\circ$. Thus, by the second part of
  Proposition \ref{prop:liftPolar}, $\calK^\circ$ is a
stratospherically  bounded free spectrahedrop.

 Because $\calK$ contains $0$ and is a closed matrix convex set, 
$\calK^{\circ\circ}=\calK$ by Proposition \ref{prop:poDual}.
 Thus, if  $\calK^\circ$ is a free spectrahedrop with $0$ in its interior, then, by what has already been proved,
 $\calK^{\circ\circ}=\calK$ is a stratospherically bounded free  spectrahedrop. 

 Finally, if $\calK$ is a bounded free spectrahedrop with $0$ in its interior, then $\calK^\circ$ contains $0$ in its interior and is a stratospherically bounded free spectrahedrop.
 Hence, $\calK=\calK^{\circ\circ}$ is also a stratospherically bounded free spectrahedrop.  
\end{proof}

Note that the polar dual of a free spectrahedron is a matrix convex set generated by
a singleton (Theorem \ref{thm:polarLMI}) and is a free spectrahedrop by the above
corollary.

\begin{cor}
 \label{cor:polardualofdrop}
 Let $\mL$ denote the monic linear pencil associated with $(\Omega,\Gamma)$. If $\calK=\proj_x \cD_{\mL}$ is bounded, then its 
 polar dual is the free set given by
\[
 \begin{split}
 \calK^\circ(n) &=  \big\{A\in\smatng: (A,0)\in \cD_{\mL}^\circ\big\} \\ 
& = \big\{A\in\smatng: \exists \mu\in\N \mbox{ and an isometry } V \mbox{ s.t. } 
   A= V^* (I_\mu \otimes \Omega)V, \ \ 0 = V^*(I_\mu \otimes \Gamma)V\big\}.
\end{split}
\]
 Whether or not $\calK$ is bounded,  
its polar
  dual is the free set
\[
 \begin{split}
 \calK^\circ(n) & =  \big\{A\in\smatng: (A,0)\in \cD_{\mL}^\circ\big\} \\ 
 & = \big\{A\in\smatng: \exists \mu\in\N \mbox{ and an isometry } V \mbox{ s.t. } 
   A= V^* (I_\mu \otimes \Omega^\prime)V, \ \ 0 = V^*(I_\mu \otimes \Gamma^\prime)V\big\},
\end{split}
\]
where $\Omega'=\Omega\oplus 0$ and $\Gamma'=\Gamma\oplus 0,$ as in Lemma 
{\rm\ref{lem:dominate-polar}}.
 \end{cor}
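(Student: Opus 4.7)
The plan is to derive the corollary as a direct assembly of two preceding ingredients: Lemma \ref{lem:projDual}, which translates polar duals through a coordinate projection, and Lemma \ref{lem:dominate-polar}, which gives the isometric-compression description of the polar dual of a monic spectrahedron. No new technical content is required beyond these two lemmas and some bookkeeping about boundedness.

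First, I apply Lemma \ref{lem:projDual} to the projection $\proj_x:\mbS^{g+h}\to\smatg$ acting on $\cD_\mL$. Since $\calK=\proj_x\cD_\mL$, this yields, for every $n$ and every $A\in\smatng$,
\[
\calK^\circ(n)=\big\{A\in\smatng:(A,0)\in\cD_\mL^\circ(n)\big\},
\]
which is the first equality asserted in \emph{both} displays of the corollary. This step uses no boundedness hypothesis.

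Second, I regard $\mL=\mL_{(\Omega,\Gamma)}$ as the monic linear pencil attached to the single $(g+h)$-tuple $(\Omega,\Gamma)\in\smatd^{g+h}$, and apply Lemma \ref{lem:dominate-polar} in two cases. For the general statement, part (1) of the lemma expresses every tuple in $\cD_\mL^\circ$ as $V^*(I_\mu\otimes(\Omega',\Gamma'))V$ for some isometry $V$. Specializing to tuples of the form $(A,0)$ and separating the $x$- and $y$-coordinates gives exactly the two equations $A=V^*(I_\mu\otimes\Omega')V$ and $0=V^*(I_\mu\otimes\Gamma')V$, which is the general formula. For the bounded statement, I combine the hypothesis that $\calK$ is bounded with Corollary \ref{for:strata} to arrange, if necessary, that $\cD_\mL$ itself is bounded; part (2) of Lemma \ref{lem:dominate-polar} then removes the ``$\oplus 0$'' padding and produces the formula in terms of $\Omega,\Gamma$ directly.

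The main (minor) obstacle is the bookkeeping around the boundedness dichotomy: the ``bounded'' formula is truly a statement about a stratospherically bounded lift, and aligning the given $\mL$ with such a lift is precisely where Corollary \ref{for:strata} intervenes. Once that alignment is made, every implication used in both directions is an immediate consequence of the two cited lemmas, so the proof reduces to a clean citation.
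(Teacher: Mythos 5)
Your treatment of the first equality and of the general (unbounded) formula is correct and essentially matches the paper: Lemma \ref{lem:projDual} gives $\calK^\circ(n)=\{A:(A,0)\in\cD_\mL^\circ\}$, and part (1) of Lemma \ref{lem:dominate-polar}, applied to the $(g+h)$-tuple $(\Omega,\Gamma)$ and then specialized to elements of the form $(A,0)$, yields the formula involving $\Omega'$ and $\Gamma'$.

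The bounded case is where your argument has a genuine gap. You propose to invoke Corollary \ref{for:strata} to ``arrange'' that $\cD_\mL$ is bounded and then apply part (2) of Lemma \ref{lem:dominate-polar}. But Corollary \ref{for:strata} only produces \emph{some} bounded lift $\cD_{\tilde\mL}$ with $\tilde\mL=\mL_{(\tilde\Omega,\tilde\Gamma)}$ for a \emph{different} pair of coefficient tuples; substituting that lift would give a description of $\calK^\circ$ in terms of $\tilde\Omega,\tilde\Gamma$, not in terms of the $\Omega,\Gamma$ fixed in the statement. Moreover, the hypothesis is boundedness of the projection $\calK=\proj_x\cD_\mL$, which is strictly weaker than boundedness of $\cD_\mL$ itself (the lift may be unbounded in the $y$-direction), so part (2) of Lemma \ref{lem:dominate-polar} simply does not apply to the given $\mL$. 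The paper's proof signals explicitly that ``an additional argument'' is needed here, and that argument is substantive: given $(A,0)\in\cD_\mL^\circ$, one must show the unital map $I\mapsto I$, $\Omega_j\mapsto A_j$, $\Gamma_k\mapsto 0$ is completely positive. The crux is showing that whenever $I\otimes X_0+\sum_j\Omega_j\otimes X_j+\sum_k\Gamma_k\otimes Y_k\succeq 0$, necessarily $X_0\succeq 0$; this is exactly where boundedness of $\calK$ (not of $\cD_\mL$) is used, via a scaling argument that would otherwise produce arbitrarily large points of $\calK$. Once $X_0\succ 0$ one conjugates by $X_0^{-1/2}$ to land in $\cD_\mL$ and concludes from $(A,0)\in\cD_\mL^\circ$. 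Your reduction to a citation skips this entirely.
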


\begin{proof}
 From the proof of Theorem \ref{thm:polarPolar}, $\calK^\circ =\big\{A: (A,0)\in\cD_{\mL}^\circ\big\}$.
 Writing $\mL=\mL_\Delta$,  by Lemma \ref{lem:dominate-polar}
(whether or not $\cD_{\mL}$ is bounded), 
\[
 \cD_{\mL}^\circ=\big\{X: \exists \mu\in\N \mbox{ and an isometry } V \mbox{ such that } X= V^*(I_\mu\otimes \Delta^\prime)V\big\}.
\]

To obtain the stronger conclusion under the assumption that $\calK$ is bounded, 
an additional argument along the lines of \cite[\S 3.1]{HKM+} is needed; see also \cite[Theorem 2.12]{Za+}. 
Let $(A,0)\in\cD_\cL^\circ$.
We need to show that the unital linear map
\[
\begin{split}
\tau: {\rm span} \{I,\Omega_1,\ldots,\Omega_g,\Gamma_1,\ldots,\Gamma_h\} & \to {\rm span}\{I,A_1,\ldots,A_g\} \\
\Gamma_j\mapsto A_j, \quad \Gamma_k &\mapsto 0
\end{split}
\]
is completely positive. Assume 
\beq\label{eq:X0}
I\otimes X_0 + \sum_j \Omega_j \otimes X_j + \sum_k \Gamma_k \otimes Y_k \succeq0
\eeq
for some hermitian $X_0,\ldots,X_g,Y_1,\ldots,Y_h$. In particular, $X_0=X_0^*$.
We claim that $X_0\succeq0$. Suppose $X_0\not\succeq0$. By compressing we  may reduce to
$X_0\prec0$. From \eqref{eq:X0} it now follows that
\[
I\otimes tX_0 + \sum_j \Omega_j \otimes tX_j + \sum_k \Gamma_k \otimes tY_k \succeq0
\]
for every $t>0$. Since $tX_0\prec0$, this implies
\[
I\otimes I + \sum_j \Omega_j \otimes tX_j + \sum_k \Gamma_k \otimes tY_k \succeq0,
\]
whence
\[
(tX_1,\ldots,tX_g)\in\calK
\]
for every $t>0$. If $(X_1,\ldots,X_g)\neq0$ this contradicts the boundedness of $\calK$.
Otherwise $(X_1,\ldots,X_g)=0$, and 
\[
\sum_k \Gamma_k\otimes Y_k \succ -I\otimes X_0 \succ 0.
\]
Hence for any tuple $(X_1,\ldots,X_g)$ of hermitian matrices of the same size as the $Y_k$,
\[
I\otimes I + \sum_j \Omega_j \otimes X_j + \sum_k \Gamma_k \otimes tY_k \succeq0
\]
for some $t>0$. This again contradicts the boundedness of $\calK$. Thus $X_0\succeq0$.

By adding a small multiple of the identity to $X_0$ there is no harm in assuming $X_0\succ0$.
Hence multiplying \eqref{eq:X0} by $X_0^{-\frac12}$ from the left and right 
yields the tuple $X_0^{-\frac12}(X_1,\ldots,X_g$, $Y_1,\ldots,Y_h)X_0^{-\frac12}\in\cD_{\cL}$.
Since $(A,0)\in\cD_\cL^\circ$,
\[
\cL_{(A,0)}\big(X_0^{-\frac12}(X_1,\ldots,X_g,Y_1,\ldots,Y_h)X_0^{-\frac12}\big) 
= I\otimes I + \sum_k A_k\otimes X_0^{-\frac12} X_k X_0^{-\frac12} \succeq0.
\]
Multiplying with $X_0^{\frac12}$ on the left and right gives
\[
I\otimes X_0 + \sum_k A_k\otimes X_k \succeq0,
\]
as required.
\end{proof}

To each subset $\Gamma\subseteq\smatg$ we associate its interior
$\inter \Gamma=(\inter\Gamma(n))_{n\in\N}$, where $\inter\Gamma(n)$ denotes the interior
of $\Gamma(n)$ in the Euclidean space $\smatng$. We say $\Gamma$ has nonempty interior if there is $n$ with $\inter\Gamma(n)\neq\emptyset$.

\begin{cor}\label{cor:closeSpDrop}
 If  $\calK\subseteq\smatg$ is a bounded free spectrahedrop,  
 then $\overline\calK$ is a free spectrahedrop.
\end{cor}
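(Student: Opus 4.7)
The plan is to reduce to the situation where $0$ is in the interior of $\calK$ and then apply Theorem \ref{thm:polarPolar} twice, using that the bipolar produces the closure.

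First I would handle a possible degeneracy: if $\calK(1)$ has empty interior in $\R^g$, then $\calK(1)$ lies in an affine hyperplane $\{\ell=0\}$, and by \cite[Lemma 3.3]{Lasse} (as invoked in the proof of Corollary \ref{for:strata}) the same linear relation holds at every level. Solving for one variable drops the codimension, and by iterating I may assume $\calK(1)$ has nonempty interior in $\R^g$.

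Next, fix an interior point $\hat x\in\calK(1)$ and pass to the translated set $\tilde\calK=\calK-\hat x$ as in \eqref{eq:transl}. Since translation sends free spectrahedrops to free spectrahedrops (just shift the constant term of the defining pencil), $\tilde\calK$ is again a bounded free spectrahedrop, and now $0$ lies in the interior of $\tilde\calK$ by Lemma \ref{lem:interiorSame}. Since closure commutes with translation, it suffices to show $\overline{\tilde\calK}$ is a free spectrahedrop.

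Now apply Theorem \ref{thm:polarPolar}(1) to $\tilde\calK$: its polar dual $\tilde\calK^\circ$ is a stratospherically bounded free spectrahedrop, and by Proposition \ref{prop:poDual}\eqref{it:bounded2} it contains $0$ in its interior (because $\tilde\calK$ is bounded). In particular $\tilde\calK^\circ$ is itself a bounded free spectrahedrop with $0$ in its interior, so Theorem \ref{thm:polarPolar}(1) applies once more and tells us that $(\tilde\calK^\circ)^\circ$ is a stratospherically bounded free spectrahedrop, hence a free spectrahedrop.

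Finally, I would identify the bipolar with the closure. Since $\tilde\calK$ is a matrix convex set containing $0$, Proposition \ref{prop:poDual}\eqref{it:circconv} gives $(\tilde\calK^\circ)^\circ=\cmco\tilde\calK=\overline{\tilde\calK}$. Thus $\overline{\tilde\calK}$ is a free spectrahedrop, and translating back by $\hat x$ shows $\overline\calK$ is a free spectrahedrop. I expect the only real subtlety to be the initial codimension reduction, which is why I treat it first; after that the argument is essentially two applications of the polar duality theorem combined with the bipolar identity.
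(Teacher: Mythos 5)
Your proposal is correct and follows essentially the same approach as the paper's proof: the same codimension reduction, the same translation by an interior point $\hat x$, and the same invocation of the bipolar $\overline{\tilde\calK}=\tilde\calK^{\circ\circ}$ together with Theorem \ref{thm:polarPolar}. You merely unpack the paper's terse "by Theorem \ref{thm:polarPolar}" into its two constituent applications of part (1) (first to $\tilde\calK$, then to $\tilde\calK^\circ$, using Proposition \ref{prop:poDual}\eqref{it:bounded2} in between), which is exactly what the paper's argument implicitly does.
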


\begin{proof}
As in the proof of Corollary \ref{for:strata}, we may assume 
the interior of $\calK$ is nonempty.
This implies there is a $\hat x\in\R^g$ in the interior of $\calK(1)$. Consider  the translation $\tilde\calK=\calK-\hat x$ as in \eqref{eq:transl}. This is a free spectrahedrop containing $0$ in its interior.
Hence its closure $\overline{\tilde\calK} = \tilde\calK^{\circ\circ}$ is a free spectrahedrop by Theorem \ref{thm:polarPolar}. Thus so is $\overline\calK = \overline{\tilde\calK} +  \hat x$.
\end{proof}

\begin{cor}
 \label{cor:dual of drop is drop}
 If  $\calK\subseteq\smatg$ is a free spectrahedrop with nonempty interior,
 then $\calK^\circ$ is a free spectrahedrop.
\end{cor}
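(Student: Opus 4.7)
The plan is to adapt the translation strategy used in the proof of Corollary~\ref{cor:closeSpDrop}: shift an interior point to the origin, appeal to the monic lift of the translated spectrahedrop, and then recast the defining condition of $\calK^\circ$ as an LMI system via the perfect Positivstellensatz together with the Choi-matrix encoding of cp maps.

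First, exactly as in Corollary~\ref{for:strata}, I would reduce to the case where $\calK(1)$ has nonempty interior by eliminating variables on which $\calK$ is constant. Choose $\hat x \in \R^g$ in the interior of $\calK(1)$, and set $\tilde\calK := \calK - \hat x$. Then $\tilde\calK$ is a free spectrahedrop containing $0$ in its interior (Lemma~\ref{lem:interiorSame}), so by Lemma~\ref{lem:boundedmonicLift} there is a monic pencil $\tilde\mL(x,y) = I + \sum_j \tilde\Omega_j x_j + \sum_k \tilde\Gamma_k y_k$ with $\tilde\calK = \proj_x \cD_{\tilde\mL}$.

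Next I would unwind the polar dual condition. For $A \in \smatmg$, we have $A \in \calK^\circ(m)$ iff $\mL_A(X) \succeq 0$ for all $X \in \calK$. Writing $X = \tilde X + \hat x$ with $(\tilde X, \tilde Y) \in \cD_{\tilde\mL}$, this becomes
\[
 Q_A(\tilde X, \tilde Y) \ :=\ \bigl(I - \textstyle\sum_j \hat x_j A_j\bigr)\otimes I - \sum_j A_j \otimes \tilde X_j \ \succeq\ 0 \quad\text{for all } (\tilde X,\tilde Y)\in \cD_{\tilde\mL}.
\]
Since $Q_A$ has degree one in $(\tilde X,\tilde Y)$, the perfect Convex Positivstellensatz (Theorem~\ref{thm:convexPos}), whose degree bound forces the weights to be scalars, gives that $Q_A \succeq 0$ on $\cD_{\tilde\mL}$ iff there exist matrices $S, V_1,\ldots,V_\mu$ with $Q_A = S^*S + \sum_\ell V_\ell^*\, \tilde\mL\, V_\ell$. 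Matching constant terms and the coefficients of $\tilde x_j$ and $\tilde y_k$ yields
\[
 I - \textstyle\sum_j \hat x_j A_j = S^*S + \sum_\ell V_\ell^* V_\ell,\quad A_j = -\sum_\ell V_\ell^*\tilde\Omega_j V_\ell,\quad 0 = \sum_\ell V_\ell^*\tilde\Gamma_k V_\ell.
\]

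Finally, setting $T := \sum_\ell V_\ell^* V_\ell \succeq 0$, these relations say exactly that the linear map $\phi$ on $\Span\{I,\tilde\Omega_j,\tilde\Gamma_k\}$ defined by $I\mapsto T$, $\tilde\Omega_j\mapsto -A_j$, $\tilde\Gamma_k\mapsto 0$ is completely positive, subject to the additional LMIs $T\succeq 0$ and $T + \sum_j \hat x_j A_j \preceq I$. By Theorem~\ref{thm:cpstuff}, existence of such a cp $\phi$ is equivalent to the existence of a positive semidefinite Choi matrix $C$, a condition that, as in Theorem~\ref{thm:interp} and the proof of Proposition~\ref{prop:liftPolar}, unfolds into LMIs in the entries of $A$, $T$, and $C$. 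Thus $\calK^\circ$ is the coordinate projection onto the $A$-variables of an explicit free spectrahedron in the variables $(A,T,C)$, i.e., $\calK^\circ$ is a free spectrahedrop.

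The main obstacle is that polar duality does not transform linearly under translation: while $\tilde\calK^\circ$ is accessible directly via Theorem~\ref{thm:polarPolar}(1), $\calK^\circ$ cannot be obtained from $\tilde\calK^\circ$ by any simple affine substitution. Bridging this gap requires exhibiting the polar dual condition as positivity of an essentially non-monic pencil $Q_A$ on the lift $\cD_{\tilde\mL}$, and then using the Positivstellensatz together with the Choi-matrix/cp-interpolation reformulation to encode this positivity as LMIs in $A$ and judiciously chosen auxiliary variables.
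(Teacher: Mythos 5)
Your proof is correct, and it reaches the same engine (the degree-bounded Positivstellensatz of Theorem~\ref{thm:convexPos} followed by the Choi-matrix/cp-interpolation encoding) as the paper's, but via a slightly different reduction. The paper first invokes Lemma~\ref{lem:projDual} to write $\calK^\circ = \{B : (B,0)\in\cD_{L_A}^\circ\}$, which reduces the problem to showing that the polar dual of a \emph{spectrahedron} (in $g+h$ variables) is a spectrahedrop; the extra $y$-variables are then handled at the very end simply by intersecting $\cD_{L_A}^\circ$ with the slice $\{y=0\}$. The paper then normalizes $L_A$ to a monic pencil $\mL$ by a $L_0^{-1/2}$ sandwich and applies the Positivstellensatz to the non-monic pencil $L'(y) = (I-\sum\Omega_j\hat x_j) - \sum\Omega_j y_j$. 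You instead keep the spectrahedrop lift $\tilde\mL(x,y)$ intact and apply the Positivstellensatz to your $Q_A$ directly on $\cD_{\tilde\mL}$; the constraint on the $\tilde\Gamma_k$ coefficients of $\tilde\mL$, which the paper never has to see because it already reduced to the pure-$x$ case, shows up in your coefficient-matching as $\sum_\ell V_\ell^*\tilde\Gamma_k V_\ell = 0$. Both routes land on the same cp-interpolation problem (your version is essentially the translated analogue of the argument in Proposition~\ref{prop:liftPolar}), and both encode it as a free spectrahedral lift via the Choi matrix. The trade-off: the paper's slice reduction makes the final Choi-matrix bookkeeping a bit cleaner and isolates the genuinely hard case (a single spectrahedron with an interior point that may not be $0$), whereas your version proves the spectrahedrop case in one pass at the cost of carrying the extra $\tilde\Gamma_k$-constraints. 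One small remark: your preliminary step reducing to $\inter\calK(1)\ne\emptyset$ is harmless but unnecessary here, since the hypothesis ``nonempty interior'' together with the fact (used in Corollary~\ref{for:strata}, citing \cite[Lemma~3.3]{Lasse}) that a linear functional vanishing on $\calK(1)$ vanishes on all of $\calK$ already forces $\inter\calK(1)\ne\emptyset$.
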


\begin{proof}
 We are assuming  $\calK= \proj \cD_{L_A}.$ Applying 
 Lemma  \ref{lem:projDual} gives
 \[\calK^\circ =  \{B\in\smatg :   (B,0) \in \cD_{L_A}^\circ \}.\]
So if we prove $\cD_{L_A}^\circ$ is a free spectrahedrop, then
\[
\calK^\circ = \proj \Big(  \cD_{L_A}^\circ \bigcap \big(  \smatg\otimes\{0\}^h \big) \Big)
\]
is the intersection of two free spectrahedrops,
so is a free spectrahedrop. Thus without loss of generality we may take $\calK= \cD_{L_A}$ and proceed.
We will demonstrate the corollary holds in this case as a consequence of the Convex Positivstellensatz,
Theorem \ref{thm:convexPos}.

 Suppose $\hat x\in\R^g$ is in
the interior of $\cD_{L_A}$. Without loss of generality we may assume 
\[
L_0= L_A(\hat x)\succ0,
\]
cf.~\cite{HV07}. Define the monic linear pencil
\[
\begin{split}
\mL(y) & = L_0^{-\frac12} L(y+\hat x) L_0^{-\frac12}  \\
& = I +\sum_{j =1}^g  L_0^{-\frac12} A_j  L_0^{-\frac12}  y_j.
\end{split}
\]

By definition, a tuple $\Omega\in\smatg$ is in $\cD_{L_A}^\circ$ if and only if
$\cD_{L_A}\subseteq\cD_{\mL_\Omega}$. Equivalently, with
\[
L(y)= \big( I + \sum_{j=1}^g \Omega_j \hat x_j\big) + \sum_{j=1}^g \Omega_j y_j,
\]
$\cD_{\mL} \subseteq \cD_{L}$. By Theorem \ref{thm:convexPos}, there is a 
$S\succeq0$ and matrices $V_k$ with
\[
L(y) = S + \sum_k V_k^* \mL(y) V_k.
\]
That is,
\[
\big( I + \sum_{j=1}^g \Omega_j \hat x_j\big) \succeq \sum_k V_k^* V_k,
\quad \text{and} \quad
\Omega_j = \sum_k V_k^*  L_0^{-\frac12} A_j  L_0^{-\frac12} V_k , \; j=1,\ldots,g.
\]
Equivalently, there is a completely positive mapping $\Phi$ satisfying
\begin{align}
\Phi\big(L_0^{-\frac12} A_j  L_0^{-\frac12}\big)& =\Omega_j, \; j=1,\ldots, g\label{eq:choi1}  \\
\Phi(I) & \preceq I + \sum_j \Omega_j \hat x_j.\label{eq:choi2}
\end{align}
As in Theorem \ref{thm:interp} we now employ the Choi matrix $C$. Conditions \eqref{eq:choi1}
translate into linear constraints on the block entries $C_{ij}$ of $C$. Similarly, \eqref{eq:choi2} transforms
into an LMI constraint on the entries of $C$. Thus $C$ provides a free spectrahedral lift
of $\cD_{L_A}^\circ$.
\end{proof}

\subsection{The Free Convex Hull of a Union}
 
 In this subsection we prove that the convex hull of a union of
 free spectrahedrops is again a free spectrahedrop.
 
 \begin{prop}
 \label{prop:hull of union}
 If $\cS_1,\ldots, \cS_t\subseteq\smatg$ are stratospherically bounded 
 free spectrahedrops and each contains $0$ in its interior,
 then 
$\cmco(\cS_1\cup\cdots\cup \cS_t)$ is a stratospherically bounded free spectrahedrop with $0$ in its interior.
 \end{prop}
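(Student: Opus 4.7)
The plan is to prove this by duality: dualize, intersect, and dualize back. All the pieces are in place via Theorem \ref{thm:polarPolar} and the bipolar computation in Proposition \ref{prop:poDual}\eqref{it:circconv}.

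First I would pass to polar duals. Each $\cS_i$ is stratospherically bounded, hence in particular bounded, and contains $0$ in its interior. By Theorem \ref{thm:polarPolar} (and Proposition \ref{prop:poDual}\eqref{it:bounded},\eqref{it:bounded2}), the polar $\cS_i^\circ$ is a stratospherically bounded free spectrahedrop containing $0$ in its interior, and is itself bounded. Next, form
\[
T = \bigcap_{i=1}^t \cS_i^\circ.
\]
If $\cS_i^\circ = \proj_{x} \cD_{\mL^{(i)}}$ for monic pencils $\mL^{(i)}(x, y^{(i)})$ in disjoint sets of auxiliary variables $y^{(i)}$, then $T = \proj_x \cD_{\mL}$ for the direct sum $\mL = \bigoplus_i \mL^{(i)}$, so $T$ is a free spectrahedrop. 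Because $0$ is in the interior of each $\cS_i^\circ$ (Lemma \ref{lem:interiorSame}), it is in the interior of $T$, and boundedness of each $\cS_i^\circ$ forces $T$ to be bounded.

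Now I would apply Theorem \ref{thm:polarPolar} again to the free spectrahedrop $T$: since $0$ is in the interior of $T$, the polar $T^\circ$ is a stratospherically bounded free spectrahedrop, and since $T$ is bounded, $0$ lies in the interior of $T^\circ$.

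The last step is to identify $T^\circ$ with the desired hull. Directly from the definition of the polar dual, one has
\[
(\cS_1 \cup \cdots \cup \cS_t)^\circ = \bigcap_{i=1}^t \cS_i^\circ = T,
\]
so $T^\circ = (\cS_1 \cup \cdots \cup \cS_t)^{\circ\circ}$. Each $\cS_i$ contains $0$ (since $0$ is in its interior), so by Proposition \ref{prop:poDual}\eqref{it:circconv} applied to $\calK = \cS_1 \cup \cdots \cup \cS_t$, the bipolar equals $\cmco(\cS_1 \cup \cdots \cup \cS_t)$. This gives the identification $T^\circ = \cmco(\cS_1 \cup \cdots \cup \cS_t)$, completing the proof.

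The main obstacle, such as it is, is purely bookkeeping: one must check that the intersection $T$ genuinely inherits both boundedness and the interior-point property, because Theorem \ref{thm:polarPolar} needs the $0$-in-interior hypothesis to conclude stratospheric boundedness of the second dual, and it needs $\cS_i^\circ$ bounded to conclude $T$ is bounded. Both follow from the fact that $0 \in \inter \cS_i$ \emph{and} $\cS_i$ is bounded for every $i$, which is exactly what the hypotheses of the proposition supply.
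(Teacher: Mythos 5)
Your proof is correct and follows exactly the same strategy as the paper's: dualize each $\cS_i$ via Theorem \ref{thm:polarPolar}, intersect the duals, dualize back, and identify the result via the bipolar formula. You have simply filled in the details (the direct-sum pencil argument for intersections, the interior and boundedness bookkeeping) that the paper compresses into the phrase ``these properties are preserved under a finite intersection.''
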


\begin{proof}
Let $\calK=\cS_1\cup\cdots\cup \cS_t$. Then 
\[
\calK^\circ = \cS_1^\circ \cap\cdots\cap \cS_t^\circ.
\]
Since each $\cS_j$ is a stratospherically bounded free spectrahedrop with $0$ in interior,
the same holds true for 
$\cS_j^\circ$
by Theorem \ref{thm:polarPolar}.
It is clear that these properties are preserved under a finite intersection,
so $\calK^\circ$ is again a stratospherically bounded free spectrahedrop with $0$ in its interior. 
Hence, by Proposition \ref{prop:poDual}), 
\[
(\calK^{\circ})^{\circ} = \cmco \calK 
\]
is a stratospherically bounded free spectrahedrop with $0$ in its interior
by Theorem \ref{thm:polarPolar}.
\end{proof}

\section{Positivstellensatz for Free Spectrahedrops}\label{sec:PosSS}
This section focuses on  polynomials positive on a free spectrahedrop, 
extending our Positivstellensatz for  free polynomials positive on free spectrahedra,
Theorem  \ref{thm:convexPos}, to a
Convex Positivstellensatz for free spectrahedrops, Theorem \ref{thm:posDrop}.

Let $\mL$ denote a monic linear pencil of size $d$,
\beq\label{eq:d-pencil}
  \mL (x,y)= I +\sum_{j=1}^g \Omega_j x_j +\sum_{k=1}^h \Gamma_k y_k,
  \eeq
and let $\calK=\proj_x\cD_{\mL}$.  If $\mu$ is a positive integer and $q_\ell\in \C^{d\times\mu}\ax$ (and so are polynomials in the $x$ variables only),
and if  $\sum_{\ell} q_\ell(x)^* \Gamma_k q_\ell(x) =0$ for each $k$, then
\[
 \sum_{\ell} q_\ell^*(x) \mL(x,y) q_\ell(x) 
\]
 is a polynomial in the $x$ variables and is thus in $\C^{\mu\times \mu}\ax$. 
 For positive integers $\mu$ and $r$ we define the \df{truncated quadratic module}
 in $\C^{\mu\times \mu}\ax$  associated to 
$\mL$ and $\calK$ by 
\[ 
M_{x}^\mu(\mL)_r =
\Big\{ \sum_\ell q_\ell^* \mL q_\ell + \sigma\; : \; q_\ell \in \C^{d\times\mu} \ax_r, \,
\sigma \in\Sigma^\mu_{r}\langle x \rangle, \,\sum_\ell q_\ell^* \Gamma_k q_\ell =0 \text{ for all } k\Big\}.
\] 
 Here $\Sigma^\mu_{r}= \Sigma^\mu_{r}\langle x \rangle$ denotes the set
of all sums
of hermitian squares $h^*h$ for $h\in\C^{\mu\times\mu}\ax_r$.
 It is easy to see 
$M_{x}^\mu(\mL)= \bigcup_{r\in\N} M_{x}^\mu(\mL)_r$ is a 
quadratic module in $\C^{\mu\times\mu}\ax$. 

The main result of this section is the following Positivstellensatz:

\begin{thm}\label{thm:posDrop}
A symmetric polynomial $p\in\C^{\mu\times\mu}\ax_{2r+1}$ is positive
 semidefinite on $\calK$ if and only if  $p\in M_{x}^\mu(\mL)_{r}$.
\end{thm}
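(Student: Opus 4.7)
The proof splits into the two implications. The $(\Leftarrow)$ direction is an immediate evaluation: given $p = \sigma + \sum_\ell q_\ell^* \mL q_\ell$ in the cone $M_x^\mu(\mL)_r$ and any $X \in \calK$, I choose $Y$ with $\mL(X,Y)\succeq 0$ and evaluate the identity $p = \sigma + \sum_\ell q_\ell^* \mL q_\ell$ at $(X,Y)$. Because of the constraint $\sum_\ell q_\ell^* \Gamma_k q_\ell = 0$, the $Y$-dependent contributions from the pencil $\mL(X,Y)$ cancel in $\sum_\ell q_\ell(X)^* \mL(X,Y) q_\ell(X)$, leaving a sum of manifestly positive semidefinite matrices; hence $p(X) \succeq 0$.

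For the harder $(\Rightarrow)$ direction, I first view $p$ as an element of $\C^{\mu\times\mu}\langle x,y\rangle_{2r+1}$ that happens to be $y$-free. Because $(X,Y) \in \cD_\mL$ forces $X \in \calK$, positivity of $p$ on $\calK$ lifts to positivity on $\cD_\mL$, and Theorem \ref{thm:convexPos} applied to the monic pencil $\mL(x,y)$ supplies a representation
\[
p = s(x,y)^* s(x,y) + \sum_j f_j(x,y)^* \mL(x,y) f_j(x,y)
\]
with $s$ and $f_j$ of degree at most $r$. The content of Theorem \ref{thm:posDrop} is that this representation can be replaced by one using only $x$-polynomials, at the cost of imposing the auxiliary vanishing condition $\sum_\ell q_\ell^* \Gamma_k q_\ell = 0$.

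I plan to accomplish this upgrade by Hahn-Banach separation coupled with a GNS/dilation argument. Suppose $p \notin M_x^\mu(\mL)_r$; since this is a convex cone in the finite dimensional space $\C^{\mu\times\mu}\ax_{2r+1}$, separation yields a linear functional $L$ with $L(p) < 0$ and $L \ge 0$ on $M_x^\mu(\mL)_r$. In particular $L \ge 0$ on $\Sigma_r^\mu\ax$, so the standard GNS construction produces a finite-dimensional Hilbert space $\mathcal H$, a distinguished vector $\xi$, and self-adjoint operators $\hat X_1,\dots,\hat X_g$ on $\mathcal H$ with $L(q) = \langle q(\hat X)\xi,\xi\rangle$ for every $q \in \C^{\mu\times\mu}\ax_{2r+1}$. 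The main obstacle, which I expect to be the hard part, is to show that $\hat X \in \calK$, i.e., that $\hat X$ can be completed to a pair $(\hat X,\hat Y)\in \cD_\mL$. Here the constraint $\sum_\ell q_\ell^* \Gamma_k q_\ell = 0$ in the definition of $M_x^\mu(\mL)_r$ plays the role of a Lagrange multiplier condition, ensuring that the partially defined map $I\mapsto I$, $\Omega_j\mapsto \hat X_j$ extends to a completely positive map sending additionally $\Gamma_k \mapsto \hat Y_k$, with $\mL(\hat X,\hat Y)\succeq 0$. This cp-extension step is exactly the kind of interpolation handled by the machinery of Section \ref{sec:cp3} (Theorem \ref{thm:interp} and Proposition \ref{prop:liftPolar}), and will be the engine driving the construction of $\hat Y$. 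Once $(\hat X,\hat Y)\in \cD_\mL$ is produced, $\hat X \in \calK$ forces $p(\hat X)\succeq 0$ by hypothesis, and so $L(p) = \langle p(\hat X)\xi,\xi\rangle \ge 0$, contradicting $L(p) < 0$.
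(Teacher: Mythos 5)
Your forward direction is right, and your high-level plan—Hahn--Banach separation against the closed cone $M_x^\mu(\mL)_r$, then a GNS construction from the separating functional—is exactly the paper's strategy (the closedness you invoke is Proposition~\ref{prop:closed}, and the separation/GNS combination is Proposition~\ref{prop:gns}). You have also correctly located the one genuinely hard step: showing that the GNS tuple $\hat X$ lands in $\overline{\calK}$.

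But the device you propose for that step does not work, because it conflates $\cD_\mL$ with its polar dual. A unital cp map $\Phi:M_d\to M_N$ with $\Phi(\Omega_j)=\hat X_j$ and $\Phi(\Gamma_k)=\hat Y_k$ gives, via Stinespring/Choi, $\hat X_j=V^*(I_\mu\otimes\Omega_j)V$ and $\hat Y_k=V^*(I_\mu\otimes\Gamma_k)V$ for an isometry $V$. By Lemma~\ref{lem:dominate-polar} this is precisely the statement $(\hat X,\hat Y)\in\cD_{\mL}^{\circ}$ --- membership in the \emph{polar dual} --- and says nothing about $\mL(\hat X,\hat Y)\succeq 0$, i.e. membership in $\cD_\mL$ itself. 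So your cp-interpolation step produces the wrong containment, and the vanishing constraints $\sum_\ell q_\ell^*\Gamma_k q_\ell=0$ are not used the way you describe. (The constraints belong to the description of $\calK^\circ$ via Corollary~\ref{cor:polardualofdrop}, not of $\calK$.)

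What the paper actually does in Proposition~\ref{prop:gns} is run the argument by contradiction. If $\hat X\notin\overline{\calK}$, the Effros--Winkler Theorem~\ref{prop:sharp} gives a monic pencil $\mL_\Lambda$ separating $\hat X$ from $\overline\calK$, hence $\Lambda\in\calK^\circ$. Corollary~\ref{cor:polardualofdrop} then represents $\Lambda=\sum W_j^*\Omega W_j$ with $\sum W_j^*\Gamma W_j=0$. Feeding a witness vector for $\mL_\Lambda(\hat X)\not\succeq 0$ through the GNS identification $v_i=p_i(\hat X)\gamma$ manufactures an explicit $q=\sum_\ell \vec p_\ell^*\,\mL\,\vec p_\ell\in M_x^\mu(\mL)_r$ with $\lambda(q)<0$ --- the vanishing condition $\sum_\ell\vec p_\ell^*\Gamma_k\vec p_\ell=0$ holds precisely because $\sum W_j^*\Gamma W_j=0$ --- contradicting nonnegativity of $\lambda$ on the cone. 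In short: the cp-interpolation machinery enters through the characterization of $\calK^\circ$ used inside this contrapositive, not as a direct construction of the lift $\hat Y$. Your initial observation that $p$, regarded as $y$-free, is nonnegative on $\cD_\mL$ and hence admits a Theorem~\ref{thm:convexPos} certificate with $(x,y)$-polynomials is true but is not what the paper exploits, and as you note it does not by itself yield the $y$-free certificate.
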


\begin{rem}\rm Several remarks are in order. 
\mbox{}\par
\ben[\rm (1)]
\item
In case there are no $y$-variables in $\mL$, Theorem \ref{thm:posDrop} reduces
to the Convex Positivstellensatz of \cite{HKM12}.
\item
If $r=0$, i.e., $p$ is linear, then Theorem \ref{thm:posDrop} 
reduces to 
Corollary \ref{cor:polardualofdrop}.
\item
A Positivstellensatz for commutative polynomials \emph{strictly positive} on spectrahedrops was established by Gouveia and Netzer in \cite{GN11}. A major distinction is that
the degrees of the $q_i$ and $\sigma$ in the commutative theorem behave very badly.
\item 
Observe that $\calK$ is in general not closed. Thus Theorem \ref{thm:posDrop} yields a ``perfect'' Positivstellensatz for certain non-closed sets.
\qedhere
\een
\end{rem}

\subsection{Proof of Theorem {\rm \ref{thm:posDrop}}}

We begin with some auxiliary results.

\begin{prop}\label{prop:closed}
With $\mL$ a monic  linear pencil as in \eqref{eq:d-pencil}, 
$M_x^\mu(\mL)_r$ is a closed convex cone in
the set of all symmetric polynomials in
 $\C^{\mu\times\mu}\ax_{2r+1}$.
\end{prop}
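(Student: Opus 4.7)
The convex-cone property is routine. Given representations of $p_1,p_2 \in M_x^\mu(\mL)_r$, concatenating the $q_\ell$-tuples and adding the $\sigma$-pieces yields a representation of $p_1+p_2$ (the $\Gamma_k$-constraint is additive and so passes through), and positive scalar multiplication is handled by absorbing $\sqrt c$ into each $q_\ell$ and into the summands of the SOS decomposition of $\sigma$.

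The real content is closedness. My plan is to follow the compactness template standard for truncated quadratic modules in free real algebraic geometry, as used in the proof of Theorem \ref{thm:convexPos}. The ambient space $V$ of symmetric elements of $\C^{\mu\times\mu}\ax_{2r+1}$ is finite-dimensional, so I need only show that if $p_n \to p$ in $V$ with
\[
p_n \;=\; \sum_{\ell=1}^{N_n} q_{\ell,n}^* \mL q_{\ell,n} + \sum_{j=1}^{M_n} h_{j,n}^* h_{j,n}, \qquad \sum_{\ell} q_{\ell,n}^* \Gamma_k q_{\ell,n} = 0 \text{ for all } k,
\]
then $p\in M_x^\mu(\mL)_r$. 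A Carath\'eodory-type argument inside the cone of such representations (which sits in a finite-dimensional ambient space) lets me bound $N_n, M_n \le N^*$ uniformly in $n$, with $N^*$ depending only on $\dim V$ and on the dimension of the space in which the $\Gamma_k$-constraints live. The task then reduces to extracting a convergent subsequence of the $(q_{\ell,n}, h_{j,n})$ and passing to the limit.

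The crux will be an a priori estimate of the form
\[
\sum_\ell \|q_{\ell,n}\|^2 + \sum_j \|h_{j,n}\|^2 \;\le\; C\,\|p_n\|,
\]
where $\|\cdot\|$ is any fixed norm on coefficient tuples. This coerciveness comes from the monic character of $\mL = I + \sum_j \Omega_j x_j + \sum_k \Gamma_k y_k$: the identity term of $\mL$ contributes the fully coercive piece $\sum q_\ell^* q_\ell + \sum h_j^* h_j$, itself a sum of hermitian squares, while the indefinite cross terms $\sum q_\ell^* \Omega_j x_j q_\ell$ are bilinear in the $q_\ell$ and can be absorbed by a Cauchy--Schwarz estimate; the $\Gamma_k y_k$ cross terms vanish by hypothesis. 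Concretely, I would pick a small relatively compact scalar-matrix neighborhood $U$ of the origin on which $\mL \succeq \tfrac12 I$; the representation then gives $p_n(X) \succeq \tfrac12 \bigl(\sum_\ell q_{\ell,n}(X)^* q_{\ell,n}(X) + \sum_j h_{j,n}(X)^* h_{j,n}(X)\bigr)$ on $U$, and since free polynomials of degree $\le r$ are determined by their evaluations on a sufficiently large finite set of tuples, a uniform bound on $p_n$ converts into a bound on the coefficients of the $q_{\ell,n}$ and $h_{j,n}$.

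Once the coerciveness bound is established, the rest is standard. Extract subsequences $q_{\ell,n}\to q_\ell$ and $h_{j,n}\to h_j$; the constraint $\sum_\ell q_\ell^* \Gamma_k q_\ell = 0$ is polynomial in the coefficients and so survives the limit, and continuity of the representation map $(q,h)\mapsto \sum_\ell q_\ell^* \mL q_\ell + \sum_j h_j^* h_j$ yields $p \in M_x^\mu(\mL)_r$. The main obstacle is the coerciveness estimate: a priori the cross terms from $\mL$ are sign-indefinite, so an uncontrolled representation could have arbitrarily large $q_\ell$'s and $h_j$'s canceling to yield a bounded $p_n$, and without the estimate the whole compactness argument collapses.
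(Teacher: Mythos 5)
Your proof is correct and follows essentially the same strategy as the paper's: bound the number of summands by a Carath\'eodory argument in the ambient finite-dimensional space, exploit that $\mL\succeq\frac12 I$ on a small free ball around $0$ to get the a priori bound $\sum_\ell q_{\ell,n}(X)^*q_{\ell,n}(X) + \sum_j h_{j,n}(X)^*h_{j,n}(X)\preceq 2p_n(X)$ on that ball, convert this to a bound on coefficient norms (the paper does this via the norm $\vertiii{\cdot}$, which is nondegenerate by the absence of polynomial identities holding on matrices of all sizes), and then extract a convergent subsequence and pass to the limit. Two small remarks: the Cauchy--Schwarz absorption you mention is a detour — once you evaluate on the ball where $\mL\succeq\frac12 I$ the cross terms are controlled automatically, which is what your ``concretely'' paragraph actually does — and your explicit mention that the Carath\'eodory bound must account for the additional linear $\Gamma_k$-constraints (so the cone really lives in $\mathrm{Sym}\,\C^{\mu\times\mu}\ax_{2r+1}$ times the constraint space) is a detail the paper's proof leaves implicit.
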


The convex cone property is obvious.
 For the proof  that this cone is closed,
 it is convenient to introduce a norm compatible
 with $\mL$. 

\def\cB{\mathcal B}
\def\bbS{\mathbb S}
\def\bes{\[}
\def\ees{\]}

 Given $\eps>0$, let 
\bes
\cB_\eps^g(n): =  \big\{ X\in\bbS_n^g : \|X\|\le \eps \big\}.
\ees
 There is an
 $\eps>0$ such that for all $n\in\N$, if $(X,Y)\in\bbS_n^{g+h}$ and $\|(X,Y)\|\le \eps$, 
then  $\mL(X,Y)
 \succeq \frac12$.
 In particular, $\cB^{g+h}_{\eps} \subseteq \cD_{\mL}$. 
Using this $\eps$ we norm matrix polynomials in $g+h$ variables by
\beq\label{eq:normMe}
  \vertiii{p(x,y)}:= \max\big\{\|p(X,Y)\| : (X,Y)\in\cB_\eps^{g+h} \big\}.
\eeq
(Note that by the nonexistence of polynomial identities for matrices of all sizes, $\vertiii{p(x,y)}=0$ iff $p(x,y)=0$;
cf.~\cite[\S2.5, \S1.4]{Row80}.
Furthermore,
 on the right-hand side of \eqref{eq:normMe}
the maximum is attained because
the bounded free semialgebraic set $\cB_\eps^{g+h}$ is levelwise compact and matrix convex; see \cite[Section 2.3]{HM04a} for details).
  Note that if  $f\in\C^{d \times \mu}\ax_{\beta}$ and if $\vertiii{ f(x)^* \mL(x,y) f(x)} \le
  N^2$,  then $\vertiii{f^* f}\le 2N^2$.

\begin{proof}[Proof of Proposition {\rm\ref{prop:closed}}]  
  Suppose $(p_n)$ is a sequence from $M_x^\mu(\mL)_r$
  that converges to some symmetric 
  $p\in\C^{\mu\times \mu}\ax$ of degree at most $2r+1$.   
  By Caratheodory's convex hull theorem (see e.g.~\cite[Theorem I.2.3]{Bar02}), there is an $M$ 
 such that
  for each $n$ there exist matrix-valued polynomials 
  $r_{n,i}\in\C^{\mu\times \mu}\ax_{r}$ and 
  $t_{n,i}\in\C^{d \times \mu}\ax_{r}$ such that
\[
   p_n = \sum_{i=1}^{M} r_{n,i}^* r_{n,i}
         + \sum_{i=1}^M t_{n,i}^* \, \mL(x,y)\, t_{n,i}.
\]
  Since $\vertiii{p_n} \le N^2$, it follows that $\vertiii{r_{n,i}} \le
  N$ and likewise $\vertiii{t_{n,i}^* \mL(x,y) t_{n,i}}\le N^2$.  
  In view of the
  remarks preceding the proof, we obtain $\vertiii{t_{n,i}}
  \le \sqrt{2} N$ for all $i,n$. 
  Hence for each $i$, the sequences
  $(r_{n,i})$ and $(t_{n,i})$ are bounded in $n$.  They thus have
  convergent subsequences.   
Passing to one of these subsequential limits finishes the  proof. 
\end{proof}

Next is a variant of the Gelfand-Naimark-Segal (GNS) construction.
 
\def\cX{\mathcal X}
\def\mt{\nu}

\begin{prop}
 \label{prop:gns}
  If $\la:\C^{\mt\times \mt}\ax_{2k+2}\to \C$ is a linear functional  
  that  is nonnegative on $\Sigma^\mt_{k+1}$ and  positive  on
  $\Sigma^\mt_k\setminus\{0\}$, 
  then there exists a tuple $X=(X_1,\dots,X_g)$ of
  hermitian operators on a Hilbert space $\cX$ of
  dimension at most $\mt \sigma_\#(k)=\mt \dim \C\ax_k$ and a vector 
  $\ga\in \cX^{\oplus \mt}$ such that
\begin{equation}
 \label{eq:LorX}
  \la(f)= \langle f(X)\ga,\ga\rangle 
\end{equation}
  for all $f\in\C^{\mt\times \mt}\ax_{2k+1}$, where $\langle\textvisiblespace, \textvisiblespace\rangle$
  is the inner product on $\cX$.  
  Further, if
  $\la$ is nonnegative on $M_x^{\mt}(\mL)_{k}$, then $X$ is in the
closure $\overline\calK$ of the
   free spectrahedrop $\calK$ coming
  from $\mL$.

  Conversely, if $X=(X_1,\dots,X_g)$ is a tuple of
  symmetric operators on a Hilbert space $\cX$ of
  dimension $N$,  
  the vector $\ga\in\cX^{\oplus \mt},$ and $k$ is a positive
  integer, then the linear
  functional  $\la:\C^{\mt\times \mt}\ax_{2k+2}\to\C$ defined by 
\[
  \la(f)= \langle f(X)\ga,\ga\rangle 
\]
  is nonnegative on $\Sigma^\mt_{k+1}$. 
  Further, if $X\in \overline\calK$, then $\la$ is nonnegative also on $M_x^{\mt}(\mL)_{k}$. 
\end{prop}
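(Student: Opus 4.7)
The proof is a matrix-valued GNS construction standard in truncated moment problems; the converse is direct evaluation, and the forward direction reduces, once $X$ and $\gamma$ are built, to the assertion $X \in \overline{\calK}$, which is the main obstacle. First I would build the Hilbert space: set $\cX := \C\ax_k \otimes \C^\nu$, a vector space of dimension $\nu\sigma_\#(k)$, equipped with the sesquilinear form
\[
\langle p \otimes e_i,\; q \otimes e_j\rangle_\lambda \;:=\; \lambda(q^* p\, E_{ji}),
\]
where $E_{ji} \in M_\nu(\C)$ is the matrix unit. For $\xi = \sum_i u_i \otimes e_i$, arranging $u = (u_1,\dots,u_\nu)$ as a row vector gives $\langle \xi,\xi\rangle_\lambda = \lambda(u^* u)$ with $u^* u \in \Sigma^\nu_k$, so strict positivity of $\lambda$ on $\Sigma^\nu_k \setminus \{0\}$ makes $\cX$ a Hilbert space. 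Running the same construction with $k$ replaced by $k+1$ and quotienting nullvectors yields a Hilbert space $H'$ into which $\cX$ embeds isometrically. Define $X_j := P_\cX \circ M_{x_j}$, where $M_{x_j}$ is left multiplication by $x_j$ viewed as $\cX \to H'$ and $P_\cX : H' \to \cX$ is the orthogonal projection; because $x_j$ is a symmetric variable, $M_{x_j}$ is symmetric on $H'$, so each $X_j$ is hermitian on $\cX$. Put $\xi_i := 1 \otimes e_i$ and $\gamma := \sum_i e_i \otimes \xi_i \in \cX^{\oplus\nu}$.

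To verify the reproducing formula $\lambda(f) = \langle f(X)\gamma,\gamma\rangle$, I expand $f = \sum_w M_w w$ and reduce to the identity $\langle w(X)\xi_i,\xi_j\rangle_\cX = \lambda(E_{ji}\,w)$ for every word with $|w|\leq 2k+1$ and every $i,j$. A short induction on $|w|$ gives $w(X)\xi_i = w \otimes e_i$ inside $\cX$ whenever $|w|\leq k$. For $|w|\leq 2k+1$, split $w = w_1 w_2$ with $|w_1|\leq k+1$ and $|w_2|\leq k$; symmetry of the $X_j$ yields $\langle w(X)\xi_i,\xi_j\rangle_\cX = \langle w_2(X)\xi_i, w_1^*(X)\xi_j\rangle_\cX$, and since $w_2(X)\xi_i = w_2 \otimes e_i$ already sits in $\cX$, the projection built into $w_1^*(X)\xi_j$ is transparent, so the pairing collapses to $\lambda(w_1 w_2\,E_{ji}) = \lambda(E_{ji}\,w)$.

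The step I expect to be most delicate is showing $X \in \overline{\calK}$ under the hypothesis $\lambda \geq 0$ on $M^\nu_x(\mL)_k$. My plan is to manufacture hermitian operators $Y_1,\dots,Y_h$, possibly on a dilation $\widetilde{\cX} \supseteq \cX$, with $\mL(X,Y) \succeq 0$; this would place $X$ in $\calK$ after compression, hence in $\overline{\calK}$. The positivity of $\lambda$ on the constrained elements $\sum_\ell q_\ell^* \mL q_\ell$ is the fuel: a second GNS-style step for the $y$-variables uses the $\Gamma$-annihilation constraint $\sum_\ell q_\ell^* \Gamma_k q_\ell = 0$ (for every $k$ indexing the $y$-variables) to force the $y$-parts of the associated quadratic form to vanish on the relevant subspace, giving $\mL(X,Y)\succeq 0$ there. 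If that direct construction becomes unwieldy, my fallback is a contrapositive argument: if $X \notin \overline{\calK}$, invoke an Effros--Winkler-style separating pencil (Theorem \ref{prop:sharp}) for $\overline{\calK}$; pulled back through the reproducing formula it produces an element of $M^\nu_x(\mL)_k$ on which $\lambda$ is forced negative, a contradiction.

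For the converse, nonnegativity on $\Sigma^\nu_{k+1}$ is immediate: $\lambda(h^* h) = \|h(X)\gamma\|^2 \geq 0$ for $h \in \C^{\nu\times\nu}\ax_{k+1}$. For nonnegativity on $M^\nu_x(\mL)_k$, assuming $X \in \overline{\calK}$, approximate $X$ by tuples $X_n \in \calK$ with companion $Y_n$ satisfying $\mL(X_n,Y_n)\succeq 0$; the $\Gamma$-annihilation constraint makes $\sum_\ell q_\ell(X_n)^* \mL(X_n,Y_n) q_\ell(X_n)$ independent of $Y_n$, so it agrees with $\sum_\ell q_\ell(X_n)^* \mL(X_n,0) q_\ell(X_n) \succeq 0$; evaluating on $\gamma$ and letting $n\to\infty$ gives $\lambda\bigl(\sum_\ell q_\ell^* \mL q_\ell\bigr) \geq 0$.
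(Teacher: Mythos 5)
Your GNS construction and the verification of the reproducing formula $\lambda(f)=\langle f(X)\gamma,\gamma\rangle$ are correct and essentially the same as the paper's (which delegates to \cite[Proposition~2.5]{HKM12}). The converse is likewise fine; your observation that the $\Gamma$-annihilation constraint $\sum_\ell q_\ell^*\Gamma_k q_\ell=0$, being a coefficientwise polynomial identity, kills the $Y$-dependence of $\sum_\ell q_\ell(X_n)^*\mL(X_n,Y_n)q_\ell(X_n)$ for \emph{any} $Y_n$ is exactly what is needed, and the paper only says ``obvious'' here, so you have spelled out something useful.

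Where I would push back is on the ``Further'' part of the forward direction. Your primary plan --- a second GNS step to manufacture companions $Y_1,\dots,Y_h$ on a dilation so that $\mL(X,Y)\succeq 0$ --- is unlikely to close: $\lambda$ is only defined on $x$-polynomials, and the constrained elements $\sum_\ell q_\ell^*\mL q_\ell$ with $\sum_\ell q_\ell^*\Gamma_k q_\ell=0$ carry \emph{no} quadratic data in the $y$-variables from which a GNS-type inner product on a $y$-extended space could be built; there is nothing to quotient or complete. Your fallback is in fact the paper's argument, and it is the right one, but as stated it has a genuine gap: you say the separating pencil ``pulled back through the reproducing formula'' yields an element of $M_x^\nu(\mL)_k$ on which $\lambda$ is negative, but you do not supply the mechanism that converts the separating data into such an element. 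That mechanism is Corollary~\ref{cor:polardualofdrop}: from $\mL_\Lambda|_{\calK}\succeq 0$ one gets $\Lambda\in\calK^\circ$, and then that corollary (the dual description of a free spectrahedrop) provides matrices $W_\ell$ with $\sum_\ell W_\ell^*\Omega W_\ell=\Lambda$ and $\sum_\ell W_\ell^*\Gamma W_\ell=0$. Only after substituting this Choi-type representation into $u^*\mL_\Lambda(X)u<0$, with the cyclic vector $\gamma$ realizing each component $v_i$ of $u$ as $p_i(X)\gamma$, does one obtain $\lambda(q)<0$ for a legitimate $q=\sum_\ell\vec p_\ell^*\mL\vec p_\ell\in M_x^\nu(\mL)_k$ (the $\Gamma$-annihilation of the $W_\ell$ is what certifies membership). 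Without this step the ``pull back'' has no content, so you should make it explicit; it is also where the size bound $\sigma\le\nu\,\sigma_\#(k)$ of the separating pencil from Theorem~\ref{prop:sharp} matters, since the vector $u$ must decompose over a basis of $\C^\sigma$ against cyclic vectors in $\cX$.
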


\begin{proof}
The first part of the forward direction is standard, see e.g.~\cite[Proposition 2.5]{HKM12}.
In the course of the proof one constructs $X_j$ as the operators of multiplication by $x_j$ on a Hilbert space $\cX,$  that, as a set, is 
$\C\ax_k^{1\times \nu}$ (the set of row vectors of length $\nu$ whose entries are polynomials of degree at most $k$).
 The vector space $\cX^{\oplus \mt}$ in which $\gamma$ lies is $\C\ax_k^{\nu\times\nu}$ and
 $\gamma$ can be thought of as the identity matrix in 
 $\C\ax_k^{\nu\times \nu}$.  Indeed, the (column) vector $\gamma$ has $j$-th entry the row vector with $j$-th entry the empty set (which plays the role of
 multiplicative identity) and zeros elsewhere. 

In particular, for $p\in\cX=\C\ax_k^{1\times \nu}$, we have $p=p(X)\gamma$.
Let $\sigma$ denote the dimension of $\cX$ (which turns out to be $\nu$ times the dimension of $\C\ax_k$).
 
We next assume  that $\la$ is nonnegative on $M_x^{\mt}(\mL)_{k}$ and claim
that then $X\in \ol\calK$. 
Assume otherwise. Then, as $\ol\calK$ is closed matrix convex (and $\calK$ contains $0$ since $\mL$ is monic), the matricial Hahn-Banach 
Theorem \ref{prop:sharp} applies: there is a monic linear pencil $\mL_\La$ of size $\sigma$ 
such that ${\mL_\La}|_{\calK} \succeq0$ and $\mL_\La(X)\not\succeq0$.
In particular, $\cD_{\mL_\La} \supseteq \calK$,
whence
\[
\cD_{\mL_\La}^\circ \subseteq \calK^\circ.
\]
By Corollary \ref{cor:polardualofdrop}, 
\begin{equation}
 \label{eq:LaDual}
  \calK^\circ(n) = 
   \Big\{ A \in \bbS_n^g :
 \exists\mu\in\N\ \exists \text{isometry } V: \, \sum_{j=1}^\mu V_j^* \Gamma V_j =0, \,
 \sum_{j=1}^\mu V_j^* \Omega V_j = A \Big\}.
 \end{equation}
 Since 
 $\La\in  \calK^\circ $, there is an isometry $W$ with
 \[
 \sum_{j=1}^\eta W_j^* \Gamma W_j =0, \quad
 \sum_{j=1}^\eta W_j^* \Omega W_j = \La.
 \]
 Here, $W={\rm col}(W_1,\ldots, W_\eta)$ for some $\eta$, and $W_j\in\C^{d\times\sigma}$.

Since $\mL_\La(X)\not\succeq0$, there is $u\in\C^\sigma\otimes\cX$ with
\beq\label{eq:uNot}
u^*L_{\La}(X)u<0.
\eeq
Let
\[
u=\sum_i e_i \otimes v_i,
\]
where $e_i\in\C^\sigma$ are the standard basis vectors, and $v_i\in\cX$. 
By the construction of $X$ and $\gamma$, there is a polynomial $p_i\in\C\ax_k^{1\times \nu}$ with $v_i=p_i(X)\ga$.
Now \eqref{eq:uNot} can be written as follows:
\beq\label{eq:long}
\begin{split}
0 & >  u^* \mL_{\La}(X)u = \big( \sum_i e_i \otimes v_i\big)^* 
\mL_\La(X)
 \big( \sum_j e_j \otimes v_j\big) \\ 
& = 
\sum_{i,j,\ell} \big( e_i \otimes v_i\big)^* (W_\ell \otimes I)^* 
\mL(X,Y) \big(W_\ell \otimes I) \big( e_j \otimes v_j\big)\\
& = 
\sum_{i,j,\ell} \big( W_\ell e_i \otimes p_i(X)\ga \big)^* 
\mL(X,Y) \big(W_\ell e_j \otimes  p_j(X)\ga \big).
\end{split}
\eeq
Letting $\vec p_\ell(x) = \sum_j W_\ell e_j\otimes   p_j(x)\in\C^{d\times \nu}\ax_k$,
\eqref{eq:long} is further equivalent to
\beq\label{eq:concluding}
0> \sum_\ell\big( \vec p_\ell(X) \ga)^* \mL(X,Y)  \big( \vec p_\ell(X) \ga) = \la(q),
\eeq
where $q= \sum_\ell\vec p_\ell(x)^* \mL(x,y)  \vec p_\ell(x)$ is a matrix polynomial only in $x$ by \eqref{eq:LaDual}, and thus $q\in M_x^\mu(\mL)_k$. But now \eqref{eq:concluding} 
contradicts the nonnegativity of $\la$ on $M_x^\mu(\mL)_k$.

The  converse is obvious. 
\end{proof}

\begin{proof}[Proof of Theorem {\rm\ref{thm:posDrop}}] 
   Let ${\rm Sym}\,\C^{\mu\times\mu}\ax_{2r+1}$ denote the symmetric elements of $\C^{\mu\times\mu}\ax_{2r+1}$.  Arguing by contradiction, suppose  $p\in{\rm Sym}\, \C^{\mu\times\mu}\ax_{2r+1}$ and  $p|_{\calK}\succeq0,$ but $p\not\in M_x^\mu(\mL)_r$.
By the scalar Hahn-Banach theorem and Proposition \ref{prop:closed}, 
there is a strictly separating positive (real) linear functional $\la:{\rm Sym}\,\C^{\mu\times\mu}\ax_{2r+1}\to\R$  
nonnegative on $M_x^\mu(\mL)_r$. 
We first extend $\la$ to a (complex) linear functional on 
the whole  $\C^{\mu\times\mu}\ax_{2r+1}$ by sending $q+is\mapsto \la(q)+i \la(s)$ for symmetric $q,s$. 
 We then extend $\la$ to a linear functional
(still called $\la$)
on $\C^{\mu\times\mu}\ax_{2r+2}$ by mapping 
\[
E_{ij}\otimes u^*v \mapsto
\begin {cases} 0 & \text{ if } i\neq j \text{ or } u\neq v \\
C & \text{otherwise,}
\end{cases}
\]
where $
i,j=1,\ldots,\mu$,  and $u,v\in\ax$ are of length $r+1$.
For $C>0$ large enough, this $\la$ will be nonnegative on 
$\Sigma^\mu_{r+1}$.
Perturbing $\la$ if necessary, we may further assume $\la$ is strictly positive
on $\Sigma^\mu_{r}\setminus\{0\}$. 
Now applying Proposition \ref{prop:gns} yields a matrix tuple $X\in \ol\calK$ and a vector $\ga$
satisfying \eqref{eq:LorX} (with $k=r$). But then
\[
0>\la(p) = \langle p(X)\ga,\ga\rangle \geq0,
\]
a contradiction.
\end{proof}

\def\cpd{^{\circ c} }

\def\cp{{cp }}
\def\cpdot{{cp.}}

\section{Tracial Sets}\label{sec:q1}
While this papers original motivation arose from considerations of
free optimization as it appears in 
linear systems theory,
determining  the matrix convex hull of a free set has an analog in 
quantum information theory, see \cite{LP11}. In free optimization,
the relevant maps are completely positive and {\it unital} (ucp). In
quantum information theory, the relevant maps are completely positive 
and {\it trace preserving} (CPTP) or {\it trace non-increasing}. 
This section begins by recalling the two {\it quantum interpolation problems}
 from Subsection \ref{subsec:interp} 
 before 
  reformulating these problem in terms of {\it tracial hulls}.
Corresponding duality results are the topic of the next section.

Recall a {quantum channel}  is a \cp map $\Phi$ from $M_n$  to $M_k$ that  is trace preserving,  $$\tr( \Phi(X) ) = \tr (X).$$
The dual $\Phi'$ of $\Phi$ is the mapping from $M_k$ to $M_n$  defined by
$$ 
\tr( \Phi(X)  Y^*) = \tr(X  \Phi'(Y)^*).
$$

\begin{lemma}[\protect{\cite[Proposition 1.2]{LP11}}]
 $\Phi'$ is a quantum channel \cp if and only if $\Phi$ is unital \cpdot
\end{lemma}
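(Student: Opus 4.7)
The plan is to factor the biconditional into two independent biconditionals: (i) $\Phi$ is completely positive if and only if $\Phi'$ is completely positive, and (ii) $\Phi$ is unital if and only if $\Phi'$ is trace preserving. Since a quantum channel is by definition CPTP, the conjunction of (i) and (ii) yields the claim.

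For (ii), I will compute directly from the defining identity of the dual. If $\Phi(I_n)=I_k$, then for every $Y\in M_k$,
\[
 \tr(\Phi'(Y))^* = \tr(I_n\cdot \Phi'(Y)^*) = \tr(\Phi(I_n) Y^*) = \tr(Y^*) = \tr(Y)^*,
\]
so $\tr(\Phi'(Y))=\tr(Y)$, i.e., $\Phi'$ is trace preserving. Conversely, if $\Phi'$ is trace preserving, then for every $Y$,
\[
 \tr\big((\Phi(I_n)-I_k)Y^*\big) = \tr(\Phi(I_n)Y^*) - \tr(Y^*) = \tr(\Phi'(Y))^* - \tr(Y)^* = 0,
\]
and nondegeneracy of the Hilbert--Schmidt inner product on $M_k$ forces $\Phi(I_n)=I_k$.

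For (i), I will use the Kraus representation that already appears in Theorem~\ref{thm:cpstuff}\,(v). Assuming $\Phi$ is cp, write $\Phi(X)=\sum_j V_j^* X V_j$ for some $n\times k$ matrices $V_j$. Then for all $X\in M_n$ and $Y\in M_k$,
\[
 \tr(\Phi(X) Y^*) = \sum_j \tr(V_j^* X V_j Y^*) = \sum_j \tr\big(X\, V_j Y^* V_j^*\big) = \tr\Big(X\,\big(\sum_j V_j Y V_j^*\big)^*\Big),
\]
and uniqueness of the adjoint forces $\Phi'(Y)=\sum_j V_j Y V_j^*$, which is again in Kraus form and therefore cp by Theorem~\ref{thm:cpstuff}. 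The reverse implication follows from symmetry of the construction, since $\Phi''=\Phi$.

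I do not anticipate a serious obstacle; the argument is essentially bookkeeping with the Hilbert--Schmidt pairing. The one place where care is needed is keeping track of complex conjugates in (ii), since the pairing $\langle A,B\rangle = \tr(AB^*)$ is sesquilinear rather than bilinear; writing out both sides in terms of $\tr(\,\cdot\,)^*$ as above avoids any sign or conjugation errors.
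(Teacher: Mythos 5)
Your proof is correct. The paper gives no argument of its own here---it cites \cite[Proposition~1.2]{LP11} as an external reference---so there is nothing internal to compare your approach against, but your argument is a valid, self-contained proof. The decomposition into the two unconditional biconditionals (i) ``$\Phi$ cp $\iff$ $\Phi'$ cp'' and (ii) ``$\Phi$ unital $\iff$ $\Phi'$ trace preserving'' is logically sound, and combining them recovers the stated equivalence between CPTP and UCP. The Kraus computation for (i) correctly identifies $\Phi'(Y)=\sum_j V_jYV_j^*$ (with Kraus operators $V_j^*$), and the Hilbert--Schmidt computation for (ii) handles the complex conjugation cleanly. The one place you lean on an unstated fact is the involutivity $\Phi''=\Phi$, used to run (i) in reverse; this does follow immediately from the observation that the defining identity $\tr(\Phi(X)Y^*)=\tr(X\Phi'(Y)^*)$ makes $\Phi'$ precisely the Hilbert--Schmidt adjoint of $\Phi$, and adjoints are involutive, but it would be worth saying that in one line rather than calling it ``symmetry of the construction.''
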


Recall the \cp interpolation problem from Subsection \ref{sec:interpAlgor}.
It asks, given $A \in \smatng$ and given $B$ in $\mbS_m^g$:
does there exist a unital \cp map $\Phi:  M_n \to M_m $ such that  $B_j= \Phi(A_j)$ for $j=1, \ldots, g?$ 
 The set of solutions $B$ for 
 a given $A$ is the matrix convex hull of $A$. 
The versions of the interpolation problem  arising in quantum information theory \cite{Hardy, Klesse, NCSB}
 replace  unital with trace preserving or  trace non-increasing.
 Namely, 
does $B_j= \Phi(A_j)$ for $j=1, \ldots, g$ for some 
trace preserving (resp. trace non-increasing) \cp map $\Phi:  M_n \to M_m $?
 The set of all solutions $B$ for a given $A$ is the
 \df{tracial hull} of $A$. Thus, 
 \beq
   \label{eq:thull}
 \thull(A)= \{B:  \Phi(A)=B \ \ \mbox{for some trace preserving \cp  map } \Phi \}. 
 \eeq
 We define the  \df{contractive tracial hull} of a tuple $A$ by 
\[
 \cthull(A)= \{B:  \Phi(A)=B \ \ \mbox{for some \cp trace non-increasing map } \Phi \}.
\]

 The article \cite{LP11} determines when $B \in \thull(A)$ for $g=1$ (see Section 3.2). 
 For any $g \geq 0$  the paper
 \cite[Section 3]{AGprept} converts this problem to an LMI
 suitable for semidefinite programming;
 see Theorem \ref{thm:interp} here for a similar result.

 While the unital and trace preserving (or trace non-increasing) interpolation
  problems have very similar formulations,  tracial hulls possess far
  less structure than matrix convex hulls. Indeed,  as is easily seen,
  tracial hulls need not be convex (levelwise) and contractive tracial hulls
  need not be closed with respect to direct sums.  
Tracial hulls are studied in Subsection  \ref{sec:ts},
 and contractive tracial hulls  in Subsection \ref{sec:cts}.
 Section \ref {sec:tHBan}  contains
 ``tracial" notions of half-space and corresponding
 Hahn-Banach type separation theorems.

\subsection{Tracial Sets and Hulls}
\label{sec:ts}
 A  set $\cY\subseteq\mbS^g$ is \df{tracial} if
 $Y\in \cY(n)$ and if $C_\ell$ are $m\times n$ matrices such that
\[ 
 \sum C_\ell^\ast C_\ell = I_n,
\] 
 then $\sum C_j Y C_j^\ast \in \cY(m)$.  The \df{tracial hull} of a subset $\cS\subseteq\mathbb S^g$ is
  the smallest tracial set containing $\cS,$ 
  denoted
 $\thull(\cS)$.  Note that, in the case
  that $\cS$ is a singleton, this definition is consistent with the definition
  afforded by equation \eqref{eq:thull}.

 The following lemma is an easy consequence of a theorem of Choi, stated in \cite[Proposition 4.7]{Pau02}. 
It caps the number of terms needed in a convex combination to represent
a given matrix tuple $Z$ in the tracial hull of $T.$  Hence it is an analog of 
Caratheodory's convex hull theorem (see e.g.~\cite[Theorem I.2.3]{Bar02}).

\begin{lemma}
 \label{lem:boundsum}
   Suppose $T\in\smatng$ and $C_1,\dots,C_N$ are $m\times n$ matrices making $\sum C_\ell^\ast C_\ell =I_n.$
   If $Z=\sum_{\ell=1}^N C_\ell T C_\ell^\ast$, then there exists $m\times n$ matrices $V_1,\dots,V_{mn}$ such that
   $\sum V_\ell^* V_\ell = I_n$ and 
 \[
   Z =\sum_{\ell=1}^{mn} V_\ell T V_\ell^*. 
 \]
\end{lemma}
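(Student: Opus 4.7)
The plan is to recognize that the hypotheses package $C_1,\ldots,C_N$ as the Kraus operators of a single completely positive map, and then appeal to Choi's theorem to bound the number of Kraus operators needed.

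First I would define $\Psi\colon M_n\to M_m$ by $\Psi(X)=\sum_{\ell=1}^N C_\ell X C_\ell^\ast$. This map is manifestly completely positive. Moreover, the hypothesis $\sum C_\ell^\ast C_\ell = I_n$ is equivalent to $\Psi$ being trace preserving, since for any $X\in M_n$,
\[
\tr(\Psi(X))=\sum_\ell \tr(C_\ell X C_\ell^\ast)=\sum_\ell \tr(X C_\ell^\ast C_\ell)=\tr\!\Big(X\sum_\ell C_\ell^\ast C_\ell\Big)=\tr(X).
\]

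Next, by Choi's theorem (Paulsen, Proposition 4.7), any CP map $M_n\to M_m$ admits a Kraus decomposition with at most $mn$ operators, the bound being the rank of the (at most $mn\times mn$) Choi matrix. Padding with zero operators if necessary, there exist $m\times n$ matrices $V_1,\ldots,V_{mn}$ with
\[
\Psi(X)=\sum_{\ell=1}^{mn} V_\ell X V_\ell^\ast\qquad\text{for all } X\in M_n.
\]
Since trace preservation is a property of the map $\Psi$ and not of a specific Kraus representation, the same calculation as above (applied to the new representation) yields $\sum_{\ell=1}^{mn} V_\ell^\ast V_\ell = I_n$.

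Finally, applying the identity $\Psi(X)=\sum V_\ell X V_\ell^\ast$ coordinatewise to the tuple $T=(T_1,\ldots,T_g)$ gives, for each $j$,
\[
Z_j=\sum_{\ell=1}^N C_\ell T_j C_\ell^\ast = \Psi(T_j)=\sum_{\ell=1}^{mn} V_\ell T_j V_\ell^\ast,
\]
which is the desired conclusion. The only nontrivial ingredient is the rank bound in Choi's theorem; the rest is bookkeeping that translates the $g$-tuple formulation into a statement about a single CP map and back.
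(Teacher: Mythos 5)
Your proof is correct and follows essentially the same route as the paper's: both pass from the Kraus operators $C_\ell$ to the completely positive map $\Phi(X)=\sum C_\ell X C_\ell^*$, invoke \cite[Proposition 4.7]{Pau02} to get a Kraus decomposition with at most $mn$ terms, and then recover $\sum V_\ell^*V_\ell=I_n$ from trace preservation being a property of the map rather than of the representation (the paper writes this as a single chain of trace identities, you phrase it as representation-independence, but the calculation is identical). Your added remark about applying the map coordinatewise to the $g$-tuple $T$ is a harmless piece of extra bookkeeping that the paper leaves implicit.
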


\begin{proof}
 The mapping $\Phi:M_n\to M_m$ defined by
\[
 \Phi(X) = \sum C_\ell X C_\ell^* 
\]
 is completely positive. Hence, by \cite[Proposition 4.7]{Pau02}, there exist (at most) $nm$  matrices $V_j:\mathbb C^m \to \mathbb C^n$ such that
\[
 \Phi(X) = \sum_{\ell=1}^{mn} V_\ell X V_\ell^*.
\]
 In particular,
\[
  Z=\Phi(T) = \sum V_\ell T V_\ell^*.
\]
 Further, for all $m\times m$ matrices $X$, 
\[
 \begin{split}
 \tr(X)  &  =  \tr\big(X\sum C_\ell^* C_\ell\big) 
   =  \tr\big(\sum C_\ell X C_\ell^*\big) 
   =   \tr\big(\Phi(X)\big)\\
 &  =  \tr\big(\sum V_\ell X V_\ell^*\big)
   =  \tr\big(X \sum V_\ell^* V_\ell\big).
 \end{split}
\]
 It follows that $\sum V_\ell^* V_\ell =I.$ 
\end{proof}

\begin{lemma}
 \label{lem:hullofS}
    For $\cS=\{T\}$ a singleton, 
\[
  \thull(\{T\}) = \big\{ \sum C_\ell T C_\ell^\ast: \sum C_\ell^\ast C_\ell =I\big\}.
\]
  Moreover, this set is closed $($levelwise$)$.

    The tracial hull of a subset $\cS\subseteq\mathbb S^g$ is 
\[
  \thull(\cS)  = \Big\{
\sum C_\ell T C_\ell^* :   \sum C_\ell^\ast C_\ell =I, \, T\in \cS\Big\}  = \bigcup_{T\in \cS} \thull(\{T\})
  .
\]   
 If $\cS$ is a finite set, then the  tracial hull of $\cS$ is closed. 
\end{lemma}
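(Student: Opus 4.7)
The plan is to verify the singleton identity first, then bootstrap via Lemma \ref{lem:boundsum} to get levelwise closedness, and finally reduce the general case to a union of singleton hulls.

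First, for $\cS=\{T\}$, call the right-hand side $\hh(T)$. The inclusion $\hh(T)\subseteq\thull(\{T\})$ is automatic from the definition of tracial set (taking the $C_\ell$ as the witnessing tuple, noting $T\in\hh(T)$ via $C_1=I$). For the reverse, I would show $\hh(T)$ is itself tracial: if $Z=\sum_\ell C_\ell T C_\ell^*$ with $\sum_\ell C_\ell^*C_\ell=I$ and $\{D_k\}$ satisfy $\sum_k D_k^*D_k=I$, then
\[
\sum_k D_k Z D_k^* = \sum_{k,\ell}(D_kC_\ell)T(D_kC_\ell)^*,
\]
and
\[
\sum_{k,\ell}(D_kC_\ell)^*(D_kC_\ell) = \sum_\ell C_\ell^*\Bigl(\sum_k D_k^*D_k\Bigr)C_\ell = \sum_\ell C_\ell^*C_\ell = I.
\]
Thus $\hh(T)$ is tracial and contains $T$, so it contains $\thull(\{T\})$.

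Next, to show $\hh(T)$ is levelwise closed, I would apply Lemma \ref{lem:boundsum}, which lets me uniformly truncate the number of summands: any $Z\in\hh(T)(m)$ can be written as $\sum_{\ell=1}^{mn}V_\ell T V_\ell^*$ with $\sum V_\ell^*V_\ell=I_n$. Given a convergent sequence $Z_j\to Z$ in $\hh(T)(m)$, write $Z_j=\sum_{\ell=1}^{mn}V_\ell^{(j)}T(V_\ell^{(j)})^*$ with $\sum_\ell(V_\ell^{(j)})^*V_\ell^{(j)}=I_n$. The constraint forces $\|V_\ell^{(j)}\|\le 1$ for every $\ell,j$, so by compactness of the closed unit ball in $\C^{m\times n}$, I can pass to a subsequence along which every $V_\ell^{(j)}$ converges, say to $V_\ell$. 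The two defining equalities then pass to the limit, exhibiting $Z$ as an element of $\hh(T)(m)$.

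For the general case, call the proposed set $\hhs(\cS)=\{\sum_\ell C_\ell TC_\ell^*:\sum_\ell C_\ell^*C_\ell=I,\,T\in\cS\}$. Clearly $\hhs(\cS)=\bigcup_{T\in\cS}\hh(T)$ and $\cS\subseteq\hhs(\cS)$. The same calculation as above (with the same $T$) shows $\hhs(\cS)$ is tracial, so $\thull(\cS)\subseteq\hhs(\cS)$. Conversely, for each $T\in\cS$, $\thull(\{T\})\subseteq\thull(\cS)$, so taking the union gives $\hhs(\cS)\subseteq\thull(\cS)$.

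Finally, if $\cS$ is finite then $\thull(\cS)=\bigcup_{T\in\cS}\thull(\{T\})$ is a finite union of levelwise closed sets, hence levelwise closed. The main obstacle in the argument is the closedness claim for $\hh(T)$; without the Caratheodory-type bound supplied by Lemma \ref{lem:boundsum}, the number of summands in $Z_j$ could grow unboundedly with $j$ and one could not extract a limit directly. Once that uniform bound is in hand, the remaining steps are routine.
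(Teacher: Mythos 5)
Your proof is correct and follows the same strategy as the paper: establish the singleton identity by showing the right-hand side is tracial (via composing the witnessing tuples), invoke Lemma~\ref{lem:boundsum} to uniformly bound the number of summands and extract a convergent subsequence for closedness, verify that the union $\bigcup_{T\in\cS}\thull(\{T\})$ is itself tracial to get the general formula, and conclude closedness for finite $\cS$ as a finite union of closed sets. The only difference is that you spell out the composition calculation $\sum_{k,\ell}(D_kC_\ell)T(D_kC_\ell)^*$ explicitly where the paper simply asserts the observation; both are sound.
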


\begin{proof}
  The first statement follows from the observation that  $\{\sum C_\ell T C_\ell^\ast: \sum C_\ell^\ast C_\ell=I\}$ is tracial.
  
  To prove the moreover, suppose $T$ has size $n$ and suppose $Z^k$ is a sequence from $\mathcal Y(m)$.  By Lemma \ref{lem:boundsum}
  for each $k$ there exists $nm$ matrices $V_{k,\ell}$ of size $n\times m$ such that
\[
 Z^k = \sum_{\ell} V_{k,\ell} T V_{k,\ell}^*
\]
  and each $V_{k,\ell}$ is a contraction. Hence, by passing to a subsequence, we can assume, that for each fixed $\ell$, the
  sequence $(V_{k,\ell})_k$ converges to some $W_\ell$. Hence
  $Z^k$ converges to $Z=\sum_\ell W_\ell TW_\ell^*$. Also, 
  since $\sum_\ell V_{k,\ell}^* V_{k,\ell}=I$ for each $k$, 
we have  $\sum_\ell W_\ell^* W_\ell=I$, whence $Z\in\mathcal Y(m)$.

  To prove the second statement,  let $\cS\subset \smatg$ be given. 
  Evidently, $$\cS\subset \bigcup_{T\in \cS} \thull(\{T\})\subset \thull(\cS).$$
  Hence it suffices to show that $\bigcup_{T\in\cS} \thull(\{T\})$ is itself tracially convex. 
  To this end, suppose  $X\in \bigcup_{T\in \cS} \thull(\{T\})$ and
  $C_1,\dots,C_N$ with $\sum C_\ell^\ast C_\ell =I$ are given (and of the appropriate sizes).
  There is a $S\in \cS$ such that $X\in\thull(\{S\})$.  Hence, 
  by the first part of the lemma, $\sum C_\ell X C_\ell^\ast \in \thull(\{S\}) \subset\bigcup_{T\in\cS}\thull(\{T\})$
  and the desired conclusion follows.

  The final statement of the lemma follows by combining its first two assertions and using the fact that
  the closure of a finite union is the finite union of the closures. 
\end{proof}

\subsection{Contractively Tracial Sets and Hulls}
\label{sec:cts}
 A  set $\cY\subseteq \mbS^g$ is \df{contractively tracial} if
 $Y\in \cY(m)$ and if $C_\ell$ are $n\times m$ matrices such that
\begin{equation}
 \label{eq:sumCjleq}
 \sum C_\ell^\ast C_\ell \preceq I_m, \quad
\end{equation}
 then $\sum C_j Y C_j^\ast \in \cY(n)$.  
 Note that, in this case,  $\cY$ is closed under unitary conjugation and compression to subspaces, but not necessarily 
 direct sums.
 It is clear that intersections of contractively tracial sets are again contractively tracial.

In the case $\cS$ is a singleton,  the \df{contractive tracial hull} of a set $\cS,$  \index{cthull} 
 defined  as the smallest contractively tracial set containing $\cS$, 
 is consistent with our earlier definition  in terms of \cp maps.

\begin{lemma}\label{lem:cthullofS}
    The contractive tracial hull of a subset $\cS\subseteq\mathbb S^g$ is 
\[
  \cthull(\cS)  = \Big\{
\sum C_\ell T C_\ell^* :   \sum C_\ell^\ast C_\ell \preceq I, \, T\in \cS\Big\}  
= \bigcup_{T\in \cS} \cthull(\{T\}) 
  .
\]   
 If $\cS$ is a finite set, then the contractive tracial hull of $\cS$ is closed. 
\end{lemma}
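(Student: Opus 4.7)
The plan is to parallel the proof of Lemma \ref{lem:hullofS} almost verbatim, substituting the relaxed constraint $\sum C_\ell^* C_\ell \preceq I$ for $=I$ throughout, and noting that Choi's bound on the number of Kraus operators in a CP decomposition is insensitive to the trace condition.

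First I would verify that the right-hand side, call it $\mathcal{H}$, is contractively tracial. Given $X = \sum_\ell C_\ell T C_\ell^* \in \mathcal{H}$ and matrices $\{D_k\}$ with $\sum_k D_k^* D_k \preceq I$, the substitution
\[
\textstyle \sum_k D_k X D_k^* = \sum_{k,\ell}(D_k C_\ell) T (D_k C_\ell)^*
\]
together with
\[
\textstyle \sum_{k,\ell}(D_k C_\ell)^*(D_k C_\ell) = \sum_\ell C_\ell^* \bigl(\sum_k D_k^* D_k\bigr) C_\ell \preceq \sum_\ell C_\ell^* C_\ell \preceq I
\]
shows $\sum_k D_k X D_k^* \in \mathcal{H}$. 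Since any contractively tracial set containing $\cS$ must also contain $\mathcal{H}$, minimality yields $\cthull(\cS) = \mathcal{H}$. Restricting the Kraus operators to act on a single $T \in \cS$ identifies this with $\bigcup_{T \in \cS}\cthull(\{T\})$.

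For closedness, I would first record the analog of Lemma \ref{lem:boundsum}: every CP trace-non-increasing map $\Phi : M_m \to M_n$ admits a Choi--Kraus representation $\Phi(Y) = \sum_{\ell=1}^{nm} V_\ell Y V_\ell^*$ with $\sum_\ell V_\ell^* V_\ell \preceq I_m$. Choi's theorem \cite[Proposition 4.7]{Pau02} caps the number of Kraus operators at $nm$ for any CP map; because the quantity $\sum_\ell V_\ell^* V_\ell$ equals $\Phi^\ast(I_n)$ independently of the decomposition, the trace-non-increasing condition is automatically transferred to the new representation. With this bound in hand, fix $T \in \cS$ of size $m$ and let $Z^k \in \cthull(\{T\})(n)$ converge. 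Writing $Z^k = \sum_{\ell=1}^{nm} V_{k,\ell} T V_{k,\ell}^*$ with each $V_{k,\ell}$ a contraction, I pass to a subsequence so that $V_{k,\ell} \to W_\ell$ for every $\ell$. The limits satisfy $\sum_\ell W_\ell^* W_\ell \preceq I_m$ and $\lim_k Z^k = \sum_\ell W_\ell T W_\ell^*$, proving $\cthull(\{T\})(n)$ is closed. For finite $\cS$, a finite union of closed sets is closed, completing the argument.

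There are no serious obstacles beyond verifying that Choi's bound is compatible with the trace-non-increasing hypothesis. This compatibility is automatic since $\sum_\ell V_\ell^* V_\ell = \Phi^\ast(I_n)$ is an invariant of the underlying CP map rather than of any particular Kraus decomposition, so any Choi-optimal representation inherits the constraint $\sum_\ell V_\ell^* V_\ell \preceq I_m$. With that observation the whole proof is a mechanical translation of Lemma \ref{lem:hullofS}.
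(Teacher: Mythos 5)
Your proof is correct and follows exactly the route the paper intends: the paper's own proof of this lemma is the single sentence ``Proof is the same as for Lemma \ref{lem:hullofS}, so is omitted,'' and your argument carries out that translation, replacing $\sum C_\ell^*C_\ell = I$ by $\preceq I$ and observing that the Choi bound on the number of Kraus operators and the decomposition-invariance of $\sum_\ell V_\ell^* V_\ell = \Phi^*(I)$ make the compactness/subsequence argument go through unchanged.
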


\begin{proof}
Proof is the same as for Lemma \ref{lem:hullofS}, so is omitted.
\end{proof}

Tracial and contractively tracial sets are not necessarily convex, as Example 
\ref{ex:nocon} illustrates, and they are not necessarily free sets because they 
may not respect direct sums. Lemma \ref{lem:sums-convex} below explains the  relation between these two failings.
 Recall, a subset $\cY$ of $\smatg$ is levelwise convex if each $\cY(n)$ is convex (in the usual sense as a subset of $\smatng$).
 Say that $\cY$ is \df{closed with respect to convex direct sums} 
 if given $\ell$ and $Y^1,\dots,Y^\ell\in\cY$
 and given $\lambda_1,\dots,\lambda_\ell\ge0$ with $\sum \lambda_j \le 1$, 
\[
  \oplus_j \lambda_j Y^j \in \cY.
\]

\begin{lemma}
 \label{lem:sums-convex}
  If $\cY$ is contractively tracial, then 
   $\cY$ is levelwise convex if and only if $\cY$
   is  closed with respect to convex direct sums.
\end{lemma}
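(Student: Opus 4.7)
I would prove both implications by playing off the two complementary uses of the contractively tracial hypothesis: a single contraction $C$ with $C^*C\preceq I$ (which embeds, compresses, or rescales) and a family $\{C_\ell\}$ with $\sum C_\ell^*C_\ell\preceq I$ (which splits a block-diagonal tuple into a sum at a smaller size). Levelwise convexity will enter only at the moment we have arrived at a genuine convex combination of elements sitting at a common level.

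For the backward direction, given $Y^1,Y^2\in\cY(n)$ and $\lambda\in[0,1]$, closure under convex direct sums first places $\lambda Y^1\oplus(1-\lambda)Y^2$ in $\cY(2n)$. Then the contractively tracial property, applied with the two $n\times 2n$ block-row matrices $C_1=(I_n\ 0)$ and $C_2=(0\ I_n)$, which satisfy $C_1^*C_1+C_2^*C_2=I_{2n}$, collapses the direct sum into $\lambda Y^1+(1-\lambda)Y^2\in\cY(n)$.

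For the forward direction, suppose $Y^j\in\cY(n_j)$ and $\lambda_j\ge 0$ with $s:=\sum\lambda_j\le 1$, and set $n=\sum n_j$. For each $j$ the isometric embedding $V_j\colon\C^{n_j}\to\C^n$ onto the $j$th diagonal block satisfies $V_j^*V_j=I_{n_j}$, so contractively tracial (with a single contraction) produces the zero-padded tuple $\tilde Y^j=0\oplus\cdots\oplus Y^j\oplus\cdots\oplus 0\in\cY(n)$. When $s=1$, the target $\bigoplus_j\lambda_jY^j$ is literally the convex combination $\sum_j\lambda_j\tilde Y^j$, so levelwise convexity finishes the job. When $0<s<1$, I would first run the $s=1$ argument using the normalized weights $\lambda_j/s$ to obtain $\bigoplus_j(\lambda_j/s)Y^j\in\cY(n)$, and then invoke contractively tracial one more time with the single scalar contraction $C=\sqrt{s}\,I_n$ (so $C^*C=sI_n\preceq I_n$) to shrink by the factor $s$, yielding $\bigoplus_j\lambda_jY^j\in\cY(n)$. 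The degenerate case $s=0$ reduces to showing $0\in\cY(n)$, which follows from contractively tracial applied with $C=0$ to any element of $\cY$.

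The main point to watch is the forward direction when $s<1$: levelwise convexity alone only produces exact convex combinations, so the contractively tracial hypothesis must be reinvoked to rescale. Separating the cases $s=1$, $s\in(0,1)$, and $s=0$ keeps the bookkeeping entirely elementary, and no further ideas beyond the two uses of contractively tracial isolated at the outset are needed.
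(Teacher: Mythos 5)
Your proof is correct and follows essentially the same route as the paper: embed into the common level with block-column isometries and use levelwise convexity for the forward direction, and split a convex direct sum with the adjoint block-row contractions for the converse. The only noticeable difference is that you explicitly handle the sub-stochastic case $\sum\lambda_j<1$ by rescaling with $\sqrt{s}\,I$, a detail the paper's proof glosses over by only exhibiting the case $\sum\lambda_j=1$.
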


\begin{proof} 
  Suppose each $\cY(m)$ is convex. Given $Y^j\in \cY(m_j)$ for $1\le j\le \ell$, let 
  $m=\sum m_j$.  Consider, the block operator column 
  $W_j$ embedding  $\mathbb C^{m_j}$
  into $\mathbb C^m = \bigoplus_j \mathbb C^{m_j}$.  Note that $W_j^\ast W_j = I_{m_j}$ and thus contractively tracial implies
 $W_j Y^j W_j^\ast \in \cY(m)$. Hence, given $\lambda_j\ge 0$ with
  $\sum \lambda_j =1$, convexity of  
  $\cY(m)$  (in the ordinary sense), implies
\[
 \bigoplus_j \lambda_j Y^j = \sum \lambda_j W_j Y^j W_j^\ast \in \cY(m).
\]

 To prove the converse, suppose $Y^j \in\cY(n)$ and $m=\ell n$. In this case,
 $\sum W_j W_j^\ast = I_n$ and hence tracial implies,
\[
 \sum W_j^\ast \big(\bigoplus \lambda_j Y^j\big)  W_j 
 =\sum \lambda_j Y^j \in \cY(n).  \qedhere
\] 
\end{proof}

  \subsection{Classical  Duals of Free Convex Hulls and of  Tracial Hulls}
\label{sec:classical duals}

This subsection gives properties of the classical polar dual  
of  matrix convex hulls  and tracial hulls. 
 Real linear functionals $\lambda:\smatng\to \mathbb R$ 
 are in one-one correspondence with elements $B\in\smatng$ via the pairing,
\[
 \lambda(X) = \tr\big(\sum B_j X_j\big), \quad X=(X_1,\ldots,X_g).
\]
 Write $\lambda_B$ for this $\lambda$.  \index{$\lambda_B$}
 To avoid confusion with the free polar duals appearing earlier in this article, 
  let 
$\cU \cpd$ denote the   \df{conventional polar dual} of a subset $\cU \subset\smatng$. Thus, 
$$
 \cU \cpd = \{B \in \smatng :  \lambda_B(X) \leq 1 \text{ for  all } X \in \cU \}.
$$

\begin{lemma}  Suppose $A\in\smatng$.
\begin{enumerate}[\rm (i)]
\item $ \mco(A)\cpd = \big\{Y:  \thull(Y) \subset \{A\}\cpd \big\};$
\item $ \thull(A)\cpd   = \big\{Y :  \{A\}\cpd \supset \mco(Y)    \big\};$ and
\item $\thull(B) \subset  \thull(A)$ if and only if 
$\{A\}\cpd \supset \mco(Y)$ implies 
$\{B\}\cpd \supset \mco(Y).$
\end{enumerate}
\end{lemma}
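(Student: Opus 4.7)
My plan is to reduce all three parts to a single trace--pairing identity and prove (i) and (ii) by direct computation, deducing (iii) from (ii) by Hahn--Banach. The identity that does all the work is
\[
\lambda_B\!\Big(\sum_\ell C_\ell X C_\ell^*\Big) \;=\; \lambda_{\sum_\ell C_\ell^* B C_\ell}(X),
\]
obtained by cycling $\tr(B_j C_\ell X_j C_\ell^*) = \tr(C_\ell^* B_j C_\ell X_j)$ and summing over $\ell$ and $j$. The constraint $\sum_\ell V_\ell^* V_\ell = I$ is the Kraus condition defining ucp maps (so $\sum V_\ell^* A V_\ell \in \mco(A)$), while the transpose constraint $\sum_\ell C_\ell^* C_\ell = I$ is the Kraus condition defining trace preserving cp maps (so $\sum C_\ell A C_\ell^* \in \thull(A)$). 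The identity thus converts a pairing of $B$ against a ucp image of $A$ into a pairing of a trace preserving image of $B$ against $A$, and vice versa.

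For (i), I would fix $B$, write a generic element $X\in\mco(A)$ as $X=\sum V_\ell^* A V_\ell$ with $\sum V_\ell^* V_\ell = I$, and use the identity to rewrite $\lambda_B(X) = \lambda_Z(A)$ where $Z:=\sum V_\ell B V_\ell^*$. As the $V_\ell$'s vary, Lemma~\ref{lem:hullofS} tells me $Z$ sweeps out exactly $\thull(B)$, so $B\in\mco(A)\cpd$ iff $\lambda_Z(A)\le 1$ for every $Z\in\thull(B)$, which is exactly $\thull(B)\subset\{A\}\cpd$. Part (ii) is dual: an arbitrary element of $\thull(A)$ has the form $\sum C_\ell A C_\ell^*$ with $\sum C_\ell^* C_\ell = I$, the identity moves $B$ across the pairing to give $\lambda_{\sum C_\ell^* B C_\ell}(A)$, and the partner tuple $\sum C_\ell^* B C_\ell$ is now a ucp image of $B$ (its Kraus form) sweeping out $\mco(B)$.

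For (iii) the forward implication is immediate from the definition of $\cpd$, while a double use of (ii) recasts the right-hand implication as $\thull(A)\cpd \subset \thull(B)\cpd$. To obtain the converse I would argue by contradiction: given $Z\in\thull(B)\setminus\thull(A)$, apply Hahn--Banach to strictly separate $Z$ from the closed convex set $\thull(A)$. Closedness of each level of $\thull(A)$ is Lemma~\ref{lem:hullofS}, and convexity follows from the rescaling $t\sum C_\ell A C_\ell^* + (1-t)\sum D_\ell A D_\ell^* = \sum (\sqrt{t}C_\ell) A (\sqrt{t}C_\ell)^* + \sum (\sqrt{1-t}D_\ell) A (\sqrt{1-t}D_\ell)^*$, whose two Kraus sums combine to the identity. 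I expect the main obstacle to be the normalization of the separating functional $Y$: one needs $Y\in\thull(A)\cpd$ together with $\lambda_Y(Z)>1$. This is straightforward when the supremum of $\lambda_Y$ over $\thull(A)$ is positive, but requires a small perturbation of $Y$ in the degenerate case of a non-positive supremum. Once achieved, (ii) converts $Y\in\thull(A)\cpd\setminus\thull(B)\cpd$ into a witness with $\mco(Y)\subset\{A\}\cpd$ but $\mco(Y)\not\subset\{B\}\cpd$, contradicting the hypothesis.
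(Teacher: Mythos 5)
Your proofs of parts (i) and (ii) are the paper's proofs: both reduce to the cyclic-trace identity $\lambda_B\big(\sum_\ell C_\ell X C_\ell^*\big)=\lambda_{\sum_\ell C_\ell^* B C_\ell}(X)$ together with the Kraus-form descriptions $\sum V_\ell^* V_\ell = I$ for $\mco$ and, from Lemma~\ref{lem:hullofS}, $\sum C_\ell^* C_\ell = I$ for $\thull$. Calling the test tuple $B$ rather than the paper's $Y$ is cosmetic.

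For (iii) your route differs from the paper's and exposes a real subtlety. The paper chains $\thull(B)\subset\thull(A) \Leftrightarrow \thull(B)\cpd\supset\thull(A)\cpd$, whose converse direction is implicit biduality and requires $\thull(A)=\thull(A)^{\circ c\,\circ c}=\overline{\conv}\big(\thull(A)\cup\{0\}\big)$, i.e.\ $0\in\thull(A)$. You unroll this into a Hahn--Banach separation of $Z\in\thull(B)\setminus\thull(A)$ from $\thull(A)$ and note, correctly, that normalizing the separating functional is problematic when the supremum of $\lambda$ on $\thull(A)$ is nonpositive. That is not a degenerate edge case to be handled by a perturbation: trace preservation forces $\tr Z_j = \tr A_j$ for every $Z\in\thull(A)$, so $0\notin\thull(A)$ as soon as some $\tr A_j\neq 0$, and then $Z$ can sit inside $\overline{\conv}\big(\thull(A)\cup\{0\}\big)\setminus\thull(A)$, where no $Y\in\thull(A)\cpd$ satisfies $\lambda_Y(Z)>1$. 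Concretely take $A=I_n$, $B=0$: then $\thull(B)=\{0\}$ and $\thull(B)\cpd=\smatng$, so the right-hand implication in (iii) is vacuously true, yet $\thull(B)\not\subset\thull(A)$ because traces differ. Thus the obstacle you flagged is genuine and no perturbation closes it; the converse of (iii) needs an added hypothesis (for instance $0\in\thull(A)$), and the same gap is present, unacknowledged, in the paper's one-line biduality argument.
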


\begin{proof}
The first formula:
\[
 \begin{split}   \mco(A)\cpd   & = \big\{Y :  1- \tr(\sum_j V_j^* A V_j \ Y) \geq 0, \ \sum_j V_j^*  V_j =I  \big\} \\
   & = \big\{Y :  1- \tr( A \ \sum_j   V_j  YV_j^*) \geq 0, \ \sum_j V_j^*  V_j =I  \big\} \\
  & = \{Y : 1- \tr( A    G)  \geq 0, \ G \in \thull(Y)   \} \\
    & =    \big\{Y :  \{ A\}\cpd  \supset  \thull(Y)     \big\}. 
 \end{split}
\]
The second formula:
\[
 \begin{split}
   \thull(A)\cpd & = \big\{Y : 1- \tr( \sum_j V_j^* A V_j \ Y) \geq 0, \  \sum_j V_j V_j^* =I \big\} \\
  & = \big\{Y : 1- \tr( A \ \sum_j  V_j \ Y V_j^*   ) \geq 0, \ \sum_j V_j V_j^* =I \big\} \\
  & = \big\{Y :  \{A\}\cpd \supset \mco(Y)    \big\}. 
 \end{split}
\]
The third formula:
 $\thull(B) \subset   \thull(A)$ if and only if 
$\thull(B)\cpd \supset    \thull(A)\cpd$ if and only if
\[
 \big\{Y : \{ B\}\cpd \supset \mco(Y)   \big\}  \supset 
  \big\{Y :  \{A\}\cpd \supset \mco(Y)  \big\},
\]
 if and only if $\{A\}\cpd  \supset \mco(Y)$ and $\{B\}\cpd \supset \mco(Y).$
\end{proof}

  \section{Tracial Spectrahedra and an Effros-Winkler Separation Theorem}
\label{sec:tHBan}

Classically, convex sets are delineated by half-spaces.
In this section a notion of half-space suitable in the tracial context
 --  we call these {\it tracial spectrahedra} -- are introduced. 
Subsection \ref{subsec:tHBan} contains a free Hahn-Banach   
separation theorem for tracial spectrahedra. 
The  section concludes with  
 applications of this Hahn-Banach theorem.
Subsection \ref{sec:tduals} suggests
 several notions of duality based on the tracial separation theorem
 from Subsection \ref{subsec:tHBan}. 
 Subsection \ref{sec:tcs} studies free (convex) cones.

\subsection{Tracial Spectrahedra}
 Polar duality considerations in the trace non-increasing context lead naturally to inequalities of the
 type,
\[
 I\otimes T - \sum_{j=1}^g B_j\otimes Y_j \succeq 0,
\]
 for tuples $B,Y\in \smatg$ and a positive semidefinite matrix $T$ with trace at most one. 
 Two notions, in a sense dual to one another,  of half-space are obtained by fixing either $B$ or $Y$. 
 
 Given $B\in\mbS_k^g,$  let 
\[
\begin{split}
 \fH_B & =\bigcup_{m\in\N} \big\{ Y\in\smatmg: \exists T\succeq 0, \ \tr(T)\le 1, \ \  I\otimes T- \sum B_j\otimes Y_j \succeq 0\big\} \\
 &= \bigcup_{m\in\N}  \big\{ Y\in\smatmg: \exists  T\succeq 0, \ \tr(T)= 1,  \ \  I\otimes T- \sum B_j\otimes Y_j \succeq 0\big\}.
 \end{split}
\]
 We call sets of the form $\fH_B$  \df{tracial spectrahedra}. 
  Tracial spectrahedra obtained by fixing $Y$, and parameterizing  over $B$, appear in Subsubsection \ref{sec:Idem}. 

\begin{proposition}
 \label{prop:easytrace}
   Let $B\in\mbS_k^g$ be given.
 \begin{enumerate}[\rm (a)]
   \item The set $\fH_B$ is contractively tracial;
   \item For each $m$, the set $\fH_B(m)$ is convex; and
  \item For each $m$, the set $\fH_B(m)$ is closed.
\end{enumerate}
      In summary, $\fH_B$ is levelwise compact and closed, and  is contractively tracial. 
\end{proposition}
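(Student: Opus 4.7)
The plan is to verify (a), (b), (c) directly from the definition of $\fH_B$, after which the summary statement is essentially a compilation. In each case I want to build the required witness for the new element of $\fH_B$ from the witness $T$ supplied for the original element.

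For (a), I would start from $Y \in \fH_B(m)$ with witness $T$, i.e., $T \succeq 0$, $\tr T \le 1$, and $I \otimes T \succeq \sum_j B_j \otimes Y_j$, together with $n \times m$ matrices $C_\ell$ satisfying $\sum_\ell C_\ell^* C_\ell \preceq I_m$. Conjugating the pencil inequality by $I \otimes C_\ell^*$ and summing over $\ell$ yields
$$
I \otimes T' \;\succeq\; \sum_j B_j \otimes Z_j,
$$
where I set $T' := \sum_\ell C_\ell T C_\ell^*$ and $Z_j := \sum_\ell C_\ell Y_j C_\ell^*$. By cyclicity of the trace, $\tr T' = \tr\bigl(T \sum_\ell C_\ell^* C_\ell\bigr)$, and the standard estimate $\tr(T^{1/2} A T^{1/2}) \le \tr T$ for $0 \preceq A \preceq I$ and $T \succeq 0$ gives $\tr T' \le \tr T \le 1$. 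So $T'$ is a valid witness for $Z \in \fH_B(n)$.

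For (b), given $Y^0, Y^1 \in \fH_B(m)$ with witnesses $T^0, T^1$ and $\lambda \in [0,1]$, the convex combination $\lambda T^0 + (1-\lambda) T^1$ is positive semidefinite, has trace at most $1$, and satisfies the pencil inequality for $\lambda Y^0 + (1-\lambda) Y^1$, because both the PSD condition and the trace bound are preserved under convex combinations of the defining inequalities. For (c), I would take a sequence $Y^{(\nu)} \in \fH_B(m)$ converging to $Y \in \smatmg$ with witnesses $T^{(\nu)}$. The set $\{T : T \succeq 0,\ \tr T \le 1\}$ is a compact subset of the finite-dimensional space of Hermitian matrices, so after passing to a subsequence $T^{(\nu_k)} \to T$, continuity of the Loewner order and the trace delivers the witness for $Y$. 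The summary statement then collects (a), (b), (c), with compactness of $\fH_B(m)$ following from closedness in (c) together with the fact that the constraint $\sum_j B_j \otimes Y_j \preceq I \otimes T$ with $\|T\| \le \tr T \le 1$ uniformly bounds the image of $Y$ under $\Lambda_B$.

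The only delicate step is (a): one must keep track of the two tensor factors when conjugating by $I \otimes C_\ell^*$, and then verify the trace bound for $T'$ via the cyclicity-plus-contractivity estimate. Items (b) and (c) are routine once (a) has pinned down the right way to manipulate witnesses.
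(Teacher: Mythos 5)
Your arguments for items (a), (b), and (c) are correct and coincide with the paper's proof. In (a) you conjugate the defining inequality by $I\otimes C_\ell$, sum over $\ell$, and take $T' = \sum_\ell C_\ell T C_\ell^*$ as the new witness, controlling $\tr T'$ by cyclicity of the trace and the contraction hypothesis $\sum_\ell C_\ell^*C_\ell \preceq I$ --- exactly the paper's argument. Items (b) and (c) are the same routine convex-combination and compact-witness-space arguments as in the paper.

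The one place where your write-up goes astray is the final sentence, where you try to justify the proposition's summary claim of levelwise compactness. The defining constraint
\[
I\otimes T - \sum_j B_j\otimes Y_j \succeq 0, \qquad T\succeq0,\ \tr T\le 1,
\]
is one-sided: it yields $\sum_j B_j\otimes Y_j \preceq I\otimes T \preceq I$, but it provides no lower bound on $\sum_j B_j\otimes Y_j$, and hence no bound on $Y$. Concretely, take $g=k=1$ and $B_1=-1$; then with $T=0$ every $Y\succeq0$ lies in $\fH_B(m)$, so $\fH_B(m)$ is unbounded. The word ``compact'' in the proposition's summary appears to be a slip for ``convex'': notice that the body of the proposition asserts, and its proof establishes, exactly convexity (b) and closedness (c) at each level together with the contractively tracial property (a), but not compactness. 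You should drop the attempted compactness argument; it cannot succeed in general, and it does not correspond to anything proved in the paper.
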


\begin{rem}\rm
 Of course $\fH_B$ is not a free set since, in particular, it is not closed with respect to direct sums. \qedhere

\end{rem}

\begin{proof}
 Suppose $Y\in \fH_B(m)$ and $C_\ell$ satisfying equation \eqref{eq:sumCjleq} are given. There is an $m\times m$ positive semidefinite matrix $T$ with trace at most one such that
\[
 I\otimes T -\sum B_j \otimes Y_j \succeq 0.
\]
 It follows that
\[
 0\preceq I\otimes \sum_\ell C_\ell T C_\ell^\ast - \sum_j B_j \otimes \sum_\ell C_\ell Y_j C_\ell^\ast.
\]
 Note that $T^\prime  = \sum_\ell C_\ell T C_\ell^\ast \succeq 0$ and 
\[
 \tr(T^\prime) = \tr\big(T \sum C_\ell^\ast C_\ell\big)
  = \tr\big(T^{\frac12} C_\ell^\ast C_\ell T^{\frac12}) \le \tr(T) \le 1.
\]
Hence $\sum C_\ell Y C_\ell^\ast \in\cY(n)$ and item (a) of the proposition is proved.

 To prove item (b), suppose both $Y^1$ and $Y^2$ are in $\mathcal Y_B$. To each there is an
  associated positive semidefinite matrix of trace at most one, say $T_1$ and $T_2$. 
  If $0\le s_1,s_2 \le 1$ and $s_1+s_2=1$, then $T=\sum s_\ell T_\ell$ is positive semidefinite and has trace at most one.
  Moreover, with $Y=\sum s_j Y^j$, 
\[
 I\otimes T -  \sum_j B_j\otimes \big(\sum s_\ell Y^\ell_j\big) = \sum_\ell s_\ell \big( I\otimes T_\ell - \sum_j B_j\otimes Y^\ell_j\big) \succeq 0. \]

 To prove (c), suppose the sequence  $(Y^k)_k$ from $\fH_B(m)$ converges to $Y\in\mbS_m^g$.
 For each $k$ there is a positive semidefinite matrix $T_k$ of trace at
 most one such that 
 \[
 I\otimes T_k - \Lambda_B(Y^k)\succeq0.
 \]
Choose a convergent subsequence of the $T_k$ with limit $T$. This $T$ witnesses $Y\in \fH_B(m)$. 
\end{proof}

To proceed toward the separation theorem we start with some preliminaries.

\subsection{An Auxiliary Result}
 Given a positive integer $n$, let
 $\cTn$ denote the positive semidefinite
 $n\times n$ matrices 
 of trace one.  Each $T \in \cTn$ corresponds
 to a state on $M_n$  via the
 trace,
\begin{equation}\label{eq:trstate}
  M_n \ni A \mapsto \tr(AT).
\end{equation}
Conversely, to each state $\varphi$ on $M_n$ we can assign a matrix $T$
such that $\varphi$ is the map \eqref{eq:trstate}.
  Note that $\cTn$ is a convex, compact subset of $\mathbb S_n,$
  the symmetric $n\times n$ matrices.  

 The following
 lemma is a version of  \cite[Lemma 5.2]{EW97}.
 An affine (real) linear mapping $f:\mathbb S_n \to \mathbb R$ is a
 function of the form $f(x)=a_f +\lambda_f(x)$,
 where $\lambda_f$ is (real) linear and $a_f\in\mathbb R$.

\begin{lemma}
 \label{lem:cone}
  Suppose $\mathcal F$ is a convex set of affine linear
  mappings $f:\mathbb S_n \to \mathbb R$. If for each $f\in \mathcal F$
  there is a $\hh \in\cTn$ such that $f(\hh)\ge 0$,
  then there is a $\hhs\in \cTn$ such that
  $f(\hhs)\ge 0$ for every $f\in\mathcal F$.
\end{lemma}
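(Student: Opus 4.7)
The plan is to recast the problem as a minimax problem on the compact convex set $\cTn$. First, identify each affine linear map $f : \mathbb S_n \to \mathbb R$ with the pair $(a_f, B_f) \in \mathbb R \oplus \mathbb S_n$ determined by $f(X) = a_f + \tr(B_f X)$. Under this identification $\mathcal F$ becomes a convex subset of the finite-dimensional vector space $\mathbb R \oplus \mathbb S_n$, and the evaluation map $\phi(T,f) := f(T)$ is continuous on $\cTn \times \mathcal F$ and affine in each variable separately.

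Next I would apply Sion's minimax theorem (equivalently Kneser's) to $-\phi$, with compact convex side $X = \cTn$ and convex side $Y = \mathcal F$. The affinity and continuity of $-\phi$ in each variable supply the required quasi-concavity/quasi-convexity and semicontinuity, yielding
\begin{equation*}
\min_{T \in \cTn} \sup_{f \in \mathcal F}\bigl(-f(T)\bigr) \;=\; \sup_{f \in \mathcal F} \min_{T \in \cTn}\bigl(-f(T)\bigr).
\end{equation*}
The outer minimum on the left is attained because $T \mapsto \sup_f(-f(T))$ is convex and lower semicontinuous on the compact set $\cTn$, while the inner minimum on the right is attained by continuity of $f$ on $\cTn$.

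Finally, the hypothesis says that for every $f \in \mathcal F$ there exists $T \in \cTn$ with $f(T) \geq 0$, equivalently $\min_{T \in \cTn} (-f(T)) = -\max_T f(T) \leq 0$. Hence the right-hand side of the displayed identity is $\leq 0$, so the left-hand side is too, and any minimizer $\mathfrak{T} \in \cTn$ of the outer minimum satisfies $-f(\mathfrak{T}) \leq 0$, i.e.\ $f(\mathfrak{T}) \geq 0$, for all $f \in \mathcal F$. The main obstacle is just the correct invocation of the minimax theorem; once $\mathcal F$ is embedded in $\mathbb R \oplus \mathbb S_n$ everything is finite-dimensional, and no topological hypothesis on $\mathcal F$ is needed since compactness and continuity are required only on the $\cTn$-side of the minimax.
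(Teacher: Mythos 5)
Your proof is correct, but it takes a genuinely different route from the paper's. The paper proves the result directly: it sets $B_f = \{T \in \cTn : f(T) \ge 0\}$ for each $f \in \mathcal F$, observes these are nonempty compact subsets of $\cTn$, and reduces to the finite intersection property. For a hypothetical finite family $f_1,\dots,f_m$ with $\bigcap B_{f_j} = \emptyset$, it forms the affine map $F = (f_1,\dots,f_m) : \cTn \to \RR^m$, notes that the compact convex set $F(\cTn)$ misses the nonnegative orthant, separates them by a Hahn-Banach functional $\lambda = \sum \lambda_j x_j$ with $\lambda_j \ge 0$, normalizes $\sum \lambda_j = 1$, and then uses convexity of $\mathcal F$ to conclude $f = \sum \lambda_j f_j \in \mathcal F$ satisfies $f(T) < 0$ on all of $\cTn$ --- contradicting the hypothesis.

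You instead invoke Sion's (or Kneser's) minimax theorem wholesale. This is a valid shortcut: the bilinear pairing $\psi(T,f) = -f(T)$ is affine and continuous in each variable once $\mathcal F$ is embedded in $\RR \oplus \mathbb S_n$, $\cTn$ is compact convex, $\mathcal F$ is convex, and no topology on $\mathcal F$ beyond the inherited finite-dimensional one is needed since only the $T$-side must be compact. The hypothesis gives $\sup_f \min_T (-f(T)) \le 0$; the minimax identity transfers this to the left side; lower semicontinuity of $T \mapsto \sup_f(-f(T))$ on the compact $\cTn$ then produces the desired $\hhs$. (You should also note that this supremum is finite-valued --- bounded below by $-f_0(T)$ for any fixed $f_0 \in \mathcal F$ --- so the attained minimum is a real number, and the argument degenerates correctly if $\mathcal F = \emptyset$.) The trade-off is that the paper's argument is elementary and self-contained, essentially re-deriving exactly the piece of minimax theory it needs via compactness plus Hahn-Banach separation from the positive orthant; your version is shorter at the cost of black-boxing a heavier named theorem whose own proof goes through comparable steps.
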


\begin{proof}
   For $f\in\mathcal F$, let
 \[
   B_f =\{\hh\in \cTn: f(\hh)\ge 0\}\subseteq \cTn.
 \]
  By hypothesis each $B_f$ is non-empty and
  it suffices to prove that
 $$
   \bigcap_{f\in\mathcal F} B_f \neq \emptyset.
 $$
  Since each $B_f$ is compact, it suffices to
  prove that the collection $\{B_f: f\in\mathcal F\}$
  has the finite intersection property.  Accordingly,
  let $f_1,\dots,f_m\in\mathcal F$ be given. Arguing
  by contradiction, suppose
\[
  \bigcap_{j=1}^m B_{f_j} =\emptyset.
\]
Define $F:\mathbb S_n \to \mathbb R^m$  by
\[
  F(\hh)=(f_1(\hh),\dots,f_m(\hh)).
\]
 Then $F(\cTn)$ 
 is both convex and compact because $\cTn$
 is both convex and compact and each
  $f_j$, and hence $F$, is affine linear.
 Moreover, 
  $F(\cTn)$  does not intersect 
 \[
   \mathbb R^m_{\geq0}=\{x=(x_1,\dots,x_m)\in\R^m: x_j\ge 0 \mbox{ for each } j\}.
 \]
 Hence there is a linear functional $\lambda:\mathbb R^m \to \mathbb R$
 such that 
 \[\lambda\big(F(\cTn)\big)<0 
 \quad\text{ and }\quad
\lambda\big(\mathbb R_{\geq0}^m\big) \ge0.
\]
 There exists $\lambda_j \in \R$ such that
$
 \lambda(x) = \sum \lambda_j x_j.
$
  Since $\lambda\big(\mathbb R^m_{\geq0}\big) \ge 0$ it follows that each
  $\lambda_j\ge 0$ and since $\lambda\ne 0$, for at least one $k$,
  $\lambda_k>0$.  Without loss of generality, 
  it may be assumed that $\sum \lambda_j=1$. 
  Let
\[
  f=\sum \lambda_j f_j.
\]
 Since $\mathcal F$ is convex, 
 it follows that $f\in\mathcal F$. On the other hand,
 $f(T)=\lambda(F(T)).$  Hence 
 if $T\in \cTn,$ then  $f(T)<0$.
 Thus, for this $f$ there does not exist
 a $T\in \cTn$ such that $f(T)\ge 0$,
 a contradiction which completes the proof.
\end{proof}

\subsection{A Tracial Spectrahedron Separating Theorem}
\label{subsec:tHBan}
 The following lemma is proved by a variant of the Effros-Winkler construction 
 of separating LMIs (i.e., the matricial Hahn-Banach Theorem) in the theory of matrix convex sets.

\begin{lemma}
 \label{lem:separate}
   Fix positive integers $m,n$, and  suppose that $\cS$ is a nonempty 
 subset of $\smatmg$.  Let $\cU$ denote the subset of $\smatng$  
  consisting of all tuples of the form
\[
 \sum_{\ell=1}^\mu C_\ell Y^\ell C_\ell^*,
\]
 where each $C_\ell$ is $n\times m$, each $Y^\ell\in \cS$ and 
  $\sum C_\ell^\ast C_\ell \preceq I$.  If $B\in\smatng$ is in
  the conventional polar dual of $\cU$, then there exists 
  a positive semidefinite $m\times m$ matrix $T$ with trace at most one such that
\[
   I\otimes T -\sum B_j\otimes Y_j \succeq 0
\]
 for every $Y\in\cS$. 
\end{lemma}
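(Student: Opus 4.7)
The plan is to apply Lemma \ref{lem:cone} to a convex cone $\mathcal F$ of affine linear maps $\mathbb S_m \to \mathbb R$. For each $Y \in \mathcal S$ and each positive semidefinite $P \in M_{nm}$, write $P = (P_{i,i'})_{i,i'=1}^{n}$ in $n \times n$ block form with $m \times m$ blocks, and set $P_\sharp := \sum_{i=1}^n P_{i,i}$; then $\tr((I\otimes T)P) = \tr(T P_\sharp)$. Define
\[
f_{Y,P}(T) \;:=\; \tr\bigl((I\otimes T)P\bigr) - \tr\Bigl(P\textstyle\sum_j B_j \otimes Y_j\Bigr) \;=\; \tr(T P_\sharp) - \tr\Bigl(P\textstyle\sum_j B_j \otimes Y_j\Bigr),
\]
and let $\mathcal F$ be the convex cone of finite nonnegative combinations $\sum_k f_{Y^k,P^k}$.

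The main step is to verify Lemma \ref{lem:cone}'s hypothesis: for each $f \in \mathcal F$ there is some $T \in \mathcal T_m$ with $f(T) \geq 0$. Writing $f = \sum_k f_{Y^k,P^k}$, put $Q := \sum_k P^k_\sharp$ and $R := \sum_k \tr\bigl(P^k\sum_j B_j\otimes Y^k_j\bigr)$, so $f(T) = \tr(TQ) - R$. Taking $T = ee^*$ with $e$ a unit eigenvector of $Q$ for its largest eigenvalue $\alpha := \lambda_{\max}(Q)$ gives $\tr(TQ) = \alpha$, so it suffices to prove $R \leq \alpha$; this is the main obstacle. Each $P^k$ determines a completely positive map $\Psi_{P^k} : M_n \to M_m$ characterized by $\tr((A\otimes C)P^k) = \tr(C\,\Psi_{P^k}(A))$; by Choi's theorem (Theorem \ref{thm:cpstuff}) it has a Kraus representation $\Psi_{P^k}(A) = \sum_\ell (V^k_\ell)^* A V^k_\ell$ with $V^k_\ell \in \mathbb C^{n\times m}$ and $\sum_\ell (V^k_\ell)^* V^k_\ell = \Psi_{P^k}(I) = P^k_\sharp$. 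A straightforward cyclicity computation then identifies $R = \sum_j \tr(B_j Z_j)$, where $Z_j := \sum_{k,\ell} V^k_\ell Y^k_j (V^k_\ell)^*$.

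Now comes the scaling argument. If $\alpha > 0$, replace each $V^k_\ell$ by $V^k_\ell/\sqrt{\alpha}$; the rescaled total $\sum_{k,\ell}(V^k_\ell)^* V^k_\ell / \alpha = Q/\alpha$ has $\lambda_{\max}=1$ and so satisfies $\preceq I_m$. This places the tuple $Z/\alpha$ inside $\mathcal U(n)$, and the polar-dual hypothesis $B \in \mathcal U^{\circ c}$ then gives $\tr(\sum_j B_j Z_j)/\alpha \leq 1$, i.e., $R \leq \alpha$. The case $\alpha = 0$ forces each $P^k = 0$, whence $R = 0$, and any $T$ works. Lemma \ref{lem:cone} then produces a single $T \in \mathcal T_m$ with $f(T) \geq 0$ for every $f \in \mathcal F$. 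Specializing to $P = vv^*$ with $v \in \mathbb C^{nm}$ arbitrary and $Y \in \mathcal S$ yields $v^*\bigl(I\otimes T - \sum_j B_j\otimes Y_j\bigr)v \geq 0$, hence $I\otimes T - \sum_j B_j\otimes Y_j \succeq 0$ for every $Y \in \mathcal S$; since $T \in \mathcal T_m$ is positive semidefinite with $\tr(T)=1 \leq 1$, this gives the required $T$.
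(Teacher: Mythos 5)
Your proof is correct and follows the same strategy as the paper's: build a convex family $\mathcal F$ of affine linear maps on $\mathbb S_m$, verify the pointwise hypothesis of Lemma~\ref{lem:cone} using the polar-dual condition on $B$ together with a scaling trick, extract a single state $T\in\mathcal T_m$, and then specialize to rank-one data to recover positive semidefiniteness. The genuine difference is in the parametrization of $\mathcal F$. The paper's argument works directly with tuples $(C_\ell,Y^\ell)$ of Kraus-type matrices, taking $f_{C,Y}(X)=\tr\bigl(\sum C_\ell XC_\ell^\ast\bigr)-\lambda_B\bigl(\sum C_\ell Y^\ell C_\ell^\ast\bigr)$ and showing $\mathcal F$ is convex by merging tuples; the verification picks $T=\gamma\gamma^\ast$ with $\gamma$ a norm-attaining eigenvector of $D=\sum C_\ell^\ast C_\ell$ and normalizes $\|D\|=1$. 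You instead parametrize by Choi-type positive semidefinite matrices $P$, which makes the final step (specializing $P=vv^\ast$) tidier, but your verification step must convert $P$ back to Kraus form, so you are in effect re-deriving the same computation the paper performs directly.

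One technical point in your write-up deserves care: the map $\Psi_P$ determined by $\tr\bigl((A\otimes C)P\bigr)=\tr\bigl(C\,\Psi_P(A)\bigr)$ is \emph{not} in general completely positive; if $\Phi_P$ denotes the CP map with Choi matrix $P$ (i.e.\ $\Phi_P(E_{ij})=P_{ij}$, which does have a Kraus representation $\Phi_P(A)=\sum V_\ell^\ast AV_\ell$), then $\Psi_P=\Phi_P\circ(\cdot)^T$, and $A\mapsto A^T$ is not completely positive. Thus invoking ``Choi's theorem'' to get a Kraus representation $\Psi_P(A)=\sum V_\ell^\ast AV_\ell$ is not literally justified. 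What is correct is $\Psi_P(A)=\sum V_\ell^\ast A^T V_\ell$, giving $R=\sum_j\tr(B_j^T Z_j)$ rather than $\sum_j\tr(B_j Z_j)$. Over the reals (i.e.\ for real symmetric $B_j$, $Y_j$, and real Choi matrices) the transpose is harmless and your identity holds exactly; the paper's own computation also implicitly works in this setting (note their use of $\mathbb R^n\otimes\mathbb R^m$). But as written, your line ``$\Psi_{P^k}$ is CP, hence has Kraus form $\sum(V^k_\ell)^\ast A V^k_\ell$'' should be corrected to go through $\Phi_{P^k}$ and track the transpose, or you should simply state that you are restricting to the real symmetric setting, as the paper effectively does.
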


\begin{proof}
 Recall the definition of $\la_B$ from Subsection \ref{sec:classical duals}.
  Given $C_\ell$ and $Y^\ell$ as in the statement of the lemma,  define
  $f_{C,Y}: \smatmg\to \mathbb R$ by 
\[
  f_{C,Y}(X) = \tr\big(\sum C_\ell X C_\ell^\ast\big) - \lambda_B\big(\sum C_\ell YC_\ell^\ast\big).
\]
Let $\mathcal F=\{f_{C,Y}: C,Y\}.$ Thus $\mathfrak F$ is a set of affine (real) linear mappings from $\smatmg$ to $\mathbb R$.  To show that $\mathcal F$ is convex, suppose, for $1\le s \le N$, 
 $C^s=(C^s_1,\dots,C^s_{\mu_s})$ is a tuple of $n\times m$ matrices, 
 for $1\le s \le N$ and $1\le j \le \mu_s$ the matrices  $Y^{s,j}$
  are in  $\cS$ and 
 and $\lambda_1,\dots,\lambda_N$ are positive numbers with $\sum \lambda_s =1$.
 In this case, 
\[
 \sum \lambda_s f_{C^s,Y^{s,\cdot}} = f_{C,Y}
\] 
 for
\[
  C = \Big(\frac1{\sqrt{\lambda_s}} C_\ell^s\Big)_{s,\ell}, \ \ \ Y= \Big( Y^{s,\ell} \Big)_{s,\ell}.
\]
Hence $\mathcal F$ is convex. 
 
 Given $n\times m$ matrices  $C_1,\dots,C_\mu$ 
 and $Y^1,\dots,Y^\mu\in\mathcal S$, let $D=\sum C_\ell^\ast C_\ell$. 
 Assuming  $D$ has norm one, there is a unit vector $\gamma$ such that
  $\|D\gamma\| =\|D\|=1$.  Choose $T=\gamma \gamma^\ast$. Thus $T\in\mathcal T_m$. Moreover,
\[
 \tr\big(\sum C_\ell T C_\ell^\ast\big) =\tr(TD) = \langle D\gamma,\gamma\rangle =1.
\]
 Thus, using the assumption that $B$ is in $\cU\cpd$, 
\[
 f_{C,Y}(T) = 1 -\lambda_B\big(\sum C_\ell Y^\ell C_\ell^\ast\big) \ge 0.
\]
 If $D$ is not of norm one, a simple scaling argument gives the same conclusion; that is,
\[
  f_{C,Y}(T) \geq 0. 
\]
 Thus, for each $f\in \mathcal F$ there exists a $T\in \mathcal T_m$ such that $f(T)\ge 0$. 
 By Lemma \ref{lem:cone}, it follows that there is a $\mathfrak{T}\in \cT_m$ such that
 $f_C(\mathfrak{T})\ge 0$ for all $C$ and $Y$; i.e.,
\begin{equation}
 \label{eq:EWineq}
  \tr\big(\sum C_\ell \mathfrak{T} C_\ell^\ast\big) - \lambda_B\big(\sum C_\ell Y^\ell C_\ell^\ast\big) \ge 0,
\end{equation}
 regardless of the norm of $\sum C_\ell^\ast C_\ell$.

 Now the aim is to show that 
\[
  \Delta:=I\otimes \mathfrak{T} - \sum_j B_j\otimes Y_j  \succeq 0
\]
 for every $Y\in\cS$. 
 Accordingly, let $Y\in \cS$ and  $\gamma = \sum e_s \otimes \gamma_s \in \mathbb R^n\otimes \mathbb R^m$ be given. 
 Compute,
\[
 \begin{split}
   \langle \Delta \gamma,\gamma \rangle  
    & = \sum_s \langle \fT \gamma_s,\gamma_s \rangle - \sum_j \sum_{s,t} (B_j)_{s,t} \langle Y_j \gamma_s,\gamma_t\rangle.
 \end{split}
\]
 Now let $\Gamma^\ast$ denote the matrix with $s$-th column $\gamma_s$.  Hence $\Gamma$ is $n\times m$ and
\[
 \begin{split}
   \lambda_B(\Gamma Y \Gamma^\ast) & =  \tr\big(\sum B_j (\Gamma Y_j \Gamma^\ast)\big) \\
 &  = \sum_j \sum_{s,t} (B_j)_{s,t} \langle Y_j\gamma_s,\gamma_t\rangle.
 \end{split}
\]
 Similarly,
\[
 \tr(\Gamma \fT \Gamma^\ast) =  \sum_s \langle  \fT \gamma_s,\gamma_s\rangle.
\]
 Thus, using the inequality  \eqref{eq:EWineq}, 
\[
  \langle \Delta \gamma,\gamma \rangle  = \tr(\Gamma \fT \Gamma^\ast) - \lambda_B(\Gamma Y\Gamma^\ast) \ge 0.
\]
 It is in this last step that the contractively tracial, not just tracial is needed, so that
  it is not necessary for $\Gamma^\ast \Gamma$ to be a multiple of the identity. 
\end{proof}

\begin{proposition}
 \label{prop:main?}
  If $\cY\subseteq\smatg$ is contractively tracial and if $B\in\smatng$ is in the conventional polar dual $\cY(n)\cpd$ of $\cY(n)$,
  then $\cY\subseteq\fH_B$.
\end{proposition}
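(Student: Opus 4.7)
My plan is to reduce the statement to a direct, pointwise application of Lemma \ref{lem:separate}. Fix an integer $m$ and a tuple $Y \in \cY(m)$; I need to produce a positive semidefinite $T \in \mbS_m$ with $\tr(T) \le 1$ satisfying $I \otimes T - \sum_j B_j \otimes Y_j \succeq 0$, since this is exactly the condition $Y \in \fH_B(m)$.

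The first step is to take $\cS = \{Y\} \subseteq \smatmg$ as the singleton and form the associated set $\cU \subseteq \smatng$ from Lemma \ref{lem:separate}, namely
\[
\cU = \Big\{ \sum_\ell C_\ell Y C_\ell^* \; : \; C_\ell \in \C^{n \times m}, \ \sum_\ell C_\ell^* C_\ell \preceq I_m \Big\}.
\]
Because $\cY$ is contractively tracial and $Y \in \cY(m)$, the very definition of contractively tracial says that every such combination lies in $\cY(n)$, so $\cU \subseteq \cY(n)$. Since passage to the conventional polar dual reverses inclusions, this yields $\cY(n)\cpd \subseteq \cU\cpd$, and the hypothesis $B \in \cY(n)\cpd$ therefore gives $B \in \cU\cpd$.

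This is precisely the hypothesis required to invoke Lemma \ref{lem:separate}, which then produces a positive semidefinite $T \in \mbS_m$ with $\tr(T) \le 1$ and $I \otimes T - \sum_j B_j \otimes Y_j \succeq 0$. Hence $Y \in \fH_B(m)$. Since $m$ and $Y \in \cY(m)$ were arbitrary, $\cY \subseteq \fH_B$ as desired.

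There is no real obstacle to overcome: the analytic and geometric content has already been packaged into Lemma \ref{lem:separate} (whose proof relied on the Effros--Winkler style argument together with the finite-intersection Lemma \ref{lem:cone}). The only thing worth emphasizing is that the contractively tracial hypothesis on $\cY$ is exactly what is needed to ensure $\cU \subseteq \cY(n)$, which is the one input Lemma \ref{lem:separate} cannot supply on its own; had $\cY$ been merely tracial (i.e.\ with $\sum C_\ell^* C_\ell = I$), the set $\cU$ would be smaller and the polar-dual inclusion $B \in \cU\cpd$ would be correspondingly weaker.
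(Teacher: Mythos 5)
Your proof is correct and follows the same route as the paper: take $\cS=\{Y\}$, observe that the contractively tracial hypothesis forces the associated set $\cU$ into $\cY(n)$, use polar-dual inclusion reversal to get $B\in\cU\cpd$, and invoke Lemma \ref{lem:separate}. You have merely spelled out the intermediate inclusion $\cU\subseteq\cY(n)$ and its dual consequence, which the paper's (terser) proof leaves implicit.
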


\begin{proof}
   Suppose $\cY$ is contractively tracial and $Y\in \cY(m)$.  Letting $\cS=\{Y\}$ in Lemma \ref{lem:separate}, 
   it follows that there is a $T$ such that
\[
  I\otimes T- \sum B_j\otimes Y_j\succeq 0.
\]
 Thus, $Y\in \fH_B$ and the proof  is complete.
\end{proof}

 We are now ready to state the separation  result for closed
 levelwise convex tracial sets.

\begin{theorem}
 \label{thm:tracehull} \mbox{}
 \begin{enumerate}[\rm (i)]
   \item
   If $\cY\subset\smatg$ is contractively tracial, levelwise convex, and 
   if $Z \in \mathbb S_m^g$ is not in the closure of $\mathcal Y(m)$, then there exists a $B\in \mathbb S_m^g$ such that
   $\mathcal Y\subseteq \fH_B$, but $Z\notin\fH_B$. Hence, 
\[
 \overline{\cY}  =\bigcap\{\fH_B: \fH_B \supseteq \mathcal Y\} = \bigcap_{n\in\N}\bigcap_{B\in \cY(n)\cpd}  \fH_B.
\]
  \item
      The levelwise closed convex contractively tracial hull of a subset $\cY$ of $\smatg$ is 
\[
  \bigcap\{\fH_B: \fH_B \supseteq \mathcal Y\}. 
\]
\end{enumerate}
\end{theorem}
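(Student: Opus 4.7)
The plan is to use the classical Hahn--Banach separation theorem in the finite-dimensional Euclidean space $\smatmg$ together with Proposition~\ref{prop:main?} to produce a separating tracial spectrahedron, and then to check the non-membership of $Z$ by evaluating the defining LMI on a maximally entangled vector.

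For part (i), fix $Z\notin\overline{\cY(m)}$. Since $\cY$ is nonempty and contractively tracial, choosing $C_\ell=0$ shows $0\in\cY(n)$ for all $n$; in particular $0\in\overline{\cY(m)}$. The set $\overline{\cY(m)}$ is a closed convex subset of $\smatmg$ not containing $Z$, so by ordinary Hahn--Banach there exist $B'\in\smatmg$ and $\alpha\in\R$ with $\tr(B'X)\le\alpha<\tr(B'Z)$ for every $X\in\overline{\cY(m)}$. Since $0\in\overline{\cY(m)}$, we have $\alpha>0$; rescaling, set $B=B'/\alpha$, so that $\lambda_B(X)\le 1$ on $\cY(m)$ (i.e.\ $B\in\cY(m)^{\circ c}$) while $\lambda_B(Z)>1$. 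Proposition~\ref{prop:main?} then yields $\cY\subseteq\fH_B$.

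The key remaining step is to see $Z\notin\fH_B$. Suppose, for contradiction, that there is $T\succeq0$ with $\tr(T)\le1$ and $I\otimes T-\sum_j B_j\otimes Z_j\succeq 0$. Evaluate the quadratic form at the (unnormalized) maximally entangled vector $\gamma=\sum_{i=1}^m e_i\otimes e_i\in\C^m\otimes\C^m$. A direct computation gives the identity
\[
\langle (A\otimes C)\gamma,\gamma\rangle=\tr(AC)\quad\text{for symmetric }A,C\in\smatm ,
\]
since $\langle (A\otimes C)\gamma,\gamma\rangle=\sum_{i,j}A_{j,i}C_{j,i}=\tr(AC)$. Applying this termwise yields $\tr(T)\ge\sum_j\tr(B_jZ_j)=\lambda_B(Z)$. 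Combined with $\tr(T)\le1$, this forces $\lambda_B(Z)\le1$, contradicting the strict separation. Hence $Z\notin\fH_B$, as required. The displayed equality in (i) is then immediate: the inclusion $\overline{\cY}\subseteq\bigcap_{n,\,B\in\cY(n)^{\circ c}}\fH_B$ follows from Proposition~\ref{prop:main?} together with the levelwise closedness of $\fH_B$ from Proposition~\ref{prop:easytrace}(c), while the reverse inclusion is precisely the separation statement just proved.

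For part (ii), let $\cZ$ denote the levelwise closed convex contractively tracial hull of $\cY$. By Proposition~\ref{prop:easytrace}, each $\fH_B$ is levelwise closed, levelwise convex and contractively tracial, so any $\fH_B\supseteq\cY$ automatically contains $\cZ$; thus $\cZ\subseteq\bigcap\{\fH_B:\fH_B\supseteq\cY\}$. Conversely, applying part (i) to $\cZ$ itself gives $\cZ=\bigcap\{\fH_B:\fH_B\supseteq\cZ\}$, and this latter family coincides with $\{\fH_B:\fH_B\supseteq\cY\}$, since containing $\cY$ is equivalent to containing $\cZ$ for any set in the intersection class. The two inclusions combine to give the desired equality.

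The principal obstacle is the $Z\notin\fH_B$ step, since $\fH_B$ is defined by an LMI involving the ancilla $T$ and is not simply a half-space on $\smatmg$. The maximally entangled vector trick — essentially a partial trace identity — is what reduces the operator inequality back to the scalar separation $\tr(T)\ge\lambda_B(Z)$; once this is in place the rest of the argument is a bookkeeping application of Hahn--Banach plus Proposition~\ref{prop:main?}.
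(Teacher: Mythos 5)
Your argument is correct and follows essentially the same route the paper takes: classical Hahn--Banach to produce $B\in\cY(m)\cpd$ with $\lambda_B(Z)>1$, Proposition~\ref{prop:main?} to get $\cY\subseteq\fH_B$, and the maximally entangled vector $\gamma=\sum e_i\otimes e_i$ to convert the LMI defining $\fH_B$ back into the scalar inequality $\tr(T)\geq\lambda_B(Z)$, thereby excluding $Z$. You supply one detail the paper leaves implicit --- that a nonempty contractively tracial $\cY$ contains $0$ at every level (take a single $C_\ell=0$), which is what licenses the normalization to $\lambda_B\leq 1$ versus $\lambda_B(Z)>1$ --- but the substance, including the treatment of part~(ii) via the stability of the class of tracial spectrahedra under intersection, is the same as in the paper.
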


\begin{proof}  
 To prove item (i), 
 suppose $Z\in\smatmg$ but $Z\notin \overline{\cY(m)}$. 
 Since $\cY$ is levelwise convex, there is $\lambda_B$ such that $\lambda_B(Y)\le 1$
 for all $Y\in \cY(m)$, but $\lambda_B(Z)> 1$ by the usual
  Hahn-Banach separation theorem for closed convex sets. 
   Thus $B$ is in the conventional polar dual of $\cY(m)\cpd$. 
  From Proposition \ref{prop:main?}, $\cY\subset \fH_B$. 

 On the other hand, if  $T\in\cT_m$ and $\{e_1,\dots,e_m\}$ is an orthonormal basis for $\mathbb R^m,$
  then,  with $e=\sum e_s\otimes e_s\in\mathbb R^m\otimes \mathbb R^m$,
\[
 \big\langle (I\otimes T - \sum B_j\otimes Z_j)e,e\big\rangle =
   \tr(T) - \tr\big(\sum B_j Z_j\big) \ = \ 1 - \lambda_B(Z) < 0.
\]
 Hence $Z\notin \fH_B$ and the conclusion follows. 

To prove item (ii), first note, letting $\mathcal I$ denote the intersection
  of the $\fH_B$ that contain $\cY$, that $\cY\subset\mathcal I$. Since  the intersection of tracial spectrahedra is 
  levelwise closed and convex, and contractively tracial, the levelwise closed convex tracial
  hull $\mathcal H$ of $\mathcal Y$ is also contained in $\mathcal I$. On the other hand, 
  from (i), 
\[
 \mathcal H = \bigcap \{\fH_B: \fH_B \supset \mathcal H\} \supset \mathcal I \supset \mathcal H.\qedhere
\]
\end{proof}

\begin{rem} {The contractive tracial hull of a point.}
 Fix a $Y\in\smatng$ and let $\cY$ denote its contractive tracial hull,
\[
 \cY=\big\{\sum V_j Y V_j^* : \sum V_j^* V_j \preceq I\big\}.
\]
 Evidently each $\cY(m)$ (taking $V_j:\mathbb R^n\to\mathbb R^m$) is a convex set.
 From Lemma \ref{lem:cthullofS}, $\cY$ is closed.  
  Hence Theorem \ref{thm:tracehull} applies and gives a duality description of $\cY$. 
 Namely, $\tilde{Y}$ is in the contractive tracial hull $\cY$ if and only if for each $B$ for which  there exists
 a positive semidefinite $T$ of trace at most one such that
\[
 I\otimes T - \sum B_j\otimes Y_j \succeq 0,
\]
 there exists a positive semidefinite  $\tilde{T}$ of trace at most one  such that 
\[
 I\otimes \tilde{T} - \sum B_j \otimes \tilde{Y}_j \succeq 0. \qedhere
\] 
\end{rem}

\subsection{Tracial Polar Duals}
\label{sec:tduals}

 We now introduce two natural notions of polar duals based on the
  tracial spectrahedra. Rather than  exhaustively
  studying these duals, we list a few properties 
  to illustrate the possibilities.

\subsubsection{\Contra Tracial Dual}
  Suppose $\calK\subset \smatg$. Let $\hK$ denote its 
  \df{\contra tracial dual}
  defined by
\[ 
\hK
= \bigcap_{B \in \calK}  \fH_B.
\] 
  Thus, 
\[ 
 \hK(n) 
 =\big\{Y\in\smatng: \forall B\in \calK\, \exists T\succeq0
 \text{ such that }  \tr(T)\le 1 \text{ and } I\otimes T - \sum B_j \otimes Y_j \succeq 0\big\}.
\]

\begin{proposition}
\label{prop:dualofmatconset}
   If $\calK$ is matrix convex and each $\calK^\circ(n)$ is bounded $($equivalently,
   $\calK(1)$ contains $0$ in its interior$)$, then 
 \begin{enumerate}[\rm(i)]
\item 
$\displaystyle \hK(n) =\big\{Y\in\smatng: \exists T\succeq0, \,  \mbox{such that }  \tr(T)\le 1 \mbox{ and } \forall B\in \calK, \, I\otimes T - \sum B_j \otimes Y_j \succeq 0\big\}; $
\item 
$\displaystyle \hK(n) = \{SMS: M\in \calK^\circ(n), \ S\succeq 0, \ \tr(S^2)\le 1\}.$
\end{enumerate}
\end{proposition}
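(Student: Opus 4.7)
The plan is to establish (i) by a compactness/finite-intersection argument, and then deduce (ii) from (i) together with a kernel analysis. Fix $Y\in\hK(n)$ and, for each $B\in\calK$, set
\[
\mathcal T_B := \bigl\{T\in\cTn : I\otimes T-\textstyle\sum_j B_j\otimes Y_j \succeq 0\bigr\},
\]
where $\cTn=\{T\succeq0:\tr T\le 1\}$ is compact. Each $\mathcal T_B$ is closed in $\cTn$, hence compact, and non-empty by hypothesis on $Y$. To establish the finite intersection property, given $B^1,\dots,B^k\in\calK$ let $B=B^1\oplus\cdots\oplus B^k$; matrix convexity of $\calK$ (closure under direct sums) puts $B$ in $\calK$. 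Any $T\in\mathcal T_B$ witnesses the pencil inequality for each $B^\ell$ simultaneously, since the direct-sum structure makes $I\otimes T-\sum_j B_j\otimes Y_j$ block diagonal with blocks $I\otimes T-\sum_j B^\ell_j\otimes Y_j$. Hence $\bigcap_{B\in\calK}\mathcal T_B\ne\emptyset$ by compactness, proving (i).

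For (ii), the inclusion $\supseteq$ is straightforward: given $Y=SMS$ with $M\in\calK^\circ(n)$, $S\succeq0$, $\tr(S^2)\le 1$, take $T=S^2$. For any $B\in\calK$, membership $M\in\calK^\circ(n)$ yields (after the usual tensor-factor swap via the unitary equivalence of $\mL_M(B)$ and $\mL_B(M)$) that $I\otimes I-\sum_j B_j\otimes M_j\succeq 0$; conjugating by $I\otimes S$ and using $Y_j=SM_jS$ gives $I\otimes T-\sum_j B_j\otimes Y_j\succeq 0$, so this single $T$ exhibits $Y\in\hK(n)$.

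The reverse inclusion is the substantive step. Given $Y\in\hK(n)$, use (i) to fix a single $T\in\cTn$ that works for every $B\in\calK$. Were $T$ invertible, I would be done immediately: put $S=T^{1/2}$, $M_j=T^{-1/2}Y_jT^{-1/2}$, and conjugate $I\otimes T-\sum_j B_j\otimes Y_j\succeq 0$ by $I\otimes T^{-1/2}$ to obtain $M\in\calK^\circ(n)$. The main obstacle is therefore a non-trivial $\ker T$, and the heart of the argument will be showing that each $Y_j$ annihilates $\ker T$ and preserves $(\ker T)^\perp$. Specializing to $B\in\calK(1)$ and invoking the interior hypothesis (Lemma \ref{lem:interiorSame}), the inequality $T-\sum_j\lambda_j Y_j\succeq0$ holds for all $(\lambda_j)$ in a neighborhood of $0\in\R^g$. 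Decomposing $\C^n=\ker T\oplus(\ker T)^\perp$, the $(1,1)$ block of $T$ vanishes, forcing $-\sum_j\lambda_j Y_j^{11}\succeq 0$ for all small $(\lambda_j)$ and hence $Y_j^{11}=0$; positive semidefiniteness of a $2\times 2$ block with zero $(1,1)$ entry then forces the off-diagonal block $Y_j^{12}$ to vanish as well. Setting $S=T^{1/2}$ and $M_j:=S^\dagger Y_j S^\dagger$ (Moore--Penrose pseudoinverse), this support property gives $SM_jS=Y_j$; conjugating $I\otimes T-\sum_j B_j\otimes Y_j\succeq 0$ by $I\otimes S^\dagger$ yields $I\otimes\pi-\sum_j B_j\otimes M_j\succeq 0$, where $\pi$ is the projection onto $(\ker T)^\perp$, and adding $I\otimes(I-\pi)\succeq 0$ produces $M\in\calK^\circ(n)$. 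Since $\tr(S^2)=\tr T\le 1$, this completes the argument.
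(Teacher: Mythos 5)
Your argument is correct, and part (i) is essentially the paper's proof (direct sums plus the finite intersection property on the compact sets $\mathcal T_B\subset\cTn$). Part (ii) takes a genuinely different route in the substantive direction. The paper regularizes: it sets $S_k^+=(S+\tfrac1k)^{-1}$, shows $M_k:=S_k^+YS_k^+\in\calK^\circ(n)$, and then uses the hypothesis that $\calK^\circ(n)$ is compact to extract a convergent subsequence $M_k\to M$, finally claiming $Y=SMS$. You instead make the kernel analysis explicit up front: from $T-\sum_j\lambda_jY_j\succeq0$ on a neighborhood of $0\in\R^g$ you deduce that $Y_j=PY_jP$ where $P$ is the projection onto $\img S$, define $M_j=S^\dagger Y_jS^\dagger$ directly via the pseudoinverse, and conjugate by $I\otimes S^\dagger$, needing no limiting argument or compactness of $\calK^\circ(n)$. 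This is arguably cleaner, and it in fact patches a small gap: the paper's limit only gives $SMS=PYP$, so the identification $Y=SMS$ tacitly requires $Y=PYP$ — precisely the support property your block-matrix argument establishes. The one stylistic point worth noting is that you invoke the interior form of the hypothesis rather than the boundedness form the paper uses, but these are equivalent by Proposition \ref{prop:poDual}, and the paper's own statement records that equivalence.
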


\begin{proof}
 Suppose $K$ is matrix convex.  To prove item (i), 
  let  $Y\in \hK(n)$ be given.
  For each $B$, let $\cT_B=\{T\in\cTn: I\otimes T - \sum B_j\otimes Y_j \succeq 0\}$.  Thus, the hypothesis that
 $Y\in\hK(n)$ is equivalent to assuming that for every $B$ in $\calK$, the set  $\cT_B$ is nonempty.

 That $\cT_B$ is compact will be verified by showing it satisfies the finite intersection property. 
 Now given $B^1,\dots, B^\ell\in \calK$, let $B=\bigoplus_k B^k\in \calK$. Since $B\in \calK,$   there is a $T$ such that
\[
 \bigoplus_k \big(I\otimes T - \sum B^k_j\otimes Y_j\big) = I\otimes T - \sum B_j \otimes Y_j \succeq 0. 
\]
 Hence $T\in \bigcap_{k=1}^\ell \cT_{B^k}$. It follows that the collection $\{\cT_B: B\in \calK\}$ has the finite intersection 
  property and hence there is a $T\in\bigcap_{B\in \calK} \cT_B$ and the forward inclusion in item (i) follows. 
  The reverse inclusion holds whether or not $\calK$ is matrix convex.

  To prove item (ii),  suppose $Y \in \hK(n)$. Thus, by what has already been proved, there is a positive semidefinite
  matrix $S$ such that  $\tr(S^2)\le 1$ and
\beq
\label{eq:hkmatconS}
I\otimes S^2 - \sum B_j \otimes Y_j   \succeq 0,
\eeq
 for all $B\in \calK$.  For positive integers $k$, let $S_k^+$ denote the inverse of $S+\frac{1}{k}$. Multiplying \eqref{eq:hkmatconS} on the left
 and on the right by $I\otimes S_k^+$ yields
\[
 I\otimes P -\sum B_j \otimes S_k^+ Y_j S_k^+ \succeq0,
\]
  where $P$ is the projection onto the range of $S$. 
 It follows that  $M_k=S_k^+YS_k^+ \in \calK^\circ(n)$.   Since
  $\calK^\circ(n)$ is bounded (by assumption) and closed, it is compact and
  consequently a subsequence of $(M_k)_k$ converges to some $M\in \calK^\circ(n)$. Hence, 
  $Y=SMS$.

  Reversing the argument above shows, if $M\in \calK^\circ(n)$ and $S$ is positive semidefinite with $\tr(S^2)\le 1$,
  then $Y=SMS\in \hK(n)$ and the proof is complete.
\end{proof}

\begin{proposition}
 \label{conj:tr-dual-LMI-dom}
   The \contra tracial dual $\hK$  of  a  free spectrahedron $\calK= \cD_{\mL_\Omega}$
   is exactly the set 
   $$\big\{\sum_\ell C_\ell^\ast \Omega C_\ell : \tr \big( \sum C_\ell^\ast C_\ell\big) \le  1\big\}.$$ 
\end{proposition}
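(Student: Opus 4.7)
For the containment $\supseteq$, I would argue by direct computation. Given a tuple $Y = (Y_1,\dots,Y_g)$ with $Y_j = \sum_\ell C_\ell^* \Omega_j C_\ell$ and $T := \sum_\ell C_\ell^* C_\ell$ satisfying $\tr(T)\le 1$, the aim is to verify that $Y \in \fH_B$ for each $B\in\calK$, using this same $T$ as witness. Since $B \in \cD_{\mL_\Omega}$ is equivalent to $\sum_j \Omega_j\otimes B_j \preceq I$, and hence (after flipping tensor factors) to $\sum_j B_j \otimes \Omega_j \preceq I$, I would conjugate by $I\otimes C_\ell$ and sum over $\ell$ to obtain
\[
\sum_j B_j \otimes Y_j \;=\; \sum_\ell (I\otimes C_\ell^*)\Bigl(\sum_j B_j\otimes \Omega_j\Bigr)(I\otimes C_\ell) \;\preceq\; I\otimes \sum_\ell C_\ell^* C_\ell \;=\; I\otimes T,
\]
so $Y \in \fH_B$ as required.

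For the reverse containment $\hK\subseteq\{\sum_\ell C_\ell^*\Omega C_\ell:\tr(\sum_\ell C_\ell^* C_\ell)\le 1\}$, I would invoke Proposition~\ref{prop:dualofmatconset}. Under the hypothesis that $0$ lies in the interior of $\calK$ (equivalently, that $\calK^\circ(n)$ is bounded), part (ii) represents every $Y\in\hK(n)$ as $Y = SMS$ with $M\in\calK^\circ(n) = \cD_{\mL_\Omega}^\circ(n)$, $S\succeq 0$, and $\tr(S^2)\le 1$. By Remark~\ref{rem:contractvaddzero} (the contraction form of Lemma~\ref{lem:dominate-polar}), there is a contraction $V$ presented as a block column $V = \mathrm{col}(V_1,\dots,V_\mu)$, i.e., $\sum_\ell V_\ell^*V_\ell \preceq I$, such that $M_j = \sum_\ell V_\ell^*\Omega_j V_\ell$. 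Setting $C_\ell := V_\ell S$ then produces $Y_j = SM_jS = \sum_\ell C_\ell^*\Omega_j C_\ell$ and $\sum_\ell C_\ell^* C_\ell = S\bigl(\sum_\ell V_\ell^*V_\ell\bigr)S \preceq S^2$, whence $\tr(\sum_\ell C_\ell^* C_\ell) \le \tr(S^2) \le 1$.

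The main obstacle I anticipate is the case in which $0$ is not in the interior of $\calK$, so $\calK^\circ$ is unbounded and Proposition~\ref{prop:dualofmatconset}(ii) does not apply directly. To circumvent this, I would instead rely only on part (i) of that proposition (whose proof uses only matrix convexity and closure under direct sums, not the boundedness assumption) to extract a single $T\succeq 0$ with $\tr(T)\le 1$ witnessing $I\otimes T - \sum B_j\otimes Y_j \succeq 0$ simultaneously for all $B\in\calK$, and then show that the linear map $\phi : \mathrm{span}\{I,\Omega_1,\dots,\Omega_g\} \to M_n$ defined by $\phi(I) = T$ and $\phi(\Omega_j) = Y_j$ is completely positive on this operator system. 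A scaling argument reduces complete positivity of $\phi$ to the $X_0 = I$ case of the matricial positivity condition, which is exactly the hypothesis at hand (with a perturbation $X_0\succeq 0\leadsto X_0+\epsilon I$ to handle the degenerate case). Arveson's extension theorem then extends $\phi$ to a cp map $\Phi:M_d\to M_n$, and Choi's representation $\Phi(A) = \sum_\ell C_\ell^* A C_\ell$ (Theorem~\ref{thm:cpstuff}) evaluated at $A = I$ and $A = \Omega_j$ yields the desired matrices $C_\ell$ with $\sum_\ell C_\ell^* C_\ell = T$ and $\sum_\ell C_\ell^*\Omega_j C_\ell = Y_j$.
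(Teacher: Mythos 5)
Your argument coincides with the paper's in both directions: the conjugation computation for $\supseteq$ (with the tensor flip to pass from $\sum\Omega_j\otimes B_j\preceq I$ to $\sum B_j\otimes\Omega_j\preceq I$) and the appeal to Proposition~\ref{prop:dualofmatconset}(ii) together with Remark~\ref{rem:contractvaddzero} for $\subseteq$, followed by the substitution $C_\ell=V_\ell S$, which is exactly what the paper does.

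The cautionary third paragraph is unnecessary: since $\mL_\Omega$ is \emph{monic}, $\mL_\Omega(0)=I\succ0$, so $0$ is automatically in the interior of $\cD_{\mL_\Omega}(1)$ and the hypothesis of Proposition~\ref{prop:dualofmatconset}(ii) is always satisfied for $\calK=\cD_{\mL_\Omega}$. (Also worth noting: as sketched, that fallback has a gap — the scaling/perturbation step requires first knowing $X_0\succeq0$ from $X_0\otimes I+\sum X_j\otimes\Omega_j\succeq0$, which is exactly where the proof of Corollary~\ref{cor:polardualofdrop} invokes boundedness of $\calK$, so you cannot discard that hypothesis as freely as the paragraph suggests. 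But since the case never arises, this does not affect the correctness of your proof.)
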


\begin{proof}
 Suppose $Y$ is in the \contra tracial dual. 
 By Proposition \ref{prop:dualofmatconset}, 
  there is a positive semidefinite matrix $S$ with $\tr(S^2)\le 1$
   and an $M\in \calK^\circ$ such that $Y=SMS$. 
  Since $M\in \calK^\circ$,  by Remark \ref{rem:contractvaddzero} %
  there is a positive integer $\mu$ and a contraction $V$ such that
\[
  M = V^* (I_\mu \otimes \Omega)V = \sum_k^\mu  V_k^* \Omega V_k. 
\]
 Hence,
\[
   Y = \sum_k S V_k^* \Omega V_k S.
\]
 Finally,
\[
  \tr\big(\sum SV_k^* V_kS\big)\le \tr(S^2)\le 1.
\]

 Conversely suppose $\tr(\sum C_\ell^\ast C_\ell)\le 1$ and $Y=\sum C_\ell^\ast \Omega C_\ell$. 
  Let $T=\sum C_\ell^\ast C_\ell$ and note that for $B\in \calK$,
\[
  I\otimes T - \sum B_j\otimes Y_j = \sum_\ell C_\ell^\ast \big(I\otimes I - \sum_j B_j \otimes \Omega_j\big) C_\ell \succeq 0. \qedhere
\] 
\end{proof}

\subsubsection{\Idem Tracial Dual }
 \label{sec:Idem}
  Given a 
   free set  $\calK\subset\smatg$, we can define another dual
  set we call the \df{\idem} $ \calK^\tpd=(\calK^\tpd(m))_m$ 
  by
$$  \calK^\tpd(m)=   \{B\in\smatmg :  \calK \subset \fH_B \} 
$$
Equivalently, 
\[
  \calK^\tpd(m)=\big\{B\in\smatmg: \forall Y\in\calK \, \exists T\succeq0, \mbox{ such that }  \tr(T)\le 1 \mbox{ and } I\otimes T -\sum B_j\otimes Y_j \succeq 0 \big\}.
\]
 Each $\calK^\tpd(m)$ is levelwise convex. Moreover, if $B\in \calK^\tpd$ and $V^*V\preceq I$, then $V^*BV\in \calK^\tpd$.  On the other hand, there is no
 reason to expect that $\calK^\tpd$ is closed with respect to direct sums. Hence it need not be matrix convex.

  A subset $\cY$ of $\smatg$ is \df{contractively stable}
  if $\sum C_j^\ast Y C_j \in \cY$ for all $Y\in\cY$ such that  $\sum C_j^\ast C_j \preceq I.$ 
 In general, contractively stable sets need not be levelwise convex as Example \ref{ex:constablenotconvex} shows.

\begin{proposition}
 \label{prop:poDualcYisweakmatcon}
 The  set  $\calK^\tpd$ is
 contractively stable. 
 \end{proposition}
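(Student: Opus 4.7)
The plan is to take $B\in\calK^\tpd(m)$, contractions $C_1,\dots,C_N$ with $C_j:\C^n\to\C^m$ satisfying $\sum_j C_j^*C_j\preceq I_n$, set $\tilde B_i=\sum_j C_j^* B_i C_j$, and show $\tilde B\in\calK^\tpd(n)$. Fix an arbitrary $Y\in\calK$; since $B\in\calK^\tpd(m)$, there exists $T\succeq 0$ with $\tr(T)\le 1$ and
\[
I_m\otimes T - \sum_i B_i\otimes Y_i\succeq 0.
\]
The key claim is that the very same $T$ will serve as the witness for $\tilde B$ at this $Y$; since this works uniformly for every $Y\in\calK$ (with a possibly different $T$ per $Y$), this will establish $\tilde B\in\calK^\tpd(n)$.

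To verify the claim, the main computation expands the tensor product as
\[
\sum_j (C_j^*\otimes I)\Bigl(I_m\otimes T-\sum_i B_i\otimes Y_i\Bigr)(C_j\otimes I)
=\Bigl(\sum_j C_j^*C_j\Bigr)\otimes T-\sum_i \tilde B_i\otimes Y_i,
\]
so that
\[
I_n\otimes T-\sum_i\tilde B_i\otimes Y_i
=\Bigl(I_n-\sum_j C_j^*C_j\Bigr)\otimes T+\sum_j (C_j^*\otimes I)\Bigl(I_m\otimes T-\sum_i B_i\otimes Y_i\Bigr)(C_j\otimes I).
\]
The first summand is positive semidefinite since $I_n-\sum_j C_j^*C_j\succeq 0$ and $T\succeq 0$, while the second summand is a sum of conjugates of the positive semidefinite matrix $I_m\otimes T-\sum_i B_i\otimes Y_i$, hence also positive semidefinite. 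Adding, the left-hand side is positive semidefinite, as required.

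There is no real obstacle here beyond bookkeeping; the proof is a direct tensor-product calculation exploiting that $T\succeq 0$ and the contractivity condition $\sum_j C_j^*C_j\preceq I_n$ compensate exactly for the discrepancy between $I_n\otimes T$ and $(\sum_j C_j^*C_j)\otimes T$. Note also that the trace condition on the witness is preserved trivially: we reuse $T$ itself, so $\tr(T)\le 1$ is automatic and no additional estimate (such as on $\tr(\sum_j C_j T C_j^*)$) is needed.
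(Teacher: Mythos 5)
Your proof is correct and uses the same decomposition as the paper: reusing the witness $T$ for $Y$ and writing $I_n\otimes T-\sum_i\tilde B_i\otimes Y_i$ as the sum of $\bigl(I_n-\sum_j C_j^*C_j\bigr)\otimes T$ and $\sum_j(C_j\otimes I)^*\bigl(I_m\otimes T-\sum_i B_i\otimes Y_i\bigr)(C_j\otimes I)$, both manifestly positive semidefinite. The observation that the trace bound $\tr(T)\le 1$ is inherited for free because $T$ itself is reused is exactly the point.
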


\begin{proof}
   Suppose $B\in\calK^\tpd(m)$.
    Let $n\times m$ matrices $C_1,\dots,C_\ell$ such that $\sum C_k^* C_k\preceq I$ be given
  and consider the $n\times n$ matrix $D=\sum C_k B C_k^*$.

 Given $Y\in\calK(p)$, there exists
 a positive semidefinite $p\times p$ matrix $T$ of trace at most one such that
\[
 I\otimes T -\sum B_j\otimes Y_j \succeq 0.
\]
 Thus,
\[
 \begin{split}
  I\otimes T - \sum_{j=1}^g D_j \otimes Y_j 
    &=  I\otimes T - \sum_{j=1}^g \sum_k C_k^* B_jC_k \otimes Y_j \\
    &=  (I-\sum C_k^* C_k)\otimes T + \sum_k (C_k\otimes I)^*\big(I\otimes T- \sum_j B_j\otimes Y_j\big)( C_k\otimes I) \succeq 0.
 \end{split}
\]
 Hence $D\in \calK^\tpd$ and the proof is complete. 
\end{proof}

   The \df{contractive convex hull} of $\cY$ is the smallest levelwise closed  set containing $\cY$ that is 
contractively stable. The following proposition finds the two
hulls defined by applying the two notions of tracial polar duals introduced above.

\begin{proposition}
 \label{prop:doubleup}
 For $\calK\subset \smatg$, the set $\widehat{(\calK^\tpd)}$
  is the levelwise closed convex contractively tracial hull of $\calK$. 
 Similarly, $\left(\widehat \calK\right)^\tpd$
  is the levelwise closed contractively stable  hull of $\calK$. 
\end{proposition}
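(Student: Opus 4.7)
The plan is to deduce both assertions of Proposition \ref{prop:doubleup} from the tracial Hahn--Banach Theorem \ref{thm:tracehull} together with the separation Lemma \ref{lem:separate}. The first assertion is essentially a rewording: by definition $\widehat{(\calK^\tpd)} = \bigcap_{B \in \calK^\tpd} \fH_B$, and the condition $B \in \calK^\tpd$ is literally $\fH_B \supseteq \calK$, so $\widehat{(\calK^\tpd)} = \bigcap\{\fH_B : \fH_B \supseteq \calK\}$, which by Theorem \ref{thm:tracehull}(ii) is exactly the levelwise closed convex contractively tracial hull of $\calK$.

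For the second assertion, write $\mathcal H$ for the target hull, which I interpret as the smallest levelwise closed convex contractively stable set containing $\calK$ (averaging trace-witnesses $T$ shows $(\widehat\calK)^\tpd$ is itself levelwise convex). The inclusion $\mathcal H \subseteq (\widehat\calK)^\tpd$ is immediate: for $C \in \calK$ the intersection $\widehat\calK = \bigcap_{D \in \calK} \fH_D$ sits inside $\fH_C$, so $\calK \subseteq (\widehat\calK)^\tpd$; by Proposition \ref{prop:poDualcYisweakmatcon} the right-hand side is contractively stable, and it is levelwise closed and levelwise convex. For the reverse inclusion I would first verify $\widehat{\mathcal H} = \widehat\calK$. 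Only $\widehat\calK \subseteq \widehat{\mathcal H}$ requires proof, and the calculation
\[
I \otimes T_Y - \sum_i \bigl(\sum_j C_j^* T_i C_j\bigr) \otimes Y_i = I \otimes T_Y - \sum_j (C_j \otimes I)^*\bigl(\sum_i T_i \otimes Y_i\bigr)(C_j \otimes I)
\]
(together with $\sum_i T_i \otimes Y_i \preceq I \otimes T_Y$ and $\sum_j C_j^* C_j \preceq I$) shows that the witness $T_Y$ for $Y \in \fH_T$ with $T \in \calK$ also witnesses $Y \in \fH_D$ when $D = \sum_j C_j^* T C_j$; convex combinations and limits then propagate the conclusion through all of $\mathcal H$.

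Now suppose, toward a contradiction, $B_0 \in (\widehat\calK)^\tpd(m) \setminus \mathcal H(m)$. Scalar Hahn--Banach separates $B_0$ from the closed convex set $\mathcal H(m)$ and produces $Y_0 \in \smatmg$ with $\lambda_{Y_0}(B_0) > 1 \ge \lambda_{Y_0}(D)$ for every $D \in \mathcal H(m)$. To promote this scalar separation to the matrix inequality defining $\fH_D$, I would apply Lemma \ref{lem:separate} with $\cS = \{Y_0\}$: the cyclic trace identity
\[
\tr\bigl(\sum_i D_i \sum_\ell C_\ell Y_{0,i} C_\ell^*\bigr) = \tr\bigl(\sum_i Y_{0,i} \sum_\ell C_\ell^* D_i C_\ell\bigr),
\]
combined with contractive stability of $\mathcal H$ (so $\sum_\ell C_\ell^* D C_\ell \in \mathcal H(m)$ whenever $\sum_\ell C_\ell^* C_\ell \preceq I_m$), places each $D \in \mathcal H$ in the conventional polar dual of $\{\sum_\ell C_\ell Y_0 C_\ell^* : \sum_\ell C_\ell^* C_\ell \preceq I_m\}$. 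Lemma \ref{lem:separate} then yields $T \succeq 0$ with $\tr T \le 1$ and $I \otimes T - \sum_i D_i \otimes Y_{0,i} \succeq 0$, i.e., $Y_0 \in \fH_D$, so $Y_0 \in \widehat{\mathcal H} = \widehat\calK$.

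Since $B_0 \in (\widehat\calK)^\tpd$ and $Y_0 \in \widehat\calK(m)$, there is $T_0 \succeq 0$ with $\tr T_0 \le 1$ and $I_m \otimes T_0 - \sum_i B_{0,i} \otimes Y_{0,i} \succeq 0$. Evaluating this inequality on the maximally entangled vector $v = \sum_s e_s \otimes e_s$ (for an orthonormal basis $\{e_s\}$ of $\C^m$) yields $\tr T_0 \ge \lambda_{Y_0}(B_0) > 1$, contradicting $\tr T_0 \le 1$. The main obstacle is the construction of $Y_0$ inside $\widehat{\mathcal H}$: it is precisely there that Lemma \ref{lem:separate} and the contractive stability of $\mathcal H$ combine to upgrade the scalar Hahn--Banach separation to the $T$-witness required by membership in each $\fH_D$.
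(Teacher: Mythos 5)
Your argument is correct, but it takes a more self-contained route than the paper for the second half. The first half is identical: rewrite $\widehat{(\calK^\tpd)}=\bigcap\{\fH_B:\fH_B\supseteq\calK\}$ and quote Theorem~\ref{thm:tracehull}(ii). For the second half, the paper's proof is a one-line chain of set identities
\[
(\widehat\calK)^\tpd=\{B:\widehat\calK\subset\fH_B\}=\bigcap_{Y\in\widehat\calK}\fHT_Y=\bigcap\{\fHT_Y:\fHT_Y\supset\calK\},
\]
where the second equality is just the symmetry $Y\in\fH_B\Leftrightarrow B\in\fHT_Y$, the third is item~(6) of Lemma~\ref{lem:separate-idem}, and item~(5) of that lemma identifies the last intersection with the hull. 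What you do instead is effectively re-prove items~(3)--(5) of Lemma~\ref{lem:separate-idem} in situ: you establish the easy inclusion $\mathcal H\subseteq(\widehat\calK)^\tpd$ from $\calK\subseteq(\widehat\calK)^\tpd$ and Proposition~\ref{prop:poDualcYisweakmatcon}, then run the Effros--Winkler-style separation (scalar Hahn--Banach to get $Y_0$, the cyclic trace identity plus contractive stability of $\mathcal H$ to place each $D\in\mathcal H$ in $\cU\cpd$, then Lemma~\ref{lem:separate}, then the maximally entangled vector). That works, and it makes the Hahn--Banach mechanism visible where the paper defers it to a prepared lemma. Two small remarks: (a) the subclaim $\widehat\calK\subseteq\widehat{\mathcal H}$, for which you give the explicit conjugation calculation, is never actually used --- you only need the trivial inclusion $\widehat{\mathcal H}\subseteq\widehat\calK$ coming from $\calK\subseteq\mathcal H$, so that paragraph is a dead branch; and (b) you quietly corrected the wording of the statement by inserting ``convex'' into the target hull, which is the right reading (it matches Lemma~\ref{lem:separate-idem}(5), and without it the statement would be false since $(\widehat\calK)^\tpd$ is convex while contractively stable hulls need not be, cf.\ Example~\ref{ex:constablenotconvex}).
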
 

The proof of the second statement rests on the following companion
 to Lemma \ref{lem:separate}.  Recall, from equation \eqref{eq:fHT} the opp-tracial spectrahedron,
\[
 \fHT_Y =\{B: \exists T \succeq 0 \mbox{ such that } \tr(T)\le 1, \ \ I\otimes T - \sum B_j\otimes Y_j \succeq 0\}.
\]
 \index{opp-tracial spectrahedron}

\begin{lemma}
 \label{lem:separate-idem}
   Fix positive integers $m,n$, and  suppose that $\cS$ is a nonempty 
 subset of $\smatng$.  Let $\cU$ denote the subset of $\smatmg$  
  consisting of all tuples of the form
\[
 \sum_{\ell=1}^\mu C_\ell^* B^\ell C_\ell,
\]
 where each $C_\ell$ is $n\times m$, each $B^\ell\in \cS$ and 
  $\sum C_\ell^\ast C_\ell \preceq I$.  
  \ben[\rm(1)]
  \item
   If $Y\in\smatmg$ is in
  the conventional polar dual of $\cU$, then there exists 
  a positive semidefinite $m\times m$ matrix $T$ with trace at most one such that
\[
   I\otimes T -\sum B_j\otimes Y_j \succeq 0
\]
 for every $B\in\cS$. 
\item
 The tracial spectrahedra  $\fHT_Y$ are  closed and contractively stable. 
\item
 If $\calK\subset \smatg$ is contractively stable and if $Y\in\smatmg$ is in the conventional polar dual $\calK(m)\cpd$ of
 $\calK(m)$, then $\calK \subset \fHT_Y$.
\item
If $\calK\subset \smatg$ is levelwise  closed and convex, and contractively stable, then
\[
 \calK  = \bigcap \{ \fHT_Y : \fHT_Y\supset \calK\}= \bigcap_n \bigcap \{ \fHT_Y : Y\in\calK(n)\cpd \}.
\]
\item
The levelwise closed and convex  contractively stable hull of $\calK\subset\smatg$ is 
\[
  \bigcap \{ \fHT_Y : \fHT_Y \supset \calK\}.
\]
\item
 For $\calK\subset \smatg$, we have $Y\in\hK(n)$ if and only if $\calK\subset \fHT_Y$. 
\een
\end{lemma}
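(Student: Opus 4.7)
The plan is to follow the roadmap that established the analogous results for $\fH_B$ in Lemma \ref{lem:separate}, Proposition \ref{prop:easytrace}, Proposition \ref{prop:main?}, and Theorem \ref{thm:tracehull}, swapping the roles of $B$ (now the fixed dual variable) and $Y$ (now the varying primal variable). The single structural change is that the relevant contractive action ceases to be $Y\mapsto \sum C_\ell Y C_\ell^*$ (contractively tracial) and becomes $B\mapsto \sum C_\ell^* B C_\ell$ (contractively stable). Items (1) and (2) parallel Lemma \ref{lem:separate} and Proposition \ref{prop:easytrace}; (3) parallels Proposition \ref{prop:main?}; (4) and (5) parallel Theorem \ref{thm:tracehull}; and (6) is a direct unpacking of definitions.

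For (1), I would parametrize the family $\cF$ of affine linear functionals on $\mbS_m$ by finite tuples $(C_1,\ldots,C_\mu;B^1,\ldots,B^\mu)$ with $B^\ell\in\cS$ and $C_\ell$ an arbitrary $n\times m$ matrix, setting
\[
h_{C,B}(X) := \tr\Bigl(X \sum_\ell C_\ell^* C_\ell\Bigr) - \lambda_Y\Bigl(\sum_\ell C_\ell^* B^\ell C_\ell\Bigr).
\]
Convexity of $\cF$ is immediate from concatenation of tuples with $\sqrt{\lambda}$ and $\sqrt{1-\lambda}$ weights. To produce, for each $h_{C,B}\in\cF$, a $T\in\cT_m$ with $h_{C,B}(T)\geq0$, set $D:=\sum C_\ell^*C_\ell$ and take $T:=\gamma\gamma^*$ for a unit top eigenvector $\gamma$ of $D$, so $\tr(TD)=\|D\|$. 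Rescaling $C_\ell\mapsto C_\ell/\sqrt{\|D\|}$ places the tuple in the contractive regime, so $\|D\|^{-1}\sum C_\ell^*B^\ell C_\ell\in\cU$, whence the hypothesis $Y\in\cU\cpd$ yields $\lambda_Y(\sum C_\ell^*B^\ell C_\ell)\leq\|D\|$, and thus $h_{C,B}(T)\geq0$. Lemma \ref{lem:cone} then furnishes a common $\fT\in\cT_m$ with $h_{C,B}(\fT)\geq0$ for every $(C,B)$. To close the argument, fix $B\in\cS$ and $\gamma=\sum_s e_s\otimes\gamma_s\in\C^n\otimes\C^m$; let $\Gamma$ be the $n\times m$ matrix whose $s$-th row is $\gamma_s^*$ and apply $h_{C,B}$ with $\mu=1$, $C_1=\Gamma$, $B^1=B$. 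The resulting inequality $\tr(\fT\Gamma^*\Gamma)-\lambda_Y(\Gamma^*B\Gamma)\geq0$ is exactly $\langle(I\otimes\fT-\sum B_j\otimes Y_j)\gamma,\gamma\rangle\geq0$, by the same matrix-element calculation as at the end of the proof of Lemma \ref{lem:separate}.

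The remaining items then fall into place. For (2), levelwise closedness is the standard extraction of a convergent subsequence of witnesses $T_k$ from the compact set of positive semidefinite matrices with trace at most one, while contractive stability is the identity
\[
I_n\otimes T - \sum_j \Bigl(\sum_\ell C_\ell^*B_j C_\ell\Bigr)\otimes Y_j = \bigl(I_n - \sum_\ell C_\ell^*C_\ell\bigr)\otimes T + \sum_\ell (C_\ell\otimes I)^*\bigl(I_m\otimes T - \sum B_j\otimes Y_j\bigr)(C_\ell\otimes I),
\]
which is a sum of positive semidefinite terms whenever $T$ witnesses $B\in\fHT_Y(m)$ and $\sum C_\ell^*C_\ell\preceq I_n$, so the same $T$ witnesses the image. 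For (3), applying (1) with $\cS:=\{B\}$ for any $B\in\calK(n)$, contractive stability of $\calK$ places the contractive stable hull $\cU$ of $\{B\}$ inside $\calK(m)$, so $Y\in\calK(m)\cpd$ forces $Y\in\cU\cpd$ and (1) produces the required witness. For (4), $\supseteq$ is (3); for $\subseteq$, given $Z\notin\calK(m)$, the scalar Hahn--Banach theorem yields $Y\in\calK(m)\cpd$ with $\lambda_Y(Z)>1$, and pairing any candidate witness $T$ (satisfying $\tr(T)\leq1$) against $\gamma:=\sum_s e_s\otimes e_s$ gives $\tr(T)-\lambda_Y(Z)<0$, forbidding $Z\in\fHT_Y(m)$. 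Item (5) is immediate from (4): the right-hand intersection is levelwise closed, convex, and contractively stable (intersections of such are such), contains $\calK$, and, by (4) applied to itself, coincides with its own levelwise closed convex contractively stable hull. Finally, (6) is a verbatim match of definitions, since both $Y\in\hK(n)$ and ``$\calK\subseteq\fHT_Y$'' assert that for every $B\in\calK$ there exists $T\succeq0$ of size $n$ with $\tr(T)\leq1$ and $I\otimes T - \sum B_j\otimes Y_j\succeq0$.

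The principal obstacle is item (1); the rest is essentially bookkeeping against the parallel contractively tracial story. Within (1), the delicate design choice is the placement of $D=\sum C_\ell^*C_\ell$ in the $X$-slot of $h_{C,B}$ (rather than, say, $\tr(X)$ alone): this is precisely what turns the target inequality $\langle(I\otimes\fT-\sum B_j\otimes Y_j)\gamma,\gamma\rangle\geq0$ into an evaluation of $h_{C,B}$ at a single-term tuple, and the rescaling step is precisely what converts the polar-dual hypothesis $Y\in\cU\cpd$ into the quantitative bound against $\|D\|$ required to exhibit a valid $T\in\cT_m$ before appealing to Lemma \ref{lem:cone}.
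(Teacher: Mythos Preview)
Your proposal is correct and follows essentially the same approach as the paper. The paper in fact omits the proof of item (1), saying only that it ``is similar to the proof of Lemma \ref{lem:separate},'' and handles item (2) by pointing to Proposition \ref{prop:poDualcYisweakmatcon}; your reconstruction of these arguments, as well as your treatment of items (3)--(6), matches the paper's line of reasoning (including the use of $e=\sum_s e_s\otimes e_s$ to rule out $Z\in\fHT_Y$ in (4) and the sandwich argument for (5)).
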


\begin{proof}
 The proof of item (1) is similar to the proof of Lemma \ref{lem:separate}  and is omitted.
 Likewise, the proof of item (2) follows an argument given in the proof of  Proposition \ref{prop:poDualcYisweakmatcon}.

 To prove (3), suppose that $Y\in\calK(m)\cpd$.
   Given $B\in\calK(n)$, an application
 of the first part of the lemma with $\cS=\{B\}$ produces an $m\times m$ positive semidefinite
 matrix $T$ with $\tr(T)\le 1$ such that   $I\otimes T -\sum B_j\otimes Y_j \succeq 0$. 
 Hence, $B\in \fHT_Y.$
 
 Moving on to item (4). From (3), if $Y\in\calK(m)\cpd$, then $\calK \subset \fHT_Y$.
 On the other hand, if $Y\in\smatmg$ and $\calK\subset \fHT_Y$, then, 
 for $B\in\calK(m)$,
\[
  I\otimes T - \sum B_j\otimes Y_j \succeq 0
\]
 for some positive semidefinite $T$ with trace at most one. In particular, 
 with $e=\sum_{s=1}^m  e_s\otimes e_s\in\mathbb R^m\otimes \mathbb R^m$,
\[
  0 \le  \langle I\otimes T - \sum B_j\otimes Y_j e,e\rangle  = \tr(T) - \lambda_Y(B).
\]
 Hence $Y\in\calK(m)\cpd$.  Continuing with the proof of (4), 
  from item (3), 
\[
 \calK \subset \bigcap_n \bigcap\{\fHT_Y: Y\in\calK(n)\cpd \}.
\]
 To establish the reverse inclusion, suppose that $C$ is not in 
  $\calK(m)$. Since  $\calK(m)$ is assumed closed and convex,
  there exists a $Y\in\calK(m)\cpd$, the conventional polar dual of $\calK(m)$ (so that
 $\lambda_Y(\calK(m))\le 1$) with $\lambda_Y(C)>1$.  In particular, $\calK\subset \fHT_Y$.
 On the other hand, if $T$ is $m\times m$ and  positive semidefinite with trace at most one, then
  with $e=\sum e_s\otimes e_s,$ 
\[ 
 \langle (I\otimes T-\sum C_j\otimes Y_j)e,e \rangle
  =\tr(T) - \sum_j \tr(C_jY_j) = \tr(T)-\lambda_Y(C) <0.
\]
 Hence, $C\notin \fHT_Y$. 
 
 To prove item (5), let $\mathcal H$ denote the contractively stable hull of $\calK$. 
 Let also $\mathcal I$ denote the intersection of tracial spectrahedra $\fHT_Y$
  such that $\calK\subset \fHT_Y$.  Evidently $\mathcal H \subset\mathcal I$.  On
  the other hand, using item (4), 
\[
 \mathcal K \subset \mathcal H  \subset \mathcal I \subset \bigcap \{\fHT_Y: \fHT_Y  \supset \mathcal H \} =\mathcal H.
\]

 Finally, for item (6), first suppose $Y\in\hK(n)$. By definition, for each $B\in\calK$
 there is a positive semidefinite $T$ of trace at most one such that 
 $I\otimes T-\sum B_j\otimes Y_j \succeq 0$.  Hence $\calK \subset \fHT_Y$.
 Conversely, if $B\in\fHT_Y$, then $I\otimes T-\sum B_j\otimes Y_j\succeq 0$
  for some positive semidefinite $T$ of trace at most one depending on $B$. 
  Thus, if $\calK\subset \fHT_Y$, then $Y\in\hK(n)$. 
\end{proof}

\begin{proof}[Proof of Proposition {\rm\ref{prop:doubleup}}]
 Since
$$
\widehat{(\calK^\tpd)} = \bigcap_{B \in \calK^\tpd}  \fH_B =
\bigcap_{ \calK \subset \fH_B }  \fH_B, 
$$
item (ii) of Theorem \ref{thm:tracehull}  gives the conclusion of the first
 part of the proposition.

 Likewise, 
\[
 (\hat{\calK})^\tpd = \{B: \hat{\calK} \subset \fH_B\}
  = \bigcap_{Y\in\hat{\calK}} \fHT_Y =\bigcap\{\fHT_Y: \fHT_Y\supset \calK\}
\]
 and the term on the right hand side is, by Lemma \ref{lem:separate-idem},
  the closed contractive convex  hull  of $\calK$. 
\end{proof}

\subsection{Matrix Convex Tracial Sets and Free Cones}
 \label{sec:tcs}
 In this subsection we  introduce and study properties of free (convex) cones.

 A subset $\cS$ of $\bbS^g$ is a \df{free cone}  
 if for all positive integers $m,n,\ell$, tuples $T\in\cS(n)$ and $n\times m$ matrices
 $C_1,\dots,C_\ell$, the tuple $\sum C_i^*TC_i$ is in $\cS(m)$. The set $\cS$ is a 
 \df{free convex cone} if for all positive integers $m,n,\ell$,
  tuples $T^1,\dots,T^\ell \in \cS(n)$ and $n\times m$ matrices $C_1,\dots,C_\ell$,
  the tuple $\sum_i C_i^* T^i C_i$ lies in $\cS(m).$ 
  Finally, a subset $\cY$ of $\smatg$ is a \df{contractively tracial convex set} if $\cY$ is contractively tracial
  and given positive integers $m$, $n$, $\mu$ and $Y^1,\dots,Y^\mu \in \cY(m)$ and 
  $n\times m$ matrices $C_1,\dots,C_\mu$ with $\sum C_j^\ast C_j \preceq I$, the tuple
\[
  \sum C_j Y^j C_j^\ast
\]
  lies in $\cY(n)$. 
  This condition is an analog to matrix convexity
  of a set containing $0$ which we studied earlier in this paper.
Surprisingly:

  \begin{prop}\label{prop:surpCone}
    Every contractively tracial convex set is a free convex cone.
  \end{prop}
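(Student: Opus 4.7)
The plan is to first show closure of $\cY$ under positive scaling, then remove the contractivity constraint $\sum C_j^* C_j \preceq I$ from the definition via rescaling. Two easy preliminary observations come for free: $0 \in \cY(n)$ for every $n$ (take $\mu = 1$, $C_1 = 0$), and $tY \in \cY(m)$ for every $Y \in \cY(m)$ and $t \in [0, 1]$ (take $\mu = 1$ and $C_1 = \sqrt t\, I_m$, so $C_1^* C_1 = t I_m \preceq I_m$). The whole content of the proposition is therefore to get $tY \in \cY(m)$ for $t > 1$; the free convex cone property will follow by a standard rescaling.

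The heart of the argument is an inflate-and-collapse construction that produces $NY \in \cY(m)$ for each positive integer $N$. Fix $Y \in \cY(m)$ and let $V_i \colon \C^m \to \C^{Nm}$, for $i = 1, \ldots, N$, be the isometric embedding onto the $i$-th coordinate block, so that $V_i^* V_i = I_m$ and the projections $V_i V_i^* = E_{ii} \otimes I_m$ are pairwise orthogonal with sum $I_{Nm}$. Single-element contractive traciality places each inflated tuple $Y^i := V_i Y V_i^* = E_{ii} \otimes Y$ in $\cY(Nm)$. Now apply the $\mu$-term convex combination axiom of the contractively tracial convex structure to these $N$ inflated elements with coefficient matrices $C_i := V_i^*$ of size $m \times Nm$: the contractivity hypothesis is saturated, since $\sum_i C_i^* C_i = \sum_i V_i V_i^* = I_{Nm}$, while the output collapses cleanly,
\[
 \sum_{i=1}^N C_i Y^i C_i^* \;=\; \sum_{i=1}^N V_i^*(V_i Y V_i^*)V_i \;=\; \sum_{i=1}^N (V_i^* V_i)\, Y\, (V_i^* V_i) \;=\; N Y.
\]
So $NY \in \cY(m)$, and combining with the $t \leq 1$ step (applied to $NY$ with scalar $t/N$) yields $tY \in \cY(m)$ for every $t \geq 0$.

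With scaling closure in hand, the free convex cone property drops out. Given any $T^1, \ldots, T^\ell \in \cY(n)$ and any $n \times m$ matrices $C_1, \ldots, C_\ell$ (no norm constraint), choose $K > 0$ so that $\sum_i C_i C_i^* \preceq K^2 I_n$. Applying contractively tracial convexity at level $n$ with the scaled coefficient matrices $C_i^* / K$ of size $m \times n$ --- whose squared sum is $K^{-2}\sum_i C_i C_i^* \preceq I_n$ --- yields $K^{-2}\sum_i C_i^* T^i C_i \in \cY(m)$, and multiplying by $K^2$ through the scaling step produces $\sum_i C_i^* T^i C_i \in \cY(m)$. The main obstacle is really the inflation step itself: the naive attempt $C_1 = \sqrt t\, I_m$ fails for $t > 1$ because $C_1^* C_1 = tI \not\preceq I$, so one must genuinely exploit the multi-term freedom in the definition by first spreading $Y$ across $N$ orthogonal copies at the inflated level $Nm$ and then reassembling them.
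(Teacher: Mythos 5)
Your proof is correct and runs on the same engine as the paper's --- inflate a tuple to a larger level $Nm$ by a single isometric conjugation (the contractively tracial axiom), then recombine with a contractive family that saturates $\sum C_i^*C_i = I$ (the convex axiom) --- but it is organized more directly. The paper applies the inflate-and-collapse once to produce $\bigoplus_\ell Y^\ell$ at level $n\nu$ (its collapse is a map $n\nu \to n\nu$), thereby establishing that $\cY$ is closed under direct sums, and then invokes the separate Lemma~\ref{lem:sums-convex-redo}(2), whose own proof repeats the inflate-and-collapse to obtain the integer scaling $\ell Y\in\cY$ and then rescales by a factor $\sqrt k$. Your collapse maps $Nm \to m$ instead of $Nm\to Nm$, so it lands directly on $NY\in\cY(m)$ in a single pass without ever formulating ``closed under direct sums'' as an intermediate statement or routing through the auxiliary lemma; your terminal rescaling by $K$ is then essentially identical to the rescaling in that lemma's proof. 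You also make explicit a point the paper leaves implicit: the collapse genuinely requires the convex axiom, not just contractive traciality, since the $N$ inflated tuples $E_{ii}\otimes Y$ are distinct elements of $\cY(Nm)$. The net effect is the same proposition reached by a shorter path with one fewer moving part.
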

    
  For the proof of this proposition we introduce an auxiliary notion and then give a lemma.  
A subset $\cY$ of $\smatg$
 is \df{closed with respect to identical direct sums}
   if for each $Y\in\cY$ and positive integer $\ell$, the tuple $ I_\ell\otimes Y$
   is in $\cY$.

\begin{lemma}
 \label{lem:sums-convex-redo}
 Suppose $\cY\subseteq\mathbb S^g$.
 \ben[\rm (1)]
 \item If $\cY$ 
  is contractively tracial and  closed with respect to identical direct sums, 
  then $\cY$ is a free  cone. 
\item
  If $\cY$ is contractively tracial and closed with respect to direct sums, then $\cY$
  is a free convex cone. 
\item
 If  $\cY$ is a tracial set containing $0$ which is levelwise convex and closed with respect to identical direct sums, 
 then each $\cY(m)$ is a  cone in the ordinary sense.
 \een
\end{lemma}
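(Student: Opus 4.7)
The plan is to treat the three items in order, using (1) to obtain (2), and working around the fact that the contractively tracial hypothesis only lets us ``shrink'' via $\sum C_\ell^* C_\ell \preceq I$; closure under (identical) direct sums will be what absorbs the missing scale.

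For part (1), fix $T\in\cY(n)$ and $n\times m$ matrices $C_1,\dots,C_\ell$; the goal is $\sum_i C_i^* T C_i\in\cY(m)$. I would pick a positive integer $N$ with $N\ge \|\sum_i C_iC_i^*\|$ and use closure under identical direct sums to place $I_N\otimes T$ in $\cY(nN)$. Then introduce the $\ell N$ matrices $D_{i,k}$ of size $m\times nN$, for $1\le i\le \ell$ and $1\le k\le N$, each having $\tfrac{1}{\sqrt N}C_i^*$ in block column $k$ and zeros elsewhere. A direct block computation gives
\[
\sum_{i,k} D_{i,k}^* D_{i,k} \;=\; I_N\otimes \frac{1}{N}\sum_{i=1}^\ell C_i C_i^* \;\preceq\; I_{nN}, \qquad \sum_{i,k} D_{i,k}(I_N\otimes T)D_{i,k}^* \;=\; \sum_{i=1}^\ell C_i^* T C_i,
\]
the second identity arising because each $D_{i,k}$ sees only the $k$-th block of $I_N\otimes T$, yielding $\tfrac 1N C_i^* T C_i$ per pair $(i,k)$; the $N$ copies in $k$ sum to $C_i^*TC_i$. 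The contractively tracial hypothesis applied to $I_N\otimes T$ with the $D_{i,k}$ then delivers the desired conclusion.

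For part (2), closure under direct sums certainly entails closure under identical direct sums, so part (1) already shows $\cY$ is a free cone. Given $T^1,\dots,T^\mu\in\cY(n)$ and $n\times m$ matrices $C_1,\dots,C_\mu$, use direct sums to place $T:=T^1\oplus\cdots\oplus T^\mu$ in $\cY(n\mu)$, and let $\widehat C$ denote the $n\mu\times m$ column-stacking of the $C_i$. Then $\widehat C^* T \widehat C=\sum_i C_i^* T^i C_i$, which lies in $\cY(m)$ by the free cone property from part (1). For part (3), I would first propagate $0\in\cY$ to $0\in\cY(m)$ for every $m$ via identical direct sums. Given $Y\in\cY(m)$ and a positive integer $N$, apply the (exact) tracial hypothesis to $I_N\otimes Y\in\cY(mN)$ using the $m\times mN$ matrices $C_k$ having $I_m$ in block column $k$ and zero in the other $N-1$ block columns; they satisfy $\sum_k C_k^* C_k=I_{mN}$ exactly and give $\sum_k C_k(I_N\otimes Y)C_k^*=NY$, hence $NY\in\cY(m)$. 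For arbitrary $\lambda\ge 0$, choose an integer $N\ge\max(1,\lambda)$; then
\[
\lambda Y \;=\; \tfrac{\lambda}{N}(NY)+\bigl(1-\tfrac{\lambda}{N}\bigr)\cdot 0
\]
is a levelwise convex combination in $\cY(m)$, so $\cY(m)$ is a cone.

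The main obstacle sits entirely inside part (1): the naive rescaling $\widetilde C_i=C_i/\sqrt\lambda$ with $\lambda=\|\sum C_i^*C_i\|$ only yields $\lambda^{-1}\sum C_i^*TC_i\in\cY(m)$, and there is no intrinsic way within a contractively tracial set to ``undo'' such a shrink. The device of spreading $C_i^*$ across $N$ disjoint block columns of the $D_{i,k}$ trades a $1/N$ penalty (needed to force $\sum D_{i,k}^* D_{i,k}\preceq I$) for exactly $N$ summands of $T$ (provided by $I_N\otimes T$), producing the sum with coefficient $1$ and no residual scale.
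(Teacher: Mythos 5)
Your proof is correct and rests on the same engine as the paper's: closure under identical direct sums supplies the $N$-fold copy of $T$ that compensates for the $1/N$ rescaling needed to make the conjugating family contractive. The packaging of item (1) differs. The paper argues in two stages: it first conjugates $I_\ell\otimes Y$ by isometric block-row embeddings to establish $\ell Y\in\cY(n)$ for every positive integer $\ell$, then normalizes $D_j=C_j/\sqrt k$ so each $\|D_j\|\le1$, places the $D_j$ in \emph{disjoint} block columns so that $\sum M_j^*M_j=\mathrm{diag}(D_1^*D_1,\dots,D_\ell^*D_\ell)\preceq I$, obtains $\tfrac1k\sum C_j^*TC_j\in\cY(m)$, and finally applies the integer-scaling step to restore the factor $k$. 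Your construction fuses these two stages: by spreading $\tfrac1{\sqrt N}C_i^*$ across all $N$ block columns of $D_{i,k}$ (so that different $i$ overlap in support), the contraction bound becomes $N\ge\|\sum_iC_iC_i^*\|$ rather than $N\ge\max_j\|C_j\|^2$, and the $N$ copies of $T$ in $I_N\otimes T$ supply the missing factor directly, with no separate integer-scaling lemma needed. The paper proves (1) and (2) in parallel by noting that the same block construction works under either closure hypothesis, whereas you derive (2) cleanly from (1) by column-stacking the $C_i$ and applying the free-cone property to $\bigoplus T^i$. Your item (3) matches the paper's argument (integer scaling, levelwise convexity, and $0\in\cY(m)$), and your preliminary propagation of $0$ across all levels via identical direct sums and exact tracial conjugation is a correct, if implicit in the paper, step.
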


\begin{proof} 
 To prove the first statement,  let  $Y\in \cY(n)$ and a positive integer $\ell$ be given.
 Let $V_k$  denote the block $1\times \ell$ row matrices with $m\times n$
  matrix entries with $I_n$ in the $k$-th position and $0$ elsewhere,
  for $k=1,\dots,\ell$.  It follows that $\sum V_k^\ast V_k = I$.
  Since also $I_\ell \otimes Y$ is in $\cY$ and $\cY$ is tracial,
\[
  \sum V_k \big(Y\otimes I_\ell \big) V_k^* = kY \in \cY(n).
\]

  Now let  positive integers $m$ and $\ell$ 
  and $m\times n$ matrices $C_1,\dots,C_\ell$ 
 and $Y^1,\dots,Y^\ell \in \cY(n)$ be given. 
  Choose a positive integer $k$ such that each
  $D_j=\frac{C_j}{\sqrt{k}}$ has norm at most one. 
  Consider $M_j$ equal the block $1\times \ell$ row matrix  with
   $m \times n$ entries  with $D_j$  in the $j$-th position
 and $0$ elsewhere, for $j=1,\dots,\ell$.  It follows that 
\[ 
\sum_j M_j^\ast M_j =  \mbox{diag}( D_1^* D_1 , \dots , D_\ell^* D_\ell ) \preceq I.
\] 
 Since $\cY$ is tracial, and assuming either $Y^j=Y^k$ for all $j,k$ 
  and $\cY$ is closed under identical direct sums 
  or assuming that $\cY$ is closed under direct sums, $\oplus_{j=1}^\ell  Y^j$ is in $\cY$ and hence,
\[ 
  \sum M_j (\oplus^\ell Y^j ) M_j^\ast  =  k \sum_j^\ell D_j Y^j D_j^*  =\sum C_j Y^y C_j^* \in\cY(n).
\] 
  Thus, in the first case $\cY$ is a free cone and in the second a free convex cone.

To prove the third statement, note that 
the argument used to prove the first part of the lemma shows,
  if $\cY$ is a tracial set that is closed with respect
 to identical direct sums and if each $C_j=I$, then
  $\ell Y = \sum C_j (Y\otimes I_\ell) C_j^\ast$ is in 
  $\cY(n)$.  If $\cY$ is levelwise convex,
 since also $0\in\cY(n)$, it follows that $\cY(n)$ is a convex cone. 
\end{proof}

\begin{proof}[Proof of Proposition {\rm\ref{prop:surpCone}}]
  Fix positive integers $n$ and $\nu$. Let $Y^1,\dots,Y^\nu \in \cY(n)$ be given.  Let
  $C_\ell$ denote the inclusion of $\mathbb  R^n$ as the $\ell$-th coordinate in
   $\mathbb R^{n\nu} = \bigoplus_{i=1}^\nu \mathbb R^n$.  In particular, $C_\ell^\ast C_\ell = I_n$
  and hence, $Z^\ell = C_\ell Y^\ell C_\ell^\ast \in \cY(n\nu)$ (based only on $\cY$ being a tracial set).
   Now let $V_\ell$ denote the block $\nu\times \nu$ matrix with $n\times n$ entries with $I_n$ in the $(\ell,\ell)$ position
   and zeros ($n\times n$ matrices) elsewhere.  Note that $\sum V_\ell^\ast V_\ell =I_{n\nu}$. Hence, 
\[
 \sum V_\ell Z^\ell V_\ell^* = \mbox{diag}\begin{pmatrix} Y^1 & Y^2 &\dots & Y^\nu \end{pmatrix} \in \cY(n\nu).
\]
 Thus $\cY$ is closed with to identical direct sums.
 By the second part of Lemma \ref{lem:sums-convex-redo}, 
  $\cY$ is a free convex cone.
\end{proof}

 \begin{rem}\rm
 \label{lem:tonowhere}
 If $\cY\subset\smatg$ is a  cone 
 and if $B\in\smatng$ is in the polar dual of the set $\cU$ consisting of tuples
 $\sum C_j Y^j C_j^*$  for $Y^j\in\cY$ and $C_j$ such that $\sum C_j^* C_j \preceq I$, then 
\[
   \sum B_j\otimes Y_j \preceq 0
\]
 for all $Y\in \cY(m)$.   
 In particular,
 the  polar   dual $\cB=\cY^\circ$ of a  cone $\cY$ is a free convex cone.

  To prove this assertion, pick $B\in\smatng$ in the polar dual of $\cU$. Fix a positive integer $m$. 
By Lemma \ref{lem:separate},  there exists a positive semidefinite $T$ with trace at most one such that
\[
 I\otimes T - \sum B_j\otimes Y_j \succeq 0
\]
 for all $Y\in\cY(m)$.  Since $\cY(m)$ is a cone,
  $I \otimes T - \sum   B_j  \otimes t^2  Y_j  \succeq 0$
for all real $t$ and hence
$$
  - \sum B_j \otimes Y_j\succeq 0. 
$$
It follows that
\beq
\label{eq:coneCBC}
  - \sum C^* B_j C  \otimes Y_j\succeq 0
\eeq
for any $C$. 
The conventional polar dual of a set is  convex,
which implies convex combinations with various $C_j$ in \eqref{eq:coneCBC}
are in $\cB$.
 Hence $\cB$ is a free convex cone.  
\end{rem}

\section{Examples}
\label{sec:exs}
  The examples referenced in the body of the paper are gathered together in this section.
  Some of the examples consider the scalar level $\Gamma(1)\subset\R^g $ of a   free set $\Gamma\subset\mbS^g$.

\begin{example}\rm
 \label{ex:fails}
This example  shows that it is not necessarily possible to choose $V$ an isometry in equation \eqref{eq:LMIdominate2} of Theorem \ref{thm:convexPos} if the boundedness assumption on $\cD_{\mL_B}$ is omitted.  Let $g=1$, and consider $\mL_A(x)=1+x$, $\mL_B(x)=1+2x$. In this case,
\[
\cD_{\mL_B}=\Big\{ X: X\succeq-\frac12\Big\} \subseteq \cD_{\mL_A}=
\{X: X\succeq-1\} .
\]
 It is clear that there does not exists a $\mu$ and isometry $V$ such that $A = V^*(I_\mu\otimes B)V$.  This example is in fact representative in the sense that  if 
$\mL_B$ is a monic linear pencil and $\cD_{\mL_B}$  is unbounded, then  there is a monic linear pencil $\mL_A$ with $\cD_{\mL_B}\subseteq\cD_{\mL_A}$ for which there does not exist a $\mu$ and isometry $V$ such that $A = V^*(I_\mu\otimes B)V$.
\end{example}

\begin{example} \label{ex:no tracial extension}
Here is an example of a 
trace preserving cp map $\phi:\cS\to M_2$ with domain  an operator system $\cS$ that 
does not admit an extension to a trace non-increasing 
 cp map $\phi:M_2\to M_2$. This phenomenon contrasts with the
 classical Arveson extension theorem \cite{Arv69} which says that
 any ucp map extends to the full algebra.

Let $\cS=\Span \{I_2,E_{1,2},E_{2,1}\}$, 
\[
V=\bem \sqrt{\frac12} & 0 \\ 0 & \sqrt{\frac32}\eem,
\]
and
consider the cp map $\phi:\cS\to M_2$,
\[
\phi(A)=V^* AV\quad\text{for }A\in\cS.
\]
We have
\[
\phi(I_2)= V^* V =\bem \frac12 & 0 \\ 0 &\frac32\eem, \quad
\phi(E_{1,2})=\frac{\sqrt3}2 E_{1,2}, \quad
\phi(E_{2,1})=\frac{\sqrt3}2 E_{2,1},
\]
so $\phi$ is trace preserving on $\cS$.

Now let us consider a  cp extension (still denoted by $\phi$) 
of $\phi$ to $M_2$.
Letting
\[
\phi(E_{1,1})=\bem a & b \\ b & c\eem,
\]
the Choi matrix for $\phi$ is
\[
C=\bem
a & b & 0 & \frac{\sqrt3}2 \\[.1cm]
b & c & 0 & 0 \\[.1cm]
0 & 0 & \frac12-a & -b\\[.1cm]
 \frac{\sqrt3}2 & 0 & -b & \frac32-c
 \eem\succeq0.
 \]
Supposing $\phi:M_2\to M_2 $ is trace non-increasing, 
\[
\begin{split}
1&=\tr(E_{1,1}) \geq \tr( \phi(E_{1,1})) = a+c \\
1&=\tr(E_{2,2}) \geq \tr( \phi(E_{2,2})) = 2-a-c,
\end{split}
\]
whence $a+c=1$.
Since $C$ is positive semidefinite, the nonnegativity of
the diagonal of $C$ now gives us
\[
0\leq a\leq\frac12.
\]
But then the  $2\times2$ minor 
\[
\bem a & \frac{\sqrt3}2\\[.1cm]
\frac{\sqrt3}2 & \frac12+a
\eem
\]
is not positive semidefinite, a contradiction.
\end{example}

\begin{example}\rm
\label{ex:btv}\label{ex:btvv}\label{ex:btvlift}
Consider 
\[ 
p=1-x_1^2-x_2^4.
\] 
In this case $p$ is 
 symmetric with $p(0)=1>0$.  
 
 \begin{center}
\includegraphics[scale=1.2]{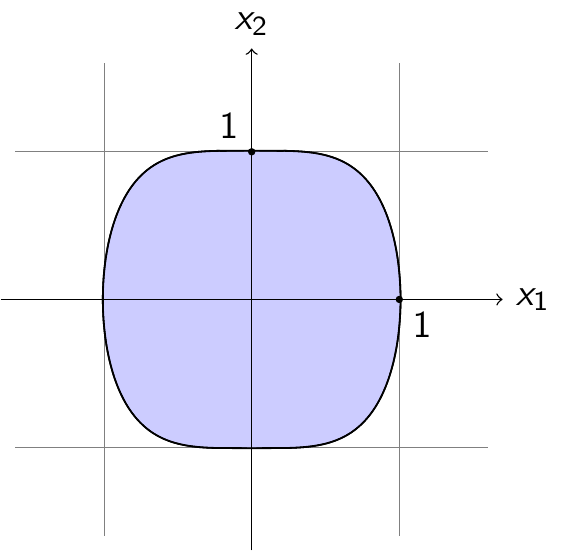}
\end{center}~\centerline{Bent TV screen $\cD_p(1)=\{ (x_1,x_2)\in\R^2 : 1-x_1^2-x_2^4\geq0 \}$ }

The free semialgebraic set $\cD_p$ is called the
real \df{bent free TV screen}, or (bent) TV screen for short.
While $\cD_p(1)$ is convex, 
it is known that $\cD_p$ is not 
matrix convex, see 
\cite{DHM07}
or \cite[Chapter 8]{BPR13}. Indeed, already $\cD_p(2)$ 
is not a convex set.

\begin{center}
\includegraphics[scale=.35]{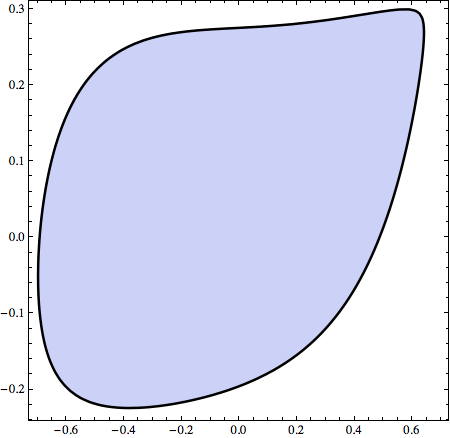}
\end{center}~\centerline{A non-convex 2-dimensional slice of $\cD_p(2)$.}

  That the set $\cD_p(1)$ is a spectrahedral shadow is well known. Indeed, letting
\[
  L(x_1,x_2,y) = \begin{pmatrix} 1  & 0 & x_1 \\ 0 & 1 & y \\ x_1 & y & 1 \end{pmatrix}
    \oplus \begin{pmatrix} 1 & x_2 \\ x_2 & y\end{pmatrix},
\]
 it is readily checked that $\proj_x \cD_{L}(1) = \cD_p(1)$.  Further,
  Lemma \ref{lem:boundedmonicLift} implies that $L$ can 
  be replaced by a monic linear pencil $\mL$. An explicit 
  construction of such an $\mL$ can be found in \cite[\S7.1]{Lasse}.
  We remark that $\proj_x \cD_L$ strictly contains the 
  matrix convex hull of 
  $\cD_p$, cf.~\cite[\S7.1]{Lasse}.
\end{example}

The next example is one in a classical commutative situation. We refer the reader to  \cite{BPR13} for background on classical convex algebraic geometry.

\begin{example}
\label{ex:scalarbentTVpd}
The polar dual of 
 the \bTV{}
$\cD_p=\{(X,Y): 1-X^2-Y^4\succeq0\}$. 
We note that $\cD_p^\circ(1)$  
 coincides with the classical polar dual of $\cD_p(1)$ by  Proposition
\ref{prop:poDual}, cf.~\cite[Example 4.7]{Lasse}.

We first find the boundary $\pt\cD_p^\circ(1)$ using
 Lagrange multipliers.  Consider a linear function $1-(c_1 x+ c_2 y)$
 that is nonnegative but not strictly positive on $\cD_p^\circ(1)$
and its values on (the boundary of) $\cD_p(1)$. The  Karush–Kuhn–Tucker (KKT)  conditions for first order optimality give 
\[
1-x^2-y^4=0 ,\quad c_1=2\la x, \quad c_2 = 4\la y^3, \quad 1=c_1x+c_2y.
\]
Eliminating $x,y,\la$ leads to the following formula relating $c_1,c_2$:
\[
q(c_1,c_2):=-16 c_1^8+48  c_1^6-48 c_1^4-8 c_1^4
   c_2^4+16 c_1^2-20  c_1^2 c_2^4
   -c_2^8+ c_2^4=0.
   \]
Thus
the boundary of $\cD_p^\circ(1)$ is contained in the  zero set of
$q$. 
Since $q$ is irreducible,
 $\pt \cD_p^\circ(1)$  in fact equals the zero set of $q$.
In particular, $\cD_p^\circ(1)=\{(x,y)\in\R^2: q(x,y)\geq0
\}$ is not a
spectrahedron, since it fails the line test in \cite{HV07}.


\begin{center}
\includegraphics{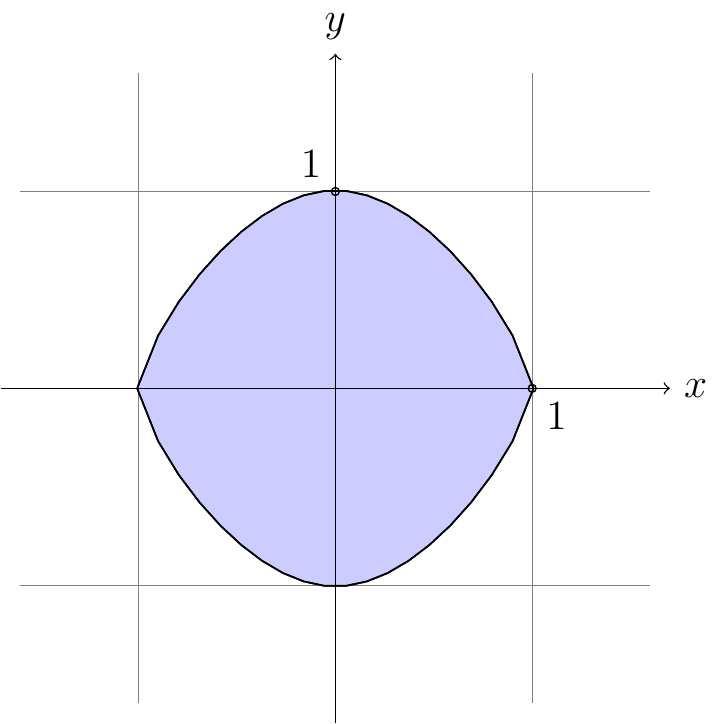}
\end{center}
\centerline{Polar dual $\cD_p^\circ(1)$ of the bent TV screen.}
\vspace{-.6cm}
\end{example}

\begin{example}\rm
  Recall the free bent TV screen is the nonnegativity set $\cD_p$ for the polynomial
 $p=1-x^2-y^4$ (see Example \ref{ex:btv}).  Let $\calK$ denote the closed matrix convex hull of $\cD_p$.  
Then $\calK(1)=\cD_p(1)$ and hence, by 
  Proposition \ref{prop:poDual} and Example \ref{ex:scalarbentTVpd},
  $\cD_p^\circ(1) = \cD_p(1)^\circ$ is not a spectrahedron. Hence,
  $\cD_p^\circ$ is not a free spectrahedron. In particular,
$\calK$ 
cannot be represented by a single $\Omega$ as
  in Theorem \ref{thm:polarLMI}.
\end{example}

\begin{example}
\label{ex:nocon}
\rm
 Tracial and contractively tracial hulls need not be convex (levelwise) as this example shows. 
 Consider
\[
 A= \begin{pmatrix} 1 & 0 \\ 0  & 0\end{pmatrix}, \ \ \ B=\begin{pmatrix} 0 & 0 \\ 0 & -1 \end{pmatrix}.
\]
 To show that $D=\frac12(A+B)$ is not in $\thull(\{A,B\})$, suppose there exists
 $2\times 2$ matrices $V_1,\dots,V_m$ such that $\sum V_j^* V_j =I$ and 
\[
 \sum V_j A V_j^* = D.
\]
 On the one hand, the trace of $D$ is zero, on the other hand, $\sum V_jA V_j^*$ has trace $1$. Hence $D$ is not in the tracial hull of $A$.
 A similar argument shows that $D$ is not in the tracial hull of $B$. Hence by Lemma \ref{lem:hullofS},
 $D\not\in\thull(\{A,B\})$. 

 Now consider the tuples $A=(A_1,A_2)$ and $B=(B_1,B_2)$ defined by,
\[
  A_1 = \begin{pmatrix} 1 & 0\\0 & 0\end{pmatrix}=-B_2, \ \ \  A_2=\begin{pmatrix} 0 & 0\\ 0 & 1 \end{pmatrix} = -B_1.
\]
 In this case $D=\frac12(A+B)$ is,
\[
 D=(D_1,D_2) = \frac12 \begin{pmatrix} \begin{pmatrix} 1&0\\0 & -1\end{pmatrix}, \begin{pmatrix} -1&0\\0&1\end{pmatrix} \end{pmatrix}. 
\]
 Suppose $\sum C_j^* C_j\preceq I.$ Let
\[
 F_k = \sum C_j A_k C_j^*
\]
 and note  $\tr(F_k)\ge 0.$ On the other hand, $\tr(D_k)=0$.  Hence, if $F_k=D_k$, then 
\[
 0 = \tr(F_k) = \sum_j \tr(C_j A_k C_j^*)\ge 0.
\]
 But then, for each $j$,
\[
 0  = \tr\big((C_j (A_1+A_2)C_j^*\big) =\tr(C_jC_j^*).
\]
 It follows that $C_j=0$ for each $j$ and thus $F_k=0$, a contradiction. Thus, $D$ is not in the contractive tracial hull of $A$ and by symmetry
 it is not in the contractive tracial hull of $B$.  
By Lemma \ref{lem:cthullofS}, $D$ is not in the contractive tracial hull generated
 by $\{A,B\}$. 
\end{example}

The following example shows a contractively stable set need not be convex.
 \begin{example}
\label{ex:constablenotconvex}
Consider the $2\times 2$ matrices $A,B$ from Example \ref{ex:nocon}.
The smallest contractively stable set containing $A,B$ is
the levelwise closed set
\[
\cY=\{ \sum C_j^*AC_j : \sum C_j^*C_j\preceq I \}
\cup
\{ \sum D_j^*BD_j : \sum D_j^*D_j\preceq I \}.
\]
Each matrix in $\cY$ is either positive semidefinite or negative
semidefinite, so $\frac12(A+B)\not\in\cY$.
 \end{example}

\linespread{1.0}
\small

\linespread{1.15}

\newpage

\printindex

\newpage
\normalsize

\tableofcontents

\end{document}